\tikzstyle arrowstyle=[scale=1]
\tikzstyle directed=[postaction={decorate,decoration={markings,
    mark=at position .65 with {\arrow[arrowstyle]{stealth}}}}]
\tikzstyle reverse directed=[postaction={decorate,decoration={markings,
    mark=at position .65 with {\arrowreversed[arrowstyle]{stealth};}}}]
\newtheorem{theorem}{Theorem}                                                                                %
\newtheorem{lemma}{Lemma}                                                                                     %
\newtheorem{proposition}{Proposition}                                                                        %
\newtheorem{definition}{Definition}                                                                               %
\newtheorem{remark}[theorem]{Remark}                                                                      %
\renewcommand\epsilon\varepsilon 
\newcommand{\p}{\partial}
\newcommand\R{\mathbb R} 
\newcommand\na{\nabla}
\newcommand\dist{\text{dist}}
\renewcommand\det{\text{det}}
\renewcommand\phi{\varphi}
\newcommand\U{\mathscr U}
\newcommand\V{\mathscr V}
\renewcommand\S{\mathbb S}
\newcommand\id{\text{Id}}
\newcommand\A{\mathpzc D}
\newcommand\Ov{\mathcal O}
\numberwithin{equation}{section} 
\DeclareFontFamily{OT1}{pzc}{}                                                          %
\DeclareFontShape{OT1}{pzc}{m}{it}                                                   %
             {<-> s * [1.250] pzcmi7t}{}                                                       %
\DeclareMathAlphabet{\mathpzc}{OT1}{pzc}                                       %
                                 {m}{it}                                                                  %
\renewcommand{\H}[1]{\bf H{#1}^\circ}
\thanks{AMS Classifications: Primary 35J96, Secondary 78A05, 78A46, 78A50.\\ 
Keywords: Monge-Amp\`ere type equations, local regularity, MTW condition, far field refractor, antenna design.}
\begin{document}
\title{Refractor surfaces determined by near-field data}

\author{Aram L. Karakhanyan}
\address{Aram Karakhanyan:
School of Mathematics, The University of Edinburgh,
Peter Tait Guthrie Road, EH9 3FD Edinburgh, UK}
\email{aram6k@gmail.com}

\author{Ahmad Sabra}
\address{Ahmad Sabra:
American University of Beirut, P.O.Box 11-0236 /
Riad El-Solh / Beirut 1107 2020; and University of Warsaw Krakowskie Przedmie\'scie 26/28. 00-927 Warsaw. Poland}
\email{asabra@aub.edu.lb}

\maketitle


\begin{abstract}
 In this paper we study the near-field refractor problem with point source at the origin 
 and prescribed target on the given receiver surface $\Sigma$. This nonvariational  problem 
 can be studied in the framework of prescribed Jacobian equations. We construct the corresponding generating function and show that the Aleksandrov and the Brenier type solutions are equivalent. 
Our main result establishes local smoothness of  Aleksandrov's solutions when the data is smooth and when the medium containing the source has smaller refractive index than the medium containing the target. This is done by deriving  the Monge-Amp\`ere type equation that smooth solutions satisfy and establishing the validity of  the MTW condition
 for a large class of receiver surfaces,  which in turn implies the local $C^2 $ regularity of the 
 refactor.  
\end{abstract}
{\small{ \tableofcontents}}

\section{Introduction}

We consider the following setting. $\S^n$ is the unit sphere in $\R^{n+1}, n\ge 2$, and $\U$ an open subset of the upper hemisphere $\S^n_+$. Let $\Sigma$ be a regular hypersurface in $\R^{n+1}$ given implicitly by a function $\psi$ and $\V$ an open set in $\Sigma.$
We are given two smooth positive function $f\in L^1(\mathcal U)$ and $g\in L^1(\mathcal V)$ such that
\begin{equation}\label{eq:energy conservation}
\int_{\U} f\, d\sigma(\S^n)=\int_{\V} g\, d\sigma(\Sigma).
\end{equation}

Assume $\Gamma$ is a refractive surface parametrized radially over $\U$, $\Gamma=\{\rho(x)X: X=(x,x_{n+1})\in \U\},$ and separating two homogeneous media with respective refractive indices $n_1$, $n_2$. In medium $n_1$, rays are emitted from the origin $O$ with direction $X\in\U$, and are refracted into medium $n_2$ with the unit direction $Y$ according to the Snell's law given in \eqref{eq:Snell}, see Figure \ref{fig1}. The refracted ray strikes $\Sigma$ at the point $Z(x)$. We say that $\Gamma$ solves the near field refractor problem if $Z(x)\in \V$ for every $(x,x_{n+1})\in \U$, and
\begin{equation}\label{en-loc-1}
\int_E f(X)\, d\sigma(\S^n_+)=\int_{\mathcal R(E)} g(Z)\, d\sigma(\Sigma),\ \mbox{for the Borel subsets}\ E\subset \U
\end{equation}
with $\mathcal R$ the imaging map corresponding to $\Sigma$, defined as follows:  
$$Q\in \mathcal R(E)\iff Q=Z(x) \text{ for some } (x,x_{n+1})\in E.$$

Let $Y(x)$ be the unit direction of the refracted ray 
corresponding to $X=(x, x_{n+1})$.
We can write $Z(x)=\rho(x)X+tY$ where $t(x)=|Z(x)-\rho(x)X|$ is the distance that the 
refracted ray travels before striking $\Sigma$ at $Z(x)$. All over the paper, we assume the following geometric conditions on the solution $\rho$, the source $\U$ and the target $\Sigma$:
\begin{itemize}
\item[$\H1$] $y_{n+1}>0$. This condition requires the refractor to be below the target. 
\item[$\H2$] $\psi^{n+1}>0$, with $\psi^{n+1}:=\dfrac{\partial \psi}{\partial z_{n+1}}.$
\item[$\H3$] $\na \psi(Z(x))\cdot Y(x)>0$, i.e. the target is visible  in  the direction $Y$.
%
%
\item[$\H4$] $\dist (\U, \V)>0$.
\end{itemize}

For every $X=(x,x_{n+1})\in \U$, we write $Z(x)=(z_1(x),\cdots,z_{n}(x),z_{n+1}(x)):=(z(x),z_{n+1}(x))$. Notice that if $\Gamma$ solves the near field refractor problem then
$$ \dfrac{f}{g}=\dfrac{d\sigma(\Sigma)}{d\sigma(\S^n_+)},$$
where $d\sigma(\Sigma)=\dfrac{\left|\nabla \psi\right|}{\psi^{n+1}} |\det z(x)|dz,$ and $d\sigma(\S^n_+)=\dfrac{dx}{\sqrt{1-|x|^2}}=\dfrac{dx}{x_{n+1}}$. Consequently  
\begin{equation}\label{compact}
|\det Dz(x)|=\dfrac{f(X)\,\psi^{n+1}(Z(x))}{|\nabla \psi(Z(x))| g(Z(x)) x_{n+1}}.
\end{equation}

The existence of a Brenier solution to the far field refractor problem is studied in \cite{Gut-Hua}, i.e. in this case $\Gamma$ is such that for every Borel set $F\in \Sigma$ 
$$\int_{\mathcal R^{-1}(F)} f=\int_F g.$$
Solutions are constructed so that they are supported at every point by a cartesian oval. 

In order to study the regularity of weak solutions one has to rewrite the equation $|\det Dz|=\frac fg\frac{|\psi^{n+1}|}{|\na \psi|}$ 
as a Monge-Amp\`ere type equation $\det (D^2\rho +H(x, \rho, \na \rho))=h(x, \rho, \na \rho)$ with some matrix $H:=H(x,v,p), x\in \R^n, v\in \R, p\in \R^n$ and function $h$.  Note that the matrix
$D^2\rho +H(x, \rho, \na \rho)$ is positive definite thanks to the existence of touching ovaloids from below if $\kappa<1$.   
Moreover,  the matrix $H$
is to verify the Ma-Trudinger-Wang condition
 \cite{MTW} 
\begin{equation}\label{def-MTW}
\frac{\partial^2 H_{ij}}{\p p_k\p p_l}\xi_i\xi_j\eta_k\eta_l\leq -c_0|\xi|^2|\eta|^2, \quad \eta, \xi\in \R^n, \xi\perp\eta, 
\end{equation}
with some positive constant $c_0>0$. This condition alone does not immediately imply that the generalized solutions are locally  smooth, and some further structural conditions must be imposed. 



The paper is organized as follows. In Section \ref{sec:2}, we use the Snell's law and the stretch function $t(x)=\left|Z(x)-\rho(x)X\right|$ to write \eqref{compact} into an equation of Monge-Amp\`ere type and prove the following theorem:

\begin{theorem}\label{thm-1}
Let $X=(x, x_{n+1}), x_{n+1}=\sqrt{1-|x|^2}, |x|<1$ be  the canonical parametrization of the upper hemisphere  and regard
 $\rho $  as a $C^2$ function of $x$. 
 If $\Gamma=\{X\in\R^{n+1}\,  :\, \rho(x)X, \, X\in\U\}$ is a solution to the near field refractor problem satisfying conditions $\H1-\H4$ then $\rho$ solves the following
 \begin{equation}\label{eq-intro-h}
\det (\mathcal M)\, \det
 \left(
 D^2\rho
 + \dfrac{1}{bt}\mathcal A+\dfrac{1}{b}\mathcal B
 \right)
 = 
\frac {f}{g}\frac{y_{n+1}}{x_{n+1}} \dfrac{\nabla \psi\cdot Y}{|\nabla \psi|}\frac1{t^n}, 
 \end{equation}
where the matrices $\mathcal M, \mathcal A, \mathcal B$ are given respectively in  \eqref{eq:Formula for M}, \eqref{eq:mathcal A}, \eqref{eq:mathcal B}. Moreover,  the matrix $\mathcal M$ is invertible in some small  neighborhood of $x=0$ .
Here $b(x,\rho,\nabla \rho)=\dfrac{\kappa \rho-\sqrt{\rho^2+(1-\kappa^2)(|\nabla \rho|^2-(\nabla \rho\cdot x)^2)}}{\rho^2+|\nabla \rho|^2-(\nabla \rho\cdot x)^2}$, and  $\kappa=\dfrac{n_1}{n_2}$.
\end{theorem}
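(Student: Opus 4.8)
The plan is to make every object attached to the refracted ray explicit as a function of $(x,\rho,\na\rho)$, then differentiate the position map $x\mapsto Z(x)$ and locate $D^2\rho$ inside the resulting Jacobian. First I would use Snell's law at $\Ga$: since the tangential components of $n_1X$ and $n_2Y$ agree, $n_1X-n_2Y$ is parallel to the unit normal $\nu=\nu(x,\rho,\na\rho)$ of the radial graph $r=\rho(x)$, so $\kappa X-Y=\Lambda\nu$ for a scalar $\Lambda$. Computing $\nu$ from the parametrization and imposing $|Y|=1$ produces a quadratic for $\Lambda$ whose admissible root --- selected by $\H1$ together with $\kappa<1$ --- is exactly the coefficient $b=b(x,\rho,\na\rho)$ appearing in the statement; this yields an explicit smooth map $Y=Y(x,\rho,\na\rho)$ in which $\na\rho$ enters only through $b$ and through the tangential part of $\nu$.

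Next, since $Z(x)=\rho(x)X+t(x)Y(x)\in\Sigma=\{\psi=0\}$, the scalar equation $\psi(\rho X+tY)=0$ together with $\H2$--$\H3$ and the implicit function theorem defines $t=t(x,\rho,\na\rho)$ as a smooth function, positive by $\H4$, with $\p_jt=-\big(\na\psi\cdot\p_j(\rho X+tY)\big)/(\na\psi\cdot Y)$ --- an expression in which $D^2\rho$ enters only through $\p_jY$. Then I would differentiate the position map,
\[
\p_jZ=\p_j\rho\,X+\rho\,\p_jX+\p_jt\,Y+t\,\p_jY,\qquad
\p_jY=\frac{\p Y}{\p\rho}\,\p_j\rho+\frac{\p Y}{\p p_k}\,\p^2_{jk}\rho+\frac{\p Y}{\p x_j},
\]
substitute the formula for $\p_jt$, and collect the terms carrying a second derivative of $\rho$. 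Using $\p Y/\p p_k$ from the first step these should assemble into $t\,\mathcal C\big(D^2\rho+\tfrac1{bt}\mathcal A+\tfrac1b\mathcal B\big)$ for an explicit matrix $\mathcal C$, the lower-order terms being absorbed into $\mathcal A$ and $\mathcal B$. Finally, passing to the first $n$ components and using $\p_jz_{n+1}=-\tfrac1{\psi^{n+1}}\sum_k\psi^k\,\p_jz_k$ (legitimate by $\H2$) to remove the $(n+1)$st row, I expect a factorization $Dz=\mathcal M\cdot t\big(D^2\rho+\tfrac1{bt}\mathcal A+\tfrac1b\mathcal B\big)$ in which $\mathcal M$ collects only first-order/geometric data: $\p_jX$, the projection onto $T_Z\Sigma$, and the conversion factors $\psi^{n+1}$, $|\na\psi|$, $\na\psi\cdot Y$, $y_{n+1}$. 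Taking determinants and inserting the Jacobian identity \eqref{compact} then yields \eqref{eq-intro-h}. Invertibility of $\mathcal M$ near $x=0$ I would get by evaluating there: the hemisphere parametrization is first-order flat at $x=0$, so $\mathcal M$ reduces to a nonsingular matrix (a multiple of the identity times the target-geometry factors, nonzero by $\H2$--$\H4$), and continuity propagates this to a neighborhood.

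The hard part will be the bookkeeping in the differentiation step: $Y$ is built from the unit normal of a \emph{radial} graph, so $\p Y/\p p_k$ is itself a nontrivial matrix, and one must carry the constraint $x_{n+1}=\sqrt{1-|x|^2}$ throughout; the crux is to check that \emph{every} occurrence of $D^2\rho$ --- both the direct ones from $\p_jY$ and the indirect ones entering through $\p_jt$ --- recombines into the single correction $D^2\rho+\tfrac1{bt}\mathcal A+\tfrac1b\mathcal B$, leaving a prefactor $\mathcal M$ that depends on first-order data only. The positive definiteness of this corrected Hessian, needed to read \eqref{eq-intro-h} as a genuine Monge--Amp\`ere equation, is precisely the existence of supporting Cartesian ovals from below when $\kappa<1$, as recalled in the introduction.
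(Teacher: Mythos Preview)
Your strategy is the paper's --- compute $Y$ via Snell, $t$ via $\psi(\rho X+tY)=0$, differentiate $Z$, isolate $D^2\rho$, take determinants --- but your picture of the factorization and of $\mathcal M$ is off in a way that would derail the bookkeeping. The paper separates \emph{two} matrix factors. Writing $Dz=\rho\,\id+x\otimes\nabla\rho+y\otimes\nabla t+t\,Dy$ and substituting the formula for $\nabla t$ (which itself contains a $(Dy)^t$ term) gives $Dz=\mu_2(\mu_0+t\,Dy)$ with $\mu_2=\id-\frac{1}{\nabla\psi\cdot Y}\,y\otimes\big(\widehat\nabla\psi-\frac{\psi^{n+1}}{y_{n+1}}y\big)$; this $\mu_2$ is the \emph{only} carrier of target data, and its determinant $\psi^{n+1}/\big(y_{n+1}\,\nabla\psi\cdot Y\big)$ is what supplies the geometric factors on the right of \eqref{eq-intro-h}. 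Separately one expands $Dy=(\text{Hessian-free})+\mathcal M\,D^2\rho$, and the paper's $\mathcal M$ in \eqref{eq:Formula for M} is precisely this coefficient of $D^2\rho$ inside $Dy$: it depends only on $x,\rho,\nabla\rho$ through $b$ and $\beta=b^2/q$, with \emph{no} $\psi$ in it. One then sets $\mathcal A=b\,\mathcal M^{-1}\mu_0$ and $\mathcal B=b\,\mathcal M^{-1}(\text{Hessian-free part of }Dy)$.

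Consequently your claim that $\mathcal M$ ``collects \dots the conversion factors $\psi^{n+1}$, $|\nabla\psi|$, $\nabla\psi\cdot Y$, $y_{n+1}$'' is wrong, and so is your invertibility argument: $\H2$--$\H4$ concern $\psi$ and say nothing about $\mathcal M$. At $x=0$ one has $\mathcal M=b\,\id+\beta\,\nabla\rho\otimes\nabla\rho$, which is not a multiple of the identity; the paper checks $\det\mathcal M=\big(1+\tfrac{b}{q}[\,|\nabla\rho|^2-(\nabla\rho\cdot x)(\rho+\nabla\rho\cdot x)\,]\big)b^n x_{n+1}\neq 0$ by a direct Sherman--Morrison computation using $b=(\kappa^2-1)/(\kappa\rho+q)$, treating $\kappa>1$ and $\kappa<1$ separately. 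Note also that Theorem~\ref{thm-1} is stated for arbitrary $\kappa\neq 1$, so the root selection for $b$ should not be tied to $\kappa<1$; and the remark about positive definiteness of the corrected Hessian via supporting ovals is not part of this theorem's proof.
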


We mention that using a different stretch function, a Monge Amp\`ere type PDE for solutions to the near field problem is derived in \cite[Appendix]{Gut-Hua}, however the equation is not simplified and the invertibility of the matrices involved in the equation requires further checking.

In section \ref{sec:Simplifying}, we simplify further the matrices $\mathcal A$ and $\mathcal B$, and then calculate \eqref{eq-intro-h} at $x=0$ yielding to the following Theorem.

\begin{theorem}\label{thm:MA at 0}
At $x=0$, we have
\begin{align}  \label{eq:A at 0}
\mathcal A(0,\rho,\nabla\rho)&=\rho Id -\dfrac{\kappa^2-1}{\rho} \nabla \rho\otimes\nabla \rho, \\ \label{eq:B at 0}
\mathcal B(0,\rho,\nabla \rho)&=(\kappa-b\rho) Id -\dfrac{2b}{\rho}\nabla \rho\otimes\nabla \rho.
\end{align}

Therefore \eqref{eq-intro-h} becomes at $x=0$
\begin{equation}\label{eq:PDE at 0}
\det\left(D^2\rho+\dfrac{1}{b t} \left(\rho Id -\dfrac{\kappa^2-1}{\rho} \nabla \rho\otimes\nabla \rho\right)+\dfrac{\kappa-b \rho}{b} Id -\dfrac{2}{\rho}\nabla \rho\otimes\nabla \rho\right)=
\dfrac{f}{g\,\det (\mathcal M)}\dfrac{\nabla \psi\cdot Y}{\nabla \psi}\dfrac{1}{t^n}y_{n+1}.
\end{equation}
\end{theorem}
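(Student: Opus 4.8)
My plan is to evaluate the matrices $\mathcal A$ and $\mathcal B$ of \eqref{eq:mathcal A}--\eqref{eq:mathcal B} at the north pole $x=0$ of the hemisphere, where the canonical parametrization $X(x)=(x,\sqrt{1-|x|^2})$ degenerates favourably, and then substitute the resulting expressions into the Monge--Amp\`ere type equation \eqref{eq-intro-h}.

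First I would record the behaviour of the parametrization at $x=0$: there $x_{n+1}=1$, $\nabla x_{n+1}=0$, and $\partial_i\partial_j x_{n+1}=-\delta_{ij}$; moreover $\nabla\rho\cdot x=0$. Consequently the scalar $b$ of Theorem \ref{thm-1} reduces to $b(0,\rho,\nabla\rho)=\dfrac{\kappa\rho-\sqrt{\rho^2+(1-\kappa^2)|\nabla\rho|^2}}{\rho^2+|\nabla\rho|^2}$, and every term in $\mathcal A$ and $\mathcal B$ carrying an explicit factor $x$ or $\nabla x_{n+1}$ is annihilated. The only surviving contributions of the parametrization beyond $x_{n+1}=1$ come through the Hessian $\partial_i\partial_j x_{n+1}=-\delta_{ij}$, which feeds an isotropic piece into each matrix; this is the origin of the pure identity terms $\rho\,Id$ in \eqref{eq:A at 0} and $(\kappa-b\rho)Id$ in \eqref{eq:B at 0}.

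Next I would evaluate the two matrices separately. Writing each of $\mathcal A$ and $\mathcal B$ as (isotropic part) $+$ (rank-one part along $\nabla\rho$) $+$ ($x$-dependent remainder) and using the facts above, the remainder drops out, the isotropic coefficients simplify to $\rho$ and $\kappa-b\rho$ respectively, and the rank-one coefficients simplify — using $|X|^2=1$, the identity $1-\kappa^2=-(\kappa^2-1)$, and the formula for $b$ — to $-\frac{\kappa^2-1}{\rho}$ for $\mathcal A$ and $-\frac{2b}{\rho}$ for $\mathcal B$. This yields \eqref{eq:A at 0} and \eqref{eq:B at 0}. Finally, inserting these into \eqref{eq-intro-h}, replacing $x_{n+1}$ by $1$, and noting that the prefactor $1/b$ multiplying $\mathcal B$ turns $(\kappa-b\rho)Id$ into $\frac{\kappa-b\rho}{b}Id$ and $-\frac{2b}{\rho}\nabla\rho\otimes\nabla\rho$ into $-\frac{2}{\rho}\nabla\rho\otimes\nabla\rho$, produces \eqref{eq:PDE at 0}; the invertibility of $\mathcal M$ near $x=0$ from Theorem \ref{thm-1} legitimizes the division by $\det(\mathcal M)$.

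The main obstacle is purely computational bookkeeping: the expressions in \eqref{eq:mathcal A}--\eqref{eq:mathcal B} are assembled from the Jacobian of $Z(x)=\rho(x)X+t(x)Y$, so they involve several tensor products ($\nabla\rho\otimes\nabla\rho$, $\nabla\rho\otimes x$, $x\otimes x$) together with Hessian terms, and one must check that at $x=0$ exactly the claimed combinations survive with exactly the claimed scalar coefficients and no hidden cross terms. I would handle this by evaluating the three structural pieces of each matrix independently and cross-checking the $1/\rho$ and $1-\kappa^2$ factors against the explicit form of $b$; no new idea beyond Theorem \ref{thm-1} and the simplifications of Section \ref{sec:Simplifying} is required.
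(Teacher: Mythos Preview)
Your overall strategy---evaluate the simplified expressions of Section~\ref{sec:Simplifying} for $\mathcal A$ and $\mathcal B$ at $x=0$, observe that every tensor term carrying a factor of $x$ drops out, and read off the surviving coefficients---is exactly the route the paper takes. Two points, however, need correction.

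First, your explanation of where the identity parts come from is wrong. The matrices $\mathcal A$ and $\mathcal B$ in \eqref{eq:mathcal A}--\eqref{eq:mathcal B} are functions of $(x,\rho,\nabla\rho)$ only; no second derivatives of $x_{n+1}$ (or of anything else) enter their definitions. Indeed, Lemmas~\ref{lm:simplified mathcal A} and~\ref{lm:simplified B} already display the isotropic pieces $\rho\,\id$ and $\sigma\,\id=(\kappa-b\rho)\,\id$ for \emph{every} $x$, as the leading terms of $\mu_0$ and of the bracket in \eqref{eq:mathcal B} respectively, multiplied by $b\mathcal M^{-1}$. The Hessian $\partial_i\partial_j x_{n+1}=-\delta_{ij}$ plays no role here; drop that sentence.

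Second, and more importantly, you gloss over the one place where genuine work is required. At $x=0$ Lemma~\ref{lm:simplified B} gives
\[
\mathcal B(0,\rho,\nabla\rho)=\sigma\,\id+(\alpha-QF)\,\nabla\rho\otimes\nabla\rho,
\]
so obtaining \eqref{eq:B at 0} amounts to the identity $\alpha-QF=-\dfrac{2b}{\rho}$ at $x=0$. This is \emph{not} a consequence of ``$|X|^2=1$, $1-\kappa^2=-(\kappa^2-1)$, and the formula for $b$'' alone: one must substitute the values $\sigma=\kappa-b\rho$, $F=\sigma+\alpha|\nabla\rho|^2$, $Q=\dfrac{\kappa^2-1}{\rho(\kappa q+\rho)}$, $\alpha=-\dfrac{b^2}{\kappa^2-1}\dfrac{\kappa q+\rho}{q}$, $b=\dfrac{\kappa^2-1}{\kappa\rho+q}$, and use the relation $q^2=\rho^2+(1-\kappa^2)|\nabla\rho|^2$ to collapse the resulting expression. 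This chain of substitutions is the heart of the proof in Section~\ref{x=0}; without it you have only asserted the conclusion. (The coefficient $-\dfrac{\kappa^2-1}{\rho}$ for $\mathcal A$ is, by contrast, immediate from Lemma~\ref{lm:simplified mathcal A} once $x=0$.)
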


We stress the fact that Theorems \ref{thm-1}, and \ref{thm:MA at 0} are valid independently of whether $n_1$ is smaller or larger than $n_2$.

Writing  \eqref{eq:PDE at 0} of the form $\det (D^2\rho+I+II)=h$, we calculate in Section \ref{sec:MTW}, the MTW condition and show that for orthogonal vectors $\xi$, $\eta$
\begin{eqnarray}\label{MTW-0}
\sum_{i,j,l,m}\left[\frac{\partial^2 \left( I_{ij}+II_{ij}\right)}{\partial p_l\partial p_m}\right] \xi_i\xi_j\eta_l\eta_m
&=&
\sum_{l,m}
H_{lm}\eta_l\eta_m.
\end{eqnarray}
with $H$ given in \eqref{eq:form of H_{lm}}. Studying the sign of $H$ we conclude in Section \ref{sec:MTW} the following Theorem.

\begin{theorem}\label{thm-2}
The MTW-condition $\sum_{lm} H_{lm} \eta_l\eta_m<0,$  is satisfied at $x=0$, provided that 
$\kappa<1$, hypotheses $\H1-\H4$ hold, and $\psi$ is concave in $Y$ direction, i.e.
 $\Sigma$ is a concave graph in $Y$ direction near  $Z$, see Figure \ref{fig1}. 

\end{theorem}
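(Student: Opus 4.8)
The plan is to take the matrix $H$ produced by the reduction \eqref{MTW-0} --- its entries $H_{lm}$ are displayed in \eqref{eq:form of H_{lm}} --- and to prove that the quadratic form $\sum_{lm}H_{lm}\eta_l\eta_m$ is negative; by \eqref{MTW-0} this is exactly the MTW inequality \eqref{def-MTW} evaluated at $x=0$. To obtain $H$ one writes \eqref{eq:PDE at 0} as $\det(D^2\rho+I+II)=h$ with
\[
I=\frac{1}{b\,t}\,\mathcal A(0,\rho,\na\rho),\qquad II=\frac1b\,\mathcal B(0,\rho,\na\rho),
\]
where $\mathcal A(0,\cdot)$, $\mathcal B(0,\cdot)$ are the explicit matrices of \eqref{eq:A at 0}--\eqref{eq:B at 0} and, at $x=0$, $b=b(0,\rho,\na\rho)$ is a smooth explicit function of $\rho$ and $|\na\rho|^2$; then one differentiates the coefficients of $\id$ and of $\na\rho\otimes\na\rho$ twice in $p=\na\rho$ and contracts as in \eqref{MTW-0}. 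Everything in $I+II$ is elementary in $(\rho,\na\rho)$ except the scalar $t$, and within $I+II$ the factor $t$ enters only through $1/t$ multiplying $\frac1b\mathcal A(0,\rho,\na\rho)$; consequently the receiver $\Sigma$ affects $H$ only via $t$ and its first two $p$-derivatives, and the Hessian $D^2\psi$ only via the second $p$-derivative of $t$.

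First I would compute these $p$-derivatives of the stretch. By Snell's law \eqref{eq:Snell} one has at $x=0$ the explicit vector $Y=(b\,\na\rho,\ \kappa-b\rho)$, so $y_{n+1}=\kappa-b\rho$, positive by $\H1$ --- indeed automatically so when $\kappa<1$, as then $b<0$, the sign of $b$ also being what produces the ovaloids touching $\Gamma$ from below and the positivity of $D^2\rho+I+II$. Since $Z=\rho\,e_{n+1}+tY$ lies on $\Sigma=\{\psi=0\}$, the stretch $t=t(\rho,\na\rho)$ solves $\psi(\rho\,e_{n+1}+tY)=0$, and differentiating in $p_k$ gives
\[
t_{p_k}=-\,t\,\frac{\na\psi(Z)\cdot Y_{p_k}}{\na\psi(Z)\cdot Y},
\]
well defined by $\H3$. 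A second differentiation yields $t_{p_kp_l}$ as a combination in which $D^2\psi(Z)$ occurs only through $V_k^{\top}D^2\psi(Z)\,V_l$, where $V_k:=t_{p_k}Y+t\,Y_{p_k}$ satisfies $\na\psi(Z)\cdot V_k=0$, i.e.\ $V_k\in T_Z\Sigma$; the remaining terms of $t_{p_kp_l}$ involve only $\na\psi(Z)$ together with the explicit vectors $Y_{p_k}$, $Y_{p_kp_l}$, which are elementary since $b$ is.

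Substituting this into \eqref{eq:form of H_{lm}} yields a decomposition $\sum_{lm}H_{lm}\eta_l\eta_m=\mathcal Q_\psi(\eta)+\mathcal Q_0(\eta)$, where $\mathcal Q_\psi$ is the only group of terms carrying $D^2\psi(Z)$ and takes the form $c\cdot V(\eta)^{\top}D^2\psi(Z)\,V(\eta)$ with $V(\eta)=\sum_k\eta_kV_k\in T_Z\Sigma$; the constant $c$ is built from $\rho,\na\rho,\kappa,t$ (through $1/(b\,t^2)$ and a factor coming from $\mathcal A(0,\rho,\na\rho)=\rho\,\id+\tfrac{1-\kappa^2}{\rho}\,\na\rho\otimes\na\rho$, which is positive definite when $\kappa<1$) and from $1/\bigl(\na\psi(Z)\cdot Y\bigr)$. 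Since $\kappa<1$ forces $b<0$ and $\H3$ gives $\na\psi(Z)\cdot Y>0$, the constant $c$ is negative, and therefore the hypothesis that $\psi$ be concave in the $Y$ direction --- i.e.\ that $\Sigma$ be a concave graph in the $Y$ direction near $Z$, equivalently that $D^2\psi(Z)$ be positive semidefinite on $T_Z\Sigma$, see Figure~\ref{fig1} --- is exactly what makes $\mathcal Q_\psi(\eta)\le0$. The remaining part $\mathcal Q_0(\eta)$ depends only on $\rho(0),\na\rho(0),\kappa,t(0)$ and the first-order data $\na\psi(Z)$, and one checks that it is negative definite in $\eta$ using $\kappa<1$ (so $b<0$, $\kappa-b\rho>0$, and the relevant coefficients of $\mathcal A(0,\cdot)$ and $\mathcal B(0,\cdot)$ carry the right sign) together with $\H1$, $\H2$, $\H4$ (so $y_{n+1}>0$, $\psi^{n+1}>0$, and $t(0)$ is bounded away from $0$ and $\infty$). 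Adding the two parts gives $\sum_{lm}H_{lm}\eta_l\eta_m<0$ for all $\eta\ne0$, which is the assertion.

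The main obstacle is the second $p$-derivative of the stretch: because $Y$ depends on $p$ both through the incident ray and through Snell's law, $t_{p_kp_l}$ entangles the second fundamental form of $\Sigma$ with the curvature of the map $p\mapsto Y(p)$, and one has to verify carefully that, once the algebra settles, it is precisely the $Y$-directional concavity of $\psi$ --- and, for the prefactor $c$, the positivity coming from $\kappa<1$ together with $\H2$ and $\H3$ --- that controls the sign of $\mathcal Q_\psi$, with no residual curvature of $\Sigma$ in an unfavourable direction. A secondary difficulty is to show that $\mathcal Q_0$ is negative definite uniformly in the direction of $\na\psi(Z)$: the $\na\psi$-dependence enters $\mathcal Q_0$ through $t_{p_k}t_{p_l}$ and through the lower-order part of $t_{p_kp_l}$, and one needs $\kappa<1$ to ensure these cannot overwhelm the manifestly negative core terms coming from the second $p$-derivatives of $\mathcal A(0,\cdot)$ and of $1/b$, so that $\sum_{lm}H_{lm}\eta_l\eta_m$ stays strictly negative even when $\Sigma$ is flat in the $Y$ direction.
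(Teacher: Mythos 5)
Your proposal is correct and follows essentially the same route as the paper: you differentiate $I+II$ twice in $p$, compute $(1/t)_{p_l}$ and $(1/t)_{p_lp_m}$ by implicit differentiation of $\psi(\rho e_{n+1}+tY)=0$, isolate the single block carrying $D^2\psi$ as $c\,V(\eta)^\top D^2\psi\,V(\eta)$ with $V(\eta)\in T_Z\Sigma$ and $c<0$ (which is exactly the paper's first term in \eqref{Parxomenko}, up to passing to $Y$-adapted coordinates), and then observe that the $\psi$-free remainder is a negative multiple of $q_{p_lp_m}\eta_l\eta_m>0$. The one computational point you leave implicit, and which the paper spells out, is the identity $[Y(\kappa v+q)]_{p_lp_m}=\kappa e_{n+1}q_{p_lp_m}$, which is what collapses the $\nabla\psi\cdot Y_{p_lp_m}$ and cross terms $q_{p_m}(1/t)_{p_l}+q_{p_l}(1/t)_{p_m}$ into the clean coefficient $\frac{\kappa\psi^{n+1}}{t\,\nabla\psi\cdot Y}(\cdots)q_{p_lp_m}$ of \eqref{Parxomenko}; but this is an algebraic verification, not a gap in the idea.
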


Theorem \ref{thm-2} implies the MTW condition at every point. In fact, let $Z_0\in \Sigma$. Rotate the coordinate system of $\R^{n+1}$ in such way that $Z$ is the image of the north pole in the new
coordinate system. Since $\mathcal A, \mathcal B$ are smooth functions of their arguments it follows that if $H<0$ at 
$x=0$ then $H< 0$ in some neighborhood  of $x=0$. We also briefly discuss the case $\kappa>1$ in Remark \ref{rem:zagalla}.

In Section \ref{sec:Regularity}, we  prove the $C^{2,\alpha}$ regularity the Brenier solutions constructed in \cite{Gut-Hua}. To do this  we first prove in Section \ref{sec:Aleksandrov}, using a Legendre type transform, that these refractors are Aleksandrov solution, i.e. satisfy \eqref{en-loc-1}. We next show in Section \ref{sec:loc glob}, that if $\V$ is R-convex  (see Definition \ref{def-R-conv}) a local support ovaloid of  the weak solution is in fact a global support ovaloid, which follows from the MTW property of the solution and the visibility conditions assumed in \cite{Gut-Hua}, see Lemma \ref{lem:loc-glob}. Using this geometric property, one can conclude from 
the a priori estimate established in  \cite{MTW}, \cite{Trudinger-D}, \cite{Trudinger}, the local 
$C^{2,\alpha}$ regularity by using the standard mollification argument 
in small balls, see also \cite{KW}. Thus we have 
\begin{theorem}\label{thm:thm4}
Let $f, g\in C^2$ and $\lambda\le f, g\le\Lambda$ for some constants $\lambda, \Lambda>0$. Let $p\in \Gamma\cap \{tX, X\in \U,  t>0\}$
such that $\H1-\H4$ are satisfied, for every such $p$, and $\Sigma$ is $R$-convex (see Definition \ref{def-R-conv}). Then every Aleksandrov weak solution of the refractor problem is locally $C^2$ smooth.  
\end{theorem}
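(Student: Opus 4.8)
The plan is to follow the roadmap sketched above: first embed an Aleksandrov solution $\rho$ into the framework of prescribed Jacobian (generated) equations built in Section~\ref{sec:2}, then use the MTW inequality of Theorem~\ref{thm-2} together with the $R$-convexity of $\Sigma$ to promote local support ovaloids into global ones, and finally invoke the interior a priori estimates for Monge--Amp\`ere type equations through a localization-and-mollification argument. In the first step, let $G$ be the generating function of Section~\ref{sec:2}; by the construction of \cite{Gut-Hua} and the equivalence of Brenier and Aleksandrov solutions proved in Section~\ref{sec:Aleksandrov}, $\rho$ is touched from below at each of its points by a Cartesian oval, so $\rho$ is locally $G$-convex and $\mathcal R$ is the associated $G$-subdifferential. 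Since $\kappa<1$, the matrix $D^2\rho+H(x,\rho,\nabla\rho)$ of Theorem~\ref{thm-1} is nonnegative, so $\rho$ is a weak solution of \eqref{eq-intro-h}; moreover, on compact subsets the hypotheses $\H1$--$\H4$ keep $t$, $y_{n+1}$, $x_{n+1}$ and $\nabla\psi\cdot Y/|\nabla\psi|$ bounded away from $0$ and $\infty$, which together with $\lambda\le f,g\le\Lambda$ and the invertibility of $\mathcal M$ near any point (Theorem~\ref{thm-1}) forces the right-hand side of \eqref{eq-intro-h} to lie between two positive constants.

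For the second step, Theorem~\ref{thm-2} gives the MTW condition \eqref{def-MTW} at $x=0$, and the rotation argument recorded right after that theorem extends it to every point of $\Gamma$. Plugging this into the Loeper--Trudinger--Wang local-to-global mechanism --- and using the $R$-convexity of $\Sigma$ from Definition~\ref{def-R-conv} together with the visibility conditions of \cite{Gut-Hua} --- yields Lemma~\ref{lem:loc-glob}: every local $G$-support ovaloid of $\rho$ is in fact a global one. Hence $\rho$ is globally $G$-convex over an $R$-convex target, which is precisely the structural situation in which the interior regularity theory for generated Monge--Amp\`ere equations operates.

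In the third step, fix $p\in\Gamma$; after a rotation of $\R^{n+1}$ we may assume $p$ lies over $x=0$. Working in a small ball $B_r\subset\U$ about the origin and in its image $\mathcal R(B_r)$, the function $\rho$ solves \eqref{eq-intro-h} there in the Aleksandrov sense with data controlled as in the first step. Mollify $f$ and $g$ to smooth $f_\eps,g_\eps$, renormalized so that the localized balance \eqref{en-loc-1} holds on $B_r$ and so that $\lambda/2\le f_\eps,g_\eps\le 2\Lambda$, and solve the corresponding near-field problem to obtain smooth, globally $G$-convex refractors $\rho_\eps$; comparison and stability of Aleksandrov solutions give $\rho_\eps\to\rho$ locally uniformly. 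The a priori $C^{2,\alpha}$ estimates of \cite{MTW}, \cite{Trudinger-D}, \cite{Trudinger} (see also \cite{KW}) apply to each $\rho_\eps$ on $B_{r/2}$ with constants depending only on $n$, $\lambda$, $\Lambda$, $\|f\|_{C^2}$, $\|g\|_{C^2}$, $\|\psi\|_{C^2}$, the structural constants coming from $\H1$--$\H4$, the MTW constant $c_0$ of Theorem~\ref{thm-2}, and the $R$-convexity of $\Sigma$; in particular these bounds are uniform in $\eps$. Letting $\eps\to0$ gives $\rho\in C^{2,\alpha}(B_{r/2})$, hence $\rho$ is $C^2$ near $p$, and since $p\in\Gamma$ was arbitrary, $\rho$ is locally $C^2$.

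The main obstacle is the second step: one must check that the generating function $G$ of Section~\ref{sec:2} genuinely satisfies the full list of Trudinger--Wang structure conditions --- the nondegeneracy and bi-twist/domain conditions as well as the MTW condition in the form \eqref{def-MTW} --- on the relevant source and target domains, and that $R$-convexity in the sense of Definition~\ref{def-R-conv} is exactly the target convexity the local-to-global argument requires; this is where the concavity of $\psi$ in the $Y$-direction and the visibility hypotheses of \cite{Gut-Hua} genuinely enter. A secondary difficulty is justifying the localization in the third step: that restricting the Aleksandrov solution to a small ball produces a well-posed localized near-field problem whose data may be mollified without destroying the balance condition or the uniform structural bounds, so that the interior estimates really do pass to the limit.
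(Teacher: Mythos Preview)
Your proposal is correct and follows essentially the same route as the paper: establish the MTW condition (Theorem~\ref{thm-2}), use it together with $R$-convexity to upgrade local supporting ovaloids to global ones (Lemma~\ref{lem:loc-glob}), and then apply the interior a~priori estimates of \cite{MTW,Trudinger-D,Trudinger} via a localization--mollification argument as in \cite{KW}. In fact your write-up is considerably more detailed than the paper's, which after Lemma~\ref{lem:loc-glob} simply states that ``the proof of $C^{2,\alpha}$ regularity follows as it was explained in the Introduction''; one small overstatement is that Section~\ref{sec:Aleksandrov} only proves Brenier $\Rightarrow$ Aleksandrov, not the full equivalence you invoke, so the existence of supporting ovaloids for a general Aleksandrov solution is being assumed rather than derived.
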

The conditions in Theorem \ref{thm:thm4} are optimal, i.e.  if one of the conditions stated above fails then examples of non $C^2$ solutions can be constructed as in \cite{KW}.

We mention that existence and regularity of other refractor problems is already studied in the literature. The far field case when the target is a set of direction in the sphere is formulated in \cite{Gut-Hua-2} as a mass transport problem and the MTW conditions for the cost functions was analyzed in both cases when $n_1<n_2$ and when $n_1>n_2$. In the case of a planar source, i.e. when radiations are emanated from a plane with vertical direction, $C^{1,\alpha}$ estimates is proved in \cite{Gut-Tou} when $n_1<n_2$, and in \cite{Abe-Gut-Giu} when $n_1>n_2$. Local $C^2$ regularity of the planar source refractor problem is studied in \cite{K-siam}. 
Existence of the solutions with lenses (i.e. two surfaces) and arbitrary radiant field is proved in \cite{Gut-Sab:SIAM}, \cite{Gut-Sab:Freeform}. 

Near field problems differs point from the far field case since it cannot be formulated as a mass transport setting. \cite{Trudinger},  formulated the problem into a prescribed Jacobian setting and formulates conditions for existence and regularity of these solutions. More results in this direction were obtained in \cite{Trudinger-D2}, \cite{Gut-Abe}, and \cite{Jun-Gui}. In these paper, the authors analyzed sufficient conditions that shall be satisfied by surfaces solutions to Monge-Amp\`ere type equation in order to obtain smoothness of the optical surfaces. 

Energy problems for reflective surfaces have been also studied. In this case, Snell's law is much less complicated yielding to a simpler Monge-Amp\`ere type equation. Regularity is established in various papers. We mentions the following results for the far field case \cite{Wan}, \cite{Loep}, and \cite{KW} for the near field case.

We summarize the notation used in this paper in the following table:

\textbf{List of Notations}
\begin{align*}
\S^n_+&=\{X=(x,x_{n+1})\in \R^{n+1}:|x|^2+x_{n+1}^2=1, x_{n+1}>0\},\\
\kappa&=\dfrac{n_1}{n_2},\\
a&=\rho^2+|\nabla \rho|^2-(\nabla \rho\cdot x)^2,\\
b&=\dfrac{\kappa \rho-\sqrt{a-\kappa^2(a-\rho^2)}}{a},\\
q&=\sqrt{a-\kappa^2(a-\rho^2)},\\
\alpha&=-\dfrac{b^2}{\kappa^2-1}\left[\dfrac{\kappa q+\rho+(\kappa^2-1)x\cdot \nabla \rho}{q}\right],\\
\beta&=\dfrac{b^2}{q},\\
Q&=\dfrac{b}{q+b\left(|\nabla q|^2-\nabla\rho\cdot x(\rho+\nabla \rho\cdot x)\right)},\\
\sigma&=\kappa-b(\rho+x\cdot \na \rho),\\
\gamma &=b-Q\left(\sigma\,x\cdot \nabla \rho+b|\nabla \rho|^2\right),\\
F&=\sigma+\alpha |\nabla \rho|^2-(\alpha(\rho+x\cdot \nabla \rho)+2b)x\cdot \nabla \rho.\\
\end{align*}

\section{Preliminaries  and main formulae}\label{sec:2}
Let $\U$ be an open subset contained in $\S^n_+$. Let $\Gamma$ be the surface given radially by $\rho(x)X$, with $x=(x_1,\cdots,x_n)$, and $X=(x,x_{n+1})\in \U$, and $\rho$ is $C^2$ positive function. We assume that the surface $\Gamma$ separates two media $1$ and $2$ with corresponding constant refractive indices $n_1$ and $n_2$.
In medium $2$ we are given a target $\Sigma$ parametrized implicitly by the function $\psi$, i.e. $\psi(z_1,\cdots,z_{n+1})=0$ for every $(z_1,\cdots,z_{n+1})\in \Sigma$. Let $\V$ be an open subset of $\Sigma$.
We are given positive functions $f\in L^1(\U)$, and $g\in L^1(\V)$ such that
$$\int_{\U} f=\int_{\V} g.$$
A ray emitted from the source $O$ with unit direction $X\in \U$ is refracted at $\rho(x)X$ by $\Gamma$ into medium $2$ with unit direction $Y$, the refracted ray hits the target $\Sigma$ at the point $Z(x)=(z_1(x),\cdots,z_{n+1}(x))$.  Let $\mathcal R$ be the imaging map for the refractor $\Gamma$ defined as follows: for every Borel set $E\subseteq \U$
$$\mathcal R(E)=\{Q\in \Sigma: Q=Z(x) \text{ for some } (x,\sqrt{1-|x|^2})\in \U\}.$$
We say that $\Gamma=\{\rho(x)X \ : \ X\in \U\}$ is an Aleksandrov solution to the near field refractor problem if $Z(x)\in \V$ for every $(x,x_{n+1})\in \U$, and for every Borel set $E$
$$\int_{E} f(X)d\sigma(\S^n_+)=\int_{\mathcal R(E)}g(Z)d\sigma(\Sigma).$$
The goal of this section is to compute the differential equation satisfied by $\rho$, and prove Theorem \ref{thm-1}. For this we assume that $\rho$, $\U$ and $\V$ satisfies conditions $\H1-\H4$.

Let $\nu$ be the unit normal at $\rho(x)X$ toward the medium $n_2$. From  \cite{KW} Proposition 2.1 or \cite[Lemma 8.1]{GM}, $\nu$ is given by
\begin{equation}\label{eq:unit normal}
\nu(x) =-\dfrac {(\nabla \rho,0)-X\left( \rho+\nabla \rho\cdot x\right) }
{\sqrt {\rho^2+|\na \rho|^2-(x\cdot \na \rho)^2}}.
\end{equation}
At $\rho(x)X$ the ray emitted from $O$ with direction $X$ is refracted into the medium $n_2$ with unit direction $Y=(y_1,\cdots,y_n,y_{n+1}):=(y,y_{n+1})$.
 By Snell's law on $\R^{n+1}$, we have that $X-\dfrac{n_2}{n_1} Y$ is parallel to $\nu$, i.e.
 \begin{equation}\label{eq:Snell}
 X-\dfrac{n_2}{n_1} Y=\lambda \nu,
 \end{equation}
 with $\lambda=X\cdot \nu -\dfrac{n_2}{n_1}\sqrt{1-\left(\dfrac{n_1}{n_2}\right)^2(1-(X\cdot \nu)^2)},$ see \cite{Gut-Hua-2}.

\begin{figure}
\begin{center}
 \begin{tikzpicture}

    \coordinate (O) at (0.15,0.6) ;
    \coordinate (A) at (0,4) ;
    \coordinate (B) at (0,-4) ;
    
       
       \filldraw[blue!25!,opacity=.3] (-4, 0.5) to[out=18,in=-197] (O)  to[out=-18,in=110](4, -3.2)--(4,-4)--(4, 6)--(-4,6); 

       \filldraw[blue!60!,opacity=.3] (-4, 0.5) to[out=18,in=-197] (O)  to[out=-18,in=110](4, -3.2)--(4,-4)--(-4, -4)--(-4,0.5); 
              
    \node[right] at (2,2) {Medium $2$};
    \node[left] at (-2,-2) {Medium $1$};
    \node[right] at (2,4.8) {$\mathlarger{\mathlarger \Sigma}$};
\node[right] at (-0.6,4.8){$Z$};
        \node[] at (1.7, -3.6)  {$O$};

    \draw[dash pattern=on5pt off3pt] (1,4) -- (-1,-4) ; 

    \draw[red,ultra thick,directed] (O) -- (90:5.1); 
    \draw[blue,directed,ultra thick] (-70:4.24)--(O) ;
       \draw[->, black,ultra thick] (O) -- (76:3.5);
           \node[->, black,ultra thick] at (66:3)  {$\nu$};

    \draw (0.7, 2.8) arc (70:96:1.4); 
    \draw (-0.4,-1.6) arc (250:300:1.4) ;
    \node[] at (280:1.3)  {$\theta_{1}$};
        \node[] at (330:1.0)  {$X$};
    \node[] at (81.8:2.4)  {$\theta_{2}$};
        \node[] at (95.8:2.8)  {$Y$};
  \draw[thick] (3.7, 3.45) arc (50:110:7.4); 
\end{tikzpicture}
\end{center}
\caption{Snell's law of refraction, see Luneburg \cite{Luneburg} page 65.}\label{fig1}
\end{figure}
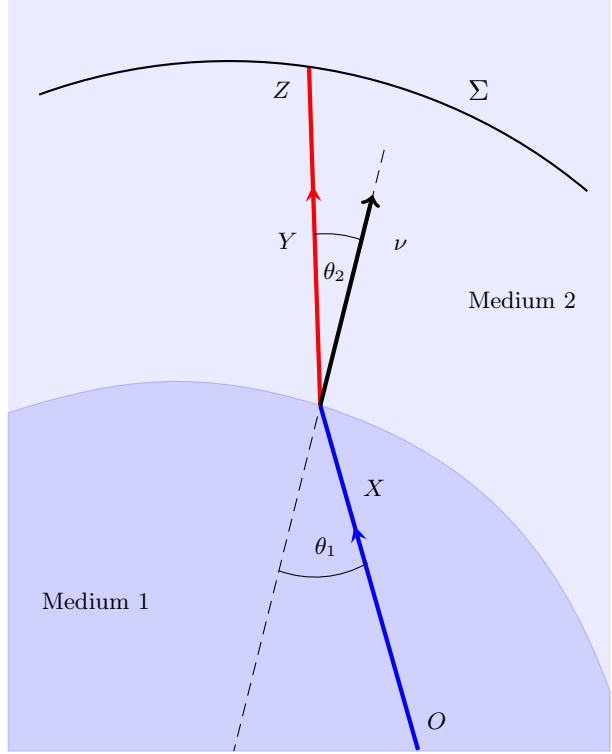

We have that the refracted  ray with direction $Y$ hits the surface $\Sigma$ at the point
\begin{equation}\label{def-Z}
Z(x)=\rho(x) X+t(x)Y(x):=(z(x),z_{n+1}(x)), 
\end{equation}
where $t(x)=|Z(x)-\rho(x)X|$ denotes the {\bf stretch function}.

\subsection{First form of $Dz$}
In this section we derive a formula for $Dz$ where 
$z$ is the projection of $Z$ onto $\{x_{n+1}=0\}$. We denote $\nabla \psi = \left(\dfrac{\p \psi}{\p z_1},\cdots, \dfrac{\p \psi}{\p z_{n}},\dfrac{\p \psi}{\p z_{n+1}}\right)=\left(\psi^1,\psi^2,\cdots,\psi^n,\psi^{n+1}\right)=\left(\widehat \nabla \psi ,\psi^{n+1}\right).$ Recall from Condition $\H1$, that $y_{n+1}>0$, and so $y_{n+1}=\sqrt{1-|y|^2}$.


\begin{proposition}\label{prop-Dz}
Let $\Sigma$ be defined implicitly as the zero level set of some smooth function $\psi:\R^{n+1}\to \R$.
Let $t$ be the stretch function determined implicitly from the identity $\psi(X\rho+tY)=0$. 
Let $z(x)=\rho(x)\,+t(x)y(x)=(z_1(x),\cdots,z_n(x))$ be the projection of $Z$, given by  \eqref{def-Z}, onto 
the plane $\{x_{n+1}=0\}$, then 
\begin{equation}\label{eq:Dz form 1}
Dz=(\partial_{x_j} z_i)_{i,j}=\mu_1+t\mu_2 Dy=\mu_2\left(\mu_0+t  Dy\right),\\
\end{equation}
where the matrices $\mu_1, \mu_2,\mu_0$ are defined below
 \begin{eqnarray*}
 \mu_1
 &=& 
 \rho\, \id +x\otimes \nabla \rho -\dfrac{1}{\nabla \psi \cdot Y}\left(\rho\, \left(y\otimes \widehat \nabla \psi\right)-\dfrac{\rho\psi^{n+1}}{x_{n+1}}\,y\otimes x +\left(\nabla \psi \cdot X\right)y\otimes \nabla \rho\right), \\
 \mu_2
 &=&
 \id-\dfrac{1}{\nabla \psi \cdot Y} y\otimes  \left(\widehat \nabla \psi -\dfrac{\psi^{n+1}}{y_{n+1}}y\right),\\
  \mu_0
 &=&
 \mu^{-1}_2\mu_1=
 \rho \id+x\otimes \na \rho+y\otimes \left[\rho x-\rho y-(X\cdot Y)\na \rho\right].
  \end{eqnarray*}
\end{proposition}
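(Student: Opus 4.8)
The plan is to differentiate the explicit parametrization $z=\rho\,x+t\,y$ directly, to extract $\na t$ by implicit differentiation of the constraint $\psi(\rho X+tY)=0$, and then to carry out two elementary linear-algebra reductions: factoring $Dy$ out on the right, which exposes the $\mu_1+t\mu_2\,Dy$ structure, and inverting the rank-one perturbation $\mu_2$ of the identity by the Sherman--Morrison formula. Throughout, $\rho\in C^2$ forces $Y$ (hence $Dy$) to be $C^1$ through Snell's law \eqref{eq:Snell}, and the implicit function theorem --- applicable since $\na\psi\cdot Y>0$ by $\H3$ --- makes $t$ a $C^1$ function of $x$.

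First I would differentiate $z_i=\rho\,x_i+t\,y_i$ ($1\le i\le n$) in $x_j$, which immediately gives the matrix identity $Dz=\rho\,\id+x\otimes\na\rho+y\otimes\na t+t\,Dy$, so that everything reduces to computing $\na t$. Differentiating $\psi(\rho X+tY)=0$ in $x_j$ and using $\p_{x_j}x_{n+1}=-x_j/x_{n+1}$ together with $\p_{x_j}y_{n+1}=-(y\cdot\p_{x_j}y)/y_{n+1}$ (both legitimate since $x_{n+1},y_{n+1}>0$, the latter being $\H1$), the chain rule yields the scalar relation
\[
(\na\psi\cdot X)\,\p_{x_j}\rho+\rho\Big(\psi^j-\tfrac{\psi^{n+1}}{x_{n+1}}x_j\Big)+(\na\psi\cdot Y)\,\p_{x_j}t+t\,\Big(\widehat\na\psi-\tfrac{\psi^{n+1}}{y_{n+1}}y\Big)\cdot\p_{x_j}y=0 .
\]
Solving for $\p_{x_j}t$ (licit because $\na\psi\cdot Y>0$) and rewriting in vector form displays $\na t$ as a linear combination of $\na\rho$, $\widehat\na\psi$, $x$ and the vector $(Dy)^{T}v$, where $v:=\widehat\na\psi-\tfrac{\psi^{n+1}}{y_{n+1}}y$.

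Next I would substitute this expression for $\na t$ back into $Dz$. The only point here that is not pure bookkeeping is the rank-one identity $y\otimes\big((Dy)^{T}v\big)=(y\otimes v)\,Dy$, which moves $Dy$ to the right; collecting the terms not containing $Dy$ into $\mu_1$ and reading off $\mu_2=\id-\tfrac1{\na\psi\cdot Y}\,y\otimes v$ yields the first two expressions in \eqref{eq:Dz form 1}. For the third, $\mu_2$ is the identity plus a single rank-one dyad, so Sherman--Morrison gives $\mu_2^{-1}=\id+(\na\psi\cdot Y-v\cdot y)^{-1}\,y\otimes v$; the scalar $v\cdot y$ simplifies, using $\widehat\na\psi\cdot y=\na\psi\cdot Y-\psi^{n+1}y_{n+1}$ and $|y|^2=1-y_{n+1}^2$, to $v\cdot y=\na\psi\cdot Y-\psi^{n+1}/y_{n+1}$, whence $\na\psi\cdot Y-v\cdot y=\psi^{n+1}/y_{n+1}$ and $\mu_2^{-1}=\id-y\otimes y+\tfrac{y_{n+1}}{\psi^{n+1}}\,y\otimes\widehat\na\psi$; in particular $\mu_2$ is invertible since $\psi^{n+1}>0$ by $\H2$. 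Finally I would form $\mu_0=\mu_2^{-1}\mu_1$ using the rule $(u\otimes w)(p\otimes q)=(w\cdot p)\,u\otimes q$ together with the contractions $\widehat\na\psi\cdot x=\na\psi\cdot X-\psi^{n+1}x_{n+1}$, $\widehat\na\psi\cdot y=\na\psi\cdot Y-\psi^{n+1}y_{n+1}$ and $X\cdot Y=x\cdot y+x_{n+1}y_{n+1}$: one checks that the $\widehat\na\psi$-dependent contributions cancel against the $\widehat\na\psi$-term already present in $\mu_1$, and the surviving dyads collapse to the displayed closed form for $\mu_0$ (at $x=0$, which is where everything is evaluated in the sequel, the $y\otimes x$ term drops out in any case).

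I expect the only genuine difficulty to be organizational rather than conceptual: the product $\mu_2^{-1}\mu_1$, of two matrices each a low-rank perturbation of a multiple of $\id$, unfolds into on the order of a dozen dyadic products, and the crucial cancellations surface only after the scalar contractions above are substituted. I would keep this manageable by bucketing every dyad according to its right-hand factor ($\widehat\na\psi$, $x$, $y$, or $\na\rho$) and simplifying bucket by bucket. There is no analytic obstacle: hypotheses $\H1-\H3$ are exactly what licenses the three divisions by $x_{n+1}$, $y_{n+1}$ and $\na\psi\cdot Y$, while $\H2$ secures the invertibility of $\mu_2$ needed for the third form of \eqref{eq:Dz form 1}.
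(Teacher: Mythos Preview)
Your proposal is correct and follows essentially the same route as the paper: differentiate $z=\rho x+ty$ to obtain $Dz=\rho\,\id+x\otimes\na\rho+y\otimes\na t+t\,Dy$, compute $\na t$ by implicitly differentiating $\psi(\rho X+tY)=0$ (the paper's Lemma~\ref{lem-nabla-t}), factor out $Dy$ via the rank-one identity to read off $\mu_1,\mu_2$, invert $\mu_2$ by Sherman--Morrison, and then simplify $\mu_0=\mu_2^{-1}\mu_1$ (the paper's Lemma~\ref{lem-mu-0}). Your ``bucketing by right-hand factor'' is exactly how the paper organizes the computation of $\mu_0$, and your observation that the $\widehat\na\psi$-terms cancel is precisely the content of the paper's Remark that $\mu_0$ does not depend on $\psi$.
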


The proof of Proposition \ref{prop-Dz} will follow from Lemmas \ref{lem-nabla-t} and \ref{lem-mu-0} stated below.
Observe that for $1\leq i\leq n$ we have,
$z_i(x)=\rho(x)x_i+t(x) y_i(x)$. Differentiation in $x_j$, $1\leq j\leq n$ yields
\begin{equation}\label{eq:p zi}
\dfrac{\p z_i}{\p x_j}= \rho(x)\delta_{ij}+\dfrac{\p \rho}{\p x_j}x_i+\dfrac{\p t}{\p x_j}y_i(x)+t(x)\dfrac{\p y_i}{\p x_j},\qquad \text{$1\leq j\leq n.$}
\end{equation}
Then
\begin{equation}\label{Dz-form-00}
Dz=\rho\,\id+x\otimes \nabla \rho+y\otimes \nabla t+t\,Dy.
\end{equation}

We first find a formula for $\nabla t$.
\begin{lemma}\label{lem-nabla-t}
If $t$ satisfies $\psi(X\rho+tY)=0$, then 
\begin{equation*}\label{nabla-t}
\nabla t =-\dfrac{1}{\nabla \psi\cdot Y}\left(\rho\widehat \nabla \psi-\dfrac{\rho\psi^{n+1}}{x_{n+1}} x +\left(\nabla \psi \cdot X\right)\na \rho+
t (Dy)^t 
\left(
\widehat \nabla \psi -\dfrac{\psi^{n+1}}{y_{n+1}}y \right)
\right).
\end{equation*}
\end{lemma}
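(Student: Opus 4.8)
The plan is to obtain the formula for $\nabla t$ by implicit differentiation of the defining identity $\psi(X\rho+tY)=0$, i.e.\ of $\psi(Z(x))=0$ where $Z=X\rho+tY$ and $X=(x,x_{n+1})$ with $x_{n+1}=\sqrt{1-|x|^2}$. Differentiating in $x_j$ gives $\nabla\psi(Z)\cdot \partial_{x_j}Z=0$, so the whole task reduces to computing $\partial_{x_j}Z$ and then solving a scalar linear equation for $\partial_{x_j}t$.

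First I would write $Z=(z,z_{n+1})$ and differentiate componentwise. For the first $n$ components we already have \eqref{eq:p zi}, which in matrix form is \eqref{Dz-form-00}; for the last component $z_{n+1}=\rho x_{n+1}+t y_{n+1}$ I would differentiate using $\partial_{x_j}x_{n+1}=-x_j/x_{n+1}$ and $y_{n+1}=\sqrt{1-|y|^2}$, so that $\partial_{x_j}y_{n+1}=-\,y\cdot\partial_{x_j}y/y_{n+1}$. This yields
\[
\partial_{x_j}z_{n+1}=x_{n+1}\,\partial_{x_j}\rho-\frac{\rho x_j}{x_{n+1}}+y_{n+1}\,\partial_{x_j}t-\frac{t}{y_{n+1}}\,y\cdot\partial_{x_j}y.
\]
Then I would plug $\partial_{x_j}Z=(\partial_{x_j}z,\partial_{x_j}z_{n+1})$ into $\widehat\nabla\psi\cdot\partial_{x_j}z+\psi^{n+1}\partial_{x_j}z_{n+1}=0$, collect the terms proportional to $\partial_{x_j}t$, and note their coefficient is $\widehat\nabla\psi\cdot y+\psi^{n+1}y_{n+1}=\nabla\psi\cdot Y$, which is nonzero by $\H3$. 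Solving for $\partial_{x_j}t$ and assembling the components into the gradient vector produces exactly the claimed formula, once one recognizes the $Dy$-terms: the contributions $\widehat\nabla\psi\cdot(t\,\partial_{x_j}y)$ and $-\tfrac{t}{y_{n+1}}\psi^{n+1}\,y\cdot\partial_{x_j}y$ combine into $t\,(Dy)^t\big(\widehat\nabla\psi-\tfrac{\psi^{n+1}}{y_{n+1}}y\big)$ evaluated in the $j$-th slot, and similarly the $\rho$- and $\nabla\rho$-terms give $\rho\widehat\nabla\psi-\tfrac{\rho\psi^{n+1}}{x_{n+1}}x+(\nabla\psi\cdot X)\nabla\rho$ after using $\psi^{n+1}x_{n+1}+\widehat\nabla\psi\cdot x=\nabla\psi\cdot X$ (up to the split between the $\delta_{ij}$ and $x\otimes\nabla\rho$ pieces, one checks $\rho\,\widehat\nabla\psi_j$ comes from the $\rho\delta_{ij}$ term and $(\nabla\psi\cdot X)\partial_{x_j}\rho$ is partly from $x\otimes\nabla\rho$ and partly from the $z_{n+1}$ derivative).

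This is essentially a bookkeeping computation with no conceptual obstacle; the one point requiring care is the correct handling of the $x_{n+1}$- and $y_{n+1}$-dependence (the constraints $|X|=1$, $|Y|=1$ that make $x_{n+1},y_{n+1}$ functions of $x,y$), since a sign slip there propagates into the final identity. The nonvanishing of $\nabla\psi\cdot Y$ needed to divide is furnished by hypothesis $\H3$, and $x_{n+1}>0$ on $\U\subset\S^n_+$ together with $y_{n+1}>0$ from $\H1$ guarantee the other denominators make sense. I would end by simply displaying the resulting expression and observing it matches the statement.
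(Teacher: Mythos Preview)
Your proposal is correct and follows essentially the same route as the paper: implicit differentiation of $\psi(X\rho+tY)=0$ in $x_j$, computing $\partial_{x_j}z_{n+1}$ via the constraints $x_{n+1}=\sqrt{1-|x|^2}$ and $y_{n+1}=\sqrt{1-|y|^2}$, collecting the $\partial_{x_j}t$ terms (whose coefficient is $\nabla\psi\cdot Y\neq 0$ by $\H3$), and solving. The grouping you describe for the $\rho$-, $\nabla\rho$-, and $Dy$-contributions matches the paper's computation exactly.
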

\begin{proof}

Observe that  $z_{n+1}=\rho(x)x_{n+1}+t(x)y_{n+1}=\rho(x)\sqrt{1-|x|^2}+t(x)\sqrt{1-|y|^2}$. Then for $1\leq j\leq n$ we have 
\begin{align}\label{eq:p Zn+1}
\dfrac{\p z_{n+1}}{\p x_j}=x_{n+1}\dfrac{\partial \rho}{\partial x_j}-\rho(x)\dfrac{x_j}{x_{n+1}}+y_{n+1}\dfrac{\p t}{\p x_j}-t(x)\dfrac{\sum_{k=1}^n y_k\dfrac{\partial y_k}{\partial x_j}}{y_{n+1}}.
\end{align}

Recall that the surface $\Sigma$ is defined implicitly by the equation $\psi(Z)=0,Z\in \Sigma$.
 Since on $\Sigma$ we have that $Z=X\rho+tY$ then after differentiating  $\psi(X\rho+tY)=0$ with respect to $x_j$, and $1\leq j\leq n$, and using \eqref{eq:p zi}, \eqref{eq:p Zn+1} we obtain 
\begin{eqnarray*}
0
&=&
\sum_{k=1}^{n} \psi^k\dfrac{\p z_k}{\p x_j}+\psi^{n+1}\dfrac{\p z_{n+1}}{\p x_j}\\
&=&
\sum_{k=1}^n \psi^k\left(\rho(x)\delta_{kj}+\dfrac{\p \rho}{\p x_j}x_k+\dfrac{\p t}{\p x_j}y_k+t(x)\dfrac{\partial y_k}{\p x_j}\right)\\
&&\hspace{2cm}+\psi^{n+1}\left[x_{n+1}\dfrac{\partial \rho}{\partial x_j}-\rho(x)\dfrac{x_j}{x_{n+1}}+y_{n+1}\dfrac{\p t}{\p x_j}- \dfrac{t(x)}{y_{n+1}} \sum_{k=1}^n y_k\dfrac{\partial y_k}{\partial x_j}\right]\\
&=&\rho\,\psi^j-\rho \dfrac{\psi^{n+1}}{x_{n+1}}x_j+\dfrac{\p \rho}{\p x_j}\left(\nabla\psi \cdot X\right)+\dfrac{\p t}{\p x_j}\left(\nabla \psi \cdot Y\right)+t(x)\sum_{k=1}^n\dfrac{\p y_k}{\p x_j}\left(\psi^k-\dfrac{y_k\psi^{n+1}}{y_{n+1}}\right).
\end{eqnarray*}
Hence, writing the above system in vector form yields the identity
$$
0=\rho\widehat \nabla \psi-\dfrac{\rho\psi^{n+1}}{x_{n+1}}x+\left(\nabla \psi \cdot X\right)\nabla \rho+\left(\nabla \psi \cdot Y\right)\nabla t +t \, (Dy)^t \left(\widehat \nabla \psi -\dfrac{y}{y_{n+1}} \psi^{n+1}\right).
$$
%
From Condition $\H3$, $\nabla \psi\cdot Y\neq 0$, so solving the above equation in $\na t$, we obtain Lemma \ref{nabla-t}.
\end{proof}

Given two  vectors $\xi,\eta$ in $\R^n$ and $A$ an $n\times n$ matrix, we have 
\begin{equation}\label{matrix-vector}
\xi\otimes A\eta=\xi (A \eta)^t=\xi\eta^tA^t=(\xi\otimes \eta)A^t,
\end{equation}
hence from  Lemma \ref{lem-nabla-t} we deduce that
\begin{equation}
y\otimes \nabla t=-\dfrac{1}{\nabla \psi\cdot Y}
\left[
\rho\, \left(y\otimes \widehat \nabla \psi\right)-\dfrac{\rho\,\psi^{n+1}}{x_{n+1}}\,y\otimes x +\left(\nabla \psi \cdot X\right)y\otimes \nabla \rho+
t \, y\otimes 
\left(
\widehat \nabla \psi -\dfrac{y}{y_{n+1}}  \psi^{n+1}
\right)
Dy
\right].
\end{equation}
Consequently, by virtue of \eqref{Dz-form-00} we obtain 
\begin{align*}
Dz
&=
 \rho\, \id +x\otimes \nabla \rho -\dfrac{1}{\nabla \psi \cdot Y}\left[\rho\, \left(y\otimes \widehat \nabla \psi\right)-\dfrac{\rho\psi^{n+1}}{x_{n+1}}\,y\otimes x +\left(\nabla \psi \cdot X\right)y\otimes \nabla \rho\right]\\\nonumber
&\qquad
+t \left(\id-\dfrac{1}{\nabla \psi \cdot Y} y\otimes  \left(\widehat \nabla \psi -\dfrac{\psi^{n+1}}{y_{n+1}}y\right)\right)Dy\\
&:=
\mu_1+t\mu_2 Dy.
\end{align*}

Next we recall the Morrison-Sherman formula. 
If $ \mu=\mathcal D+\xi\otimes\eta$ with $A$ invertible matrix then
\begin{equation}\label{Sherman Det}
\det \mu= \left(1+\xi\cdot \mathcal D^{-1}\eta\right)\det \mathcal D.
\end{equation}
If $1+\xi\cdot  \mathcal D^{-1} \eta\neq 0$ then 
\begin{equation}\label{Sherman}
\mu^{-1}=\mathcal D^{-1}-\frac{\mathcal D^{-1}\xi\otimes\eta \mathcal D^{-1}}{1+\xi\cdot \mathcal D^{-1}\eta}, 
\end{equation}
We show that $\mu_2=\id-\dfrac{1}{\nabla \psi \cdot Y} y\otimes  \left(\widehat \nabla \psi -
\dfrac{ \psi^{n+1}}{y_{n+1}}y\right)$ is invertible and calculate $\mu_2^{-1}$.
In fact, applying \eqref{Sherman Det} 
\begin{align}
\det \mu_2
=
1-\dfrac{1}{\nabla \psi \cdot Y} y\cdot  \left(\widehat \nabla \psi -\dfrac{ \psi^{n+1}}{y_{n+1}}y\right)
&=\dfrac{1}{\nabla \psi \cdot Y}\left(\nabla \psi \cdot Y-y\cdot\widehat \nabla \psi+\dfrac{ \psi^{n+1}}{y_{n+1}}|y|^2\right)\label{eq:Det mu2}\\
&=\dfrac{\psi^{n+1}}{\nabla \psi \cdot Y}\left(y_{n+1}+\dfrac{|y|^2}{y_{n+1}}\right)\notag\\
&=\dfrac{\psi^{n+1}}{y_{n+1}\left(\nabla \psi \cdot Y\right)}.\notag
\end{align}
%
Then, from $\H2$, $\mu_2$ is invertible and the Sherman-Woodburry-Morrison formula \eqref{Sherman} yields
\begin{align*}
\mu_2^{-1}
&=
\id+\dfrac{\dfrac{1}{\nabla \psi \cdot Y} y\otimes  \left(\widehat \nabla \psi -
\dfrac{ \psi^{n+1}}{y_{n+1}}y\right)}{\det \mu_2}\\
&=
\id+\dfrac{y_{n+1}}{\psi^{n+1}}y\otimes  \left(\widehat \nabla \psi -
\dfrac{ \psi^{n+1}}{y_{n+1}}y\right)\\
&=
\id+y\otimes \left(\dfrac{y_{n+1}}{\psi^{n+1}} \widehat \nabla \psi - y\right).
\end{align*}

We can rewrite $Dz$ as follows
$Dz=\mu_2\left(\mu_2^{-1}\mu_1+t Dy\right)$. The proof of Proposition \ref{prop-Dz} then follows from the following Lemma.

\begin{lemma}\label{lem-mu-0}
Let $\mu_0=\mu_2^{-1}\mu_1$ and $\mu_1, \mu_2$ are as in Proposition \ref{prop-Dz}.
Then 
\[
\mu_0=\rho \id+x\otimes \na \rho+y\otimes \left[\rho x-\rho y-\na \rho(X\cdot Y)\right].
\]
\end{lemma}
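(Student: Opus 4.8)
The plan is to compute $\mu_0 = \mu_2^{-1}\mu_1$ directly using the explicit formula for $\mu_2^{-1}$ derived just above, namely $\mu_2^{-1} = \id + y\otimes\left(\dfrac{y_{n+1}}{\psi^{n+1}}\widehat\nabla\psi - y\right)$, together with the formula for $\mu_1$ from Proposition \ref{prop-Dz}. Writing $w := \dfrac{y_{n+1}}{\psi^{n+1}}\widehat\nabla\psi - y$ so that $\mu_2^{-1} = \id + y\otimes w$, the product becomes $\mu_0 = \mu_1 + (y\otimes w)\,\mu_1$. Since $\mu_1$ consists of the terms $\rho\,\id + x\otimes\nabla\rho$ plus a linear combination of rank-one terms all of the form $y\otimes(\cdot)$, and since $(y\otimes w)(y\otimes v) = (w\cdot y)\,y\otimes v$ while $(y\otimes w)(A) = y\otimes(A^t w)$ by \eqref{matrix-vector}, every contribution to $(y\otimes w)\mu_1$ is again of the form $y\otimes(\text{something})$. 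Hence $\mu_0 = \rho\,\id + x\otimes\nabla\rho + y\otimes \zeta$ for an explicit vector $\zeta$, and the entire task reduces to identifying $\zeta$ and checking that it equals $\rho x - \rho y - (X\cdot Y)\nabla\rho$.

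First I would record the auxiliary scalar identities needed to simplify $\zeta$: the value $w\cdot y = \dfrac{y_{n+1}}{\psi^{n+1}}(\widehat\nabla\psi\cdot y) - |y|^2$, and crucially $1 + w\cdot y = \dfrac{y_{n+1}}{\psi^{n+1}}(\widehat\nabla\psi\cdot y) + y_{n+1}^2 = \dfrac{y_{n+1}}{\psi^{n+1}}\bigl(\widehat\nabla\psi\cdot y + \psi^{n+1}y_{n+1}\bigr) = \dfrac{y_{n+1}}{\psi^{n+1}}(\nabla\psi\cdot Y)$, which is exactly the reciprocal of $\det\mu_2$ computed in \eqref{eq:Det mu2} — a good consistency check. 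I would also need $\widehat\nabla\psi\cdot x$ together with the relation $\nabla\psi\cdot X = \widehat\nabla\psi\cdot x + \psi^{n+1}x_{n+1}$, and similarly $\widehat\nabla\psi\cdot\nabla\rho$ in terms of $\nabla\psi\cdot$(appropriate vector). Then $\zeta$ is assembled as: the term coming from applying $y\otimes w$ to $\rho\,\id$ gives $\rho\, (\text{from } w)$; applying it to $x\otimes\nabla\rho$ gives $(w\cdot\nabla\rho)\,x$... wait, more carefully $(y\otimes w)(x\otimes\nabla\rho) = y\otimes\bigl((x\otimes\nabla\rho)^t w\bigr) = y\otimes\bigl((\nabla\rho\cdot w)\,x\bigr)$; and applying it to each $y\otimes(\cdot)$ term in $\mu_1$ produces $(w\cdot y)$ times that term. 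Collecting the coefficients of $x$, of $y$, and of $\nabla\rho$ inside $\zeta$, using the scalar identities above to cancel the $\dfrac{1}{\nabla\psi\cdot Y}$ denominators against the factor $1 + w\cdot y = \dfrac{y_{n+1}(\nabla\psi\cdot Y)}{\psi^{n+1}}$, should collapse everything to $\rho x - \rho y - (X\cdot Y)\nabla\rho$.

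The main obstacle is purely the bookkeeping: there are three rank-one pieces in $\mu_1$ plus the $\rho\,\id$ and $x\otimes\nabla\rho$ terms, each generating a contribution, and the simplification hinges on repeatedly recognizing combinations like $\widehat\nabla\psi\cdot y + \psi^{n+1}y_{n+1} = \nabla\psi\cdot Y$ and $\widehat\nabla\psi\cdot x + \psi^{n+1}x_{n+1} = \nabla\psi\cdot X$ so that the $\psi$-dependent quantities cancel entirely — the final answer must be independent of $\psi$, which is both the key structural fact and the thing that makes the cancellation somewhat delicate to track. I would organize the computation by first expanding $\mu_2^{-1}\mu_1$ symbolically, then grouping all terms not of pure $\rho\,\id + x\otimes\nabla\rho$ shape into $y\otimes\zeta$, and finally verifying the three scalar coefficient identities (coefficient of $x$ is $\rho$, of $y$ is $-\rho$, of $\nabla\rho$ is $-(X\cdot Y) = -(x\cdot y + x_{n+1}y_{n+1})$) one at a time.
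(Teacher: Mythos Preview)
Your approach is essentially identical to the paper's: both expand $\mu_2^{-1}\mu_1$ directly using $\mu_2^{-1}=\id+y\otimes w$ with $w=\frac{y_{n+1}}{\psi^{n+1}}\widehat\nabla\psi-y$, observe that every correction term lands in the $y\otimes(\cdot)$ slot, and then simplify the bracketed vector using the identities $\widehat\nabla\psi\cdot y+\psi^{n+1}y_{n+1}=\nabla\psi\cdot Y$ and $\widehat\nabla\psi\cdot x+\psi^{n+1}x_{n+1}=\nabla\psi\cdot X$ so that all $\psi$-dependence cancels. One small slip to fix when you execute: $(y\otimes w)(x\otimes\nabla\rho)=(w\cdot x)\,y\otimes\nabla\rho$, not $(\nabla\rho\cdot w)\,y\otimes x$ (since $(x\otimes\nabla\rho)^t w=(\nabla\rho\otimes x)w=(x\cdot w)\nabla\rho$); with that corrected the bookkeeping goes through exactly as in the paper.
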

\begin{proof}
We have 
$$\mu_0=\mu_2^{-1}\mu_1=
\left(\id+y\otimes\left( \frac{y_{n+1}}{\psi^{n+1}}\widehat \nabla \psi -y\right)\right)
\left\{
\rho \id+x\otimes \na \rho
-\frac1{\na\psi\cdot Y}
y\otimes
\left[
\rho \widehat \nabla \psi-\rho\frac{\psi^{n+1}}{y_{n+1}} x+(\na \psi\cdot X)\na \rho
\right]
\right\}.
$$

In order to simplify $\mu_0$ we notice that for every $ a, b, c, d$ column vectors in $\R^n$,
\begin{equation*}
(a\otimes b)(c\otimes d)= ab^tcd^t=a (b\cdot c) d^t=(b\cdot c)a\otimes d.
\end{equation*}
Then 

\begin{eqnarray*}
\mu_0
&=&
\rho \id+x\otimes \na \rho-\frac1{\na\psi\cdot Y} y\otimes
\left[
\rho \widehat \nabla \psi-\rho\frac{\psi^{n+1}}{y_{n+1}}x +(\na \psi\cdot X)\na \rho
\right]
\\
&&+
\rho y\otimes \left( \frac{y_{n+1}}{\psi^{n+1}}\widehat \nabla \psi -
y\right) + \left( \frac{y_{n+1}}{\psi^{n+1}}\widehat \nabla \psi\cdot x -
y\cdot x\right)y\otimes \na \rho \\
&&
-\frac1{\na \psi\cdot  Y}\left( \frac{y_{n+1}}{\psi^{n+1}}\widehat \nabla \psi\cdot y -
|y|^2\right) y\otimes \left[
\rho \widehat \nabla \psi-\rho\frac{\psi^{n+1}}{y_{n+1}} x+(\na \psi\cdot X)\na \rho
\right].
\\
\end{eqnarray*}
Next we regroup the terms by separating those containing $y$ in tensorial products
\begin{eqnarray*}
\mu_0
&=&
\rho \id+x\otimes \na \rho\\
&&+
y\otimes
\Bigg\{
-\frac1{\na \psi\cdot  Y}\left[
\rho \widehat \nabla \psi-\rho\frac{\psi^{n+1}}{y_{n+1}}x +(\na \psi\cdot X)\na \rho
\right] 
+
\rho\left( \frac{y_{n+1}}{\psi^{n+1}}\widehat \nabla \psi -
y\right)
+
 \left( \frac{y_{n+1}}{\psi^{n+1}}\widehat \nabla \psi \cdot x-
y\cdot x\right)\na \rho\\
&&\hspace{2cm}
-\frac1{\na\psi\cdot Y} \left( \frac{y_{n+1}}{\psi^{n+1}}\widehat \nabla \psi \cdot y-
|y|^2\right) \left[
\rho \widehat \nabla \psi-\rho\frac{\psi^{n+1}}{y_{n+1}}x +(\na \psi\cdot X)\na \rho
\right]
\Bigg\}.
\end{eqnarray*}

The expression in the brackets can be simplified as follows: 
\begin{eqnarray*}
\Bigg\{\dots\Bigg\} 
&=&
-\frac1{\na\psi\cdot Y} \left( \frac{y_{n+1}}{\psi^{n+1}}\widehat \nabla \psi \cdot y+1-
|y|^2\right)
\left[
\rho \widehat \nabla \psi-\rho\frac{\psi^{n+1}}{y_{n+1}} x+(\na \psi\cdot X)\na \rho
\right]\\
&& +
\rho\left( \frac{y_{n+1}}{\psi^{n+1}}\widehat \nabla \psi -
y\right)
+
 \left( \frac{y_{n+1}}{\psi^{n+1}}\widehat \nabla \psi \cdot x-
y\cdot x\right)\na \rho\\
&=&
-\frac1{\na\psi\cdot Y} \left( \frac{y_{n+1}}{\psi^{n+1}}\widehat \nabla \psi \cdot y+
y_{n+1}^2\right)\left[
\rho \widehat \nabla \psi-\rho\frac{\psi^{n+1}}{y_{n+1}}x +(\na \psi\cdot X)\na \rho
\right]\\
&& +
\rho\left( \frac{y_{n+1}}{\psi^{n+1}}\widehat \nabla \psi -
y\right)
+
\left( \frac{y_{n+1}}{\psi^{n+1}}\widehat \nabla \psi \cdot x-
y\cdot x\right)\na \rho \\
&=&
-\frac{y_{n+1}}{\psi^{n+1}}
\left[
\rho \widehat \nabla \psi-\rho\frac{\psi^{n+1}}{y_{n+1}}x +(\na \psi\cdot X)\na \rho
\right]\\
&& +
\rho\left( \frac{y_{n+1}}{\psi^{n+1}}\widehat \nabla \psi -
y\right)
+
\na \rho \left( \frac{y_{n+1}}{\psi^{n+1}}\widehat \nabla \psi \cdot x-
y\cdot x\right)\\
&=&
\rho x-\rho y-\na \rho(X\cdot Y).
\end{eqnarray*}
Then 
$
\mu_0=\rho \id+x\otimes \na \rho+y\otimes \left[\rho x-\rho y-(X\cdot Y)\na \rho\right],
$
as required.
\end{proof}

\begin{remark}
From Proposition \ref{prop-Dz} it follows that $Dz=\mu_2(\mu_0+tD^2 y)$, where 
the matrix $\mu_0$ does not depend on the Hessian of $\rho$ thanks to Lemma \ref{lem-mu-0}.
Therefore, we expect that $Dy$ will contain $D^2\rho$.   
\end{remark}

\subsection{Formula for $Dy$}
In this section, we show the following result. We refer the reader to the list of notations in the introduction.
\begin{proposition}\label{prop:formula for Dy}
If the ray with direction $X$ is refracted by $\nu$ into the unit direction $Y=(y,y_{n+1})$ according to the Snell's law \eqref{eq:Snell} in $\R^{n+1}$, then
\begin{eqnarray*}
Dy
&=&
\left[\kappa-(\rho+(x\cdot \nabla \rho)b)\right]\id+\left[\alpha(\nabla\rho-( \rho+x\cdot\nabla \rho)x)-2b\,x\right]\otimes \nabla \rho+\\
&&+
\Big\{b(\id-x\otimes x)+\beta\left[\nabla \rho-(\rho+x\cdot\nabla \rho)x\right]\otimes\left[\nabla \rho-(x\cdot\nabla \rho)x\right]\Big\}D^2\rho.
\end{eqnarray*}

\end{proposition}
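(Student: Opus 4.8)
The plan is to first make the refraction law explicit, then differentiate it once and organize the result. Writing Snell's law \eqref{eq:Snell} as $Y=\kappa X-\kappa\lambda\nu$ with $\kappa=n_1/n_2$, I would use $|X|=1$ and $X\cdot(\nabla\rho,0)=x\cdot\nabla\rho$ in the normal formula \eqref{eq:unit normal} to obtain $X\cdot\nu=\rho/\sqrt a$, whence $1-\kappa^2\bigl(1-(X\cdot\nu)^2\bigr)=q^2/a$, so that $\lambda=(\kappa\rho-q)/(\kappa\sqrt a)$ and $\kappa\lambda=(\kappa\rho-q)/\sqrt a=b\sqrt a$. Substituting this back into $Y=\kappa X-\kappa\lambda\nu$ and reading off the first $n$ coordinates gives the closed form
\[
y=\kappa x+b\,v,\qquad v:=\nabla\rho-(\rho+x\cdot\nabla\rho)x.
\]

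Next I would differentiate this identity. Since $\partial_{x_j}y_i=\kappa\delta_{ij}+v_i\,\partial_{x_j}b+b\,\partial_{x_j}v_i$, in matrix notation $Dy=\kappa\,\id+v\otimes\nabla b+b\,Dv$. Differentiating $v$ directly, using $\partial_{x_j}(x\cdot\nabla\rho)=\partial_{x_j}\rho+(D^2\rho\,x)_j$, the symmetry of $D^2\rho$, and the identity $\xi\otimes(A\eta)=(\xi\otimes\eta)A^t$ from \eqref{matrix-vector}, gives $Dv=(\id-x\otimes x)D^2\rho-2\,x\otimes\nabla\rho-(\rho+x\cdot\nabla\rho)\id$. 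The only remaining ingredient is $\nabla b$: from $a=\rho^2+|\nabla\rho|^2-(x\cdot\nabla\rho)^2$ I would compute $\nabla a=2(\rho-x\cdot\nabla\rho)\nabla\rho+2\,D^2\rho\,w$ with $w:=\nabla\rho-(x\cdot\nabla\rho)x$, and then differentiate the two defining relations $ab=\kappa\rho-q$ and $q^2=(1-\kappa^2)a+\kappa^2\rho^2$, eliminate $\nabla q$, and express $\nabla b$ as an explicit scalar combination of $\nabla\rho$ and $\nabla a$, hence of $\nabla\rho$ and $D^2\rho\,w$.

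Finally I would substitute the formulas for $Dv$ and $\nabla b$ into $Dy=\kappa\,\id+v\otimes\nabla b+b\,Dv$ and sort the terms, using \eqref{matrix-vector} once more to pull $D^2\rho$ to the right: the Hessian terms assemble into $\{b(\id-x\otimes x)+\beta\,v\otimes w\}D^2\rho$, and the Hessian-free terms into the stated combination of $\id$, $x\otimes\nabla\rho$ and $v\otimes\nabla\rho$, with coefficients $\kappa-b(\rho+x\cdot\nabla\rho)$, $-2b$ and $\alpha$ respectively; here $\alpha,\beta$ are as in the list of notations and $v,w$ are exactly the bracketed vectors appearing there.

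The step I expect to be the real obstacle is this last reorganization, specifically matching the raw scalar coefficients that come out of $\nabla b$ with the prepackaged $\alpha$ and $\beta$, since as produced by the computation they look rather different. The reconciliation rests on the single algebraic identity $b(\kappa\rho+q)=\kappa^2-1$, equivalently $(1-\kappa^2)+2qb=-ab^2$, which is just a rearrangement of $q^2=(1-\kappa^2)a+\kappa^2\rho^2$ together with $ab=\kappa\rho-q$. One use of it collapses the coefficient of $v\otimes w$ to $\beta=b^2/q$, a second use collapses the coefficient of $v\otimes\nabla\rho$ to $\alpha$, and the rest is bookkeeping — the one point to watch being that both $x\cdot\nabla\rho$ and $a$ contribute a Hessian term when differentiated.
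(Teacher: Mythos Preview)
Your proposal is correct and follows essentially the same route as the paper: derive $y=\kappa x+bv$ from Snell's law (the paper's Lemma~\ref{lm:formula for Y}), compute $\nabla b=\alpha\nabla\rho+\beta D^2\rho\,w$ (the paper's Lemma~\ref{nabla-b}), then differentiate $y$ and regroup. The only tactical difference is that the paper obtains $\nabla b$ via the logarithmic derivative of $b=(\kappa^2-1)/(\kappa\rho+q)$, whereas you differentiate the pair $ab=\kappa\rho-q$, $q^2=(1-\kappa^2)a+\kappa^2\rho^2$ and invoke $b(\kappa\rho+q)=\kappa^2-1$ to reconcile; both arrive at the same $\alpha,\beta$ with comparable effort.
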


We first derive a formula for $Y=(y,y_{n+1})$.
\begin{lemma}\label{lm:formula for Y}
The ray with direction $X$ is refracted by $\Sigma$ into the direction $Y$ such that
\begin{equation*}
Y=\kappa\, X+ b\left((\nabla \rho,0)-X(\rho+x\cdot \nabla\rho)\right).
\end{equation*}
\end{lemma}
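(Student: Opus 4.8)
The plan is to solve Snell's law \eqref{eq:Snell} explicitly for $Y$ and then to identify the resulting scalar coefficient with $b$, using only the formula \eqref{eq:unit normal} for $\nu$ and the constraint $|X|=1$.

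First I would rewrite \eqref{eq:Snell} as $Y=\kappa\,X-\kappa\lambda\,\nu$, which is legitimate since $n_2/n_1=1/\kappa$. Substituting \eqref{eq:unit normal} and recalling $a=\rho^2+|\na\rho|^2-(x\cdot\na\rho)^2$, this becomes
\[
Y=\kappa\,X+\frac{\kappa\lambda}{\sqrt a}\Big((\na\rho,0)-X(\rho+x\cdot\na\rho)\Big),
\]
so the lemma is reduced to the single scalar identity $\kappa\lambda/\sqrt a=b$.

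Next I would compute $X\cdot\nu$. Since $X=(x,x_{n+1})$ lies on $\S^n$ we have $|X|^2=1$ and $X\cdot(\na\rho,0)=x\cdot\na\rho$, so \eqref{eq:unit normal} gives
\[
X\cdot\nu=-\frac{x\cdot\na\rho-(\rho+x\cdot\na\rho)}{\sqrt a}=\frac{\rho}{\sqrt a},
\qquad\text{so that}\qquad 1-(X\cdot\nu)^2=\frac{a-\rho^2}{a}.
\]
Inserting this into the expression for $\lambda$ recorded just after \eqref{eq:Snell}, namely $\lambda=X\cdot\nu-\tfrac1\kappa\sqrt{1-\kappa^2\big(1-(X\cdot\nu)^2\big)}$, and writing $q=\sqrt{a-\kappa^2(a-\rho^2)}$ as in the list of notation, I obtain
\[
\lambda=\frac{\rho}{\sqrt a}-\frac1\kappa\sqrt{1-\kappa^2\,\frac{a-\rho^2}{a}}=\frac{\rho}{\sqrt a}-\frac{q}{\kappa\sqrt a}=\frac{\kappa\rho-q}{\kappa\sqrt a}.
\]
Hence $\kappa\lambda/\sqrt a=(\kappa\rho-q)/a=b$ by the definition of $b$, and substituting back into the displayed expression for $Y$ gives $Y=\kappa X+b\big((\na\rho,0)-X(\rho+x\cdot\na\rho)\big)$, as claimed.

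This is a short algebraic manipulation, so I do not anticipate any genuine obstacle; the only point deserving care is that the branch of the square root in $\lambda$ (equivalently, in the definition of $b$) be the one corresponding to a ray actually transmitted into medium $n_2$, which is exactly the sign convention of \cite{Gut-Hua-2} and is compatible with hypotheses $\H1$--$\H3$.
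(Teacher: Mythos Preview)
Your proof is correct and follows essentially the same route as the paper: both compute $X\cdot\nu=\rho/\sqrt a$ from \eqref{eq:unit normal}, insert this into Snell's law to obtain $Y=\kappa X+\dfrac{\kappa\rho-q}{a}\big((\nabla\rho,0)-X(\rho+x\cdot\nabla\rho)\big)$, and recognize the scalar coefficient as $b$. The only cosmetic difference is that you isolate $\lambda$ as an intermediate quantity, whereas the paper expands the Snell formula for $Y$ directly.
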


\begin{proof}
From Snell's law \eqref{eq:Snell}, we have
\begin{equation*}
Y=\frac{n_1}{n_2} X+\left(-\frac{n_1}{n_2}(\nu\cdot X)+\sqrt{1-\left(\frac{n_1}{n_2}\right)^2(1-(\nu\cdot X)^2)}\right)\nu=(y,y_{n+1}).
\end{equation*}
From \eqref{eq:unit normal}
$X\cdot \nu=\frac\rho{\sqrt a}$, hence
\begin{align*}
Y&=\kappa\, X-\left(\kappa(\nu\cdot X)-\sqrt{1-\kappa^2(1-(\nu\cdot X)^2)}\right)\nu\\
&=\kappa\, X+\left(\kappa \dfrac{\rho}{\sqrt{a}}-\sqrt{1-\kappa^2\left(1-\left(\dfrac{\rho}{\sqrt{a}}\right)^2\right)}\right)\dfrac{(\nabla \rho,0)-X(\rho+x\cdot \nabla \rho)}{\sqrt{a}}\\
&=\kappa\, X+\dfrac{1}{a}\left(\kappa \rho-\sqrt{a-\kappa^2(a-\rho^2)}\right)\left[(\nabla \rho,0)-(\rho+x\cdot \nabla\rho)X\right].
\end{align*}

\end{proof}

We next calculate $\nabla b$.
\begin{lemma}\label{nabla-b}
Using the notations of Proposition \ref{prop:formula for Dy}
$$\nabla b= \alpha \nabla \rho+\beta\, D^2\rho\, (\nabla \rho-(x\cdot \nabla \rho)x).$$
\end{lemma}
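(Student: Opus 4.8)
The plan is to reduce the computation to the gradient of the auxiliary scalar $a=\rho^2+|\nabla\rho|^2-(x\cdot\nabla\rho)^2$. Differentiating directly, and using $\partial_k(|\nabla\rho|^2)=2(D^2\rho\,\nabla\rho)_k$ and $\partial_k(x\cdot\nabla\rho)=\partial_k\rho+(D^2\rho\,x)_k$, one gets
\begin{equation*}
\nabla a = 2(\rho-x\cdot\nabla\rho)\nabla\rho+2\,D^2\rho\,\big(\nabla\rho-(x\cdot\nabla\rho)x\big).
\end{equation*}
Since $q^2=a-\kappa^2(a-\rho^2)=(1-\kappa^2)a+\kappa^2\rho^2$, this also yields $\nabla q=\frac{1}{2q}\big((1-\kappa^2)\nabla a+2\kappa^2\rho\nabla\rho\big)$. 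Differentiating $b=(\kappa\rho-q)/a$ by the quotient rule and using $\kappa\rho-q=ab$ to handle the term carrying $\nabla a$ in the numerator, I obtain
\begin{equation*}
\nabla b=\frac{\kappa\nabla\rho-\nabla q}{a}-\frac{b}{a}\nabla a=\Big(\frac{\kappa}{a}-\frac{\kappa^2\rho}{qa}\Big)\nabla\rho-\Big(\frac{1-\kappa^2}{2qa}+\frac{b}{a}\Big)\nabla a.
\end{equation*}

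Next I would simplify the two scalar coefficients. The coefficient of $\nabla\rho$ equals $\frac{\kappa(q-\kappa\rho)}{aq}=-\frac{\kappa b}{q}$, since $q-\kappa\rho=-ab$. For the coefficient of $\nabla a$, clearing denominators in $\frac{1-\kappa^2}{2q}+b$ and substituting $(1-\kappa^2)a=q^2-\kappa^2\rho^2$ turns its numerator into $-(q-\kappa\rho)^2=-a^2b^2$, so the whole coefficient collapses to $\frac{b^2}{2q}=\frac{\beta}{2}$. Thus $\nabla b=-\frac{\kappa b}{q}\nabla\rho+\frac{\beta}{2}\nabla a$, and plugging in the formula for $\nabla a$ from the first paragraph produces exactly the Hessian term $\beta\,D^2\rho\,(\nabla\rho-(x\cdot\nabla\rho)x)$ together with the scalar multiple $\big(-\frac{\kappa b}{q}+\beta(\rho-x\cdot\nabla\rho)\big)\nabla\rho$ of $\nabla\rho$.

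The last step is to identify $-\frac{\kappa b}{q}+\beta(\rho-x\cdot\nabla\rho)$ with $\alpha$. Writing $\beta=b^2/q$, multiplying through by $q$ and then by $\kappa^2-1$, and cancelling the $x\cdot\nabla\rho$ contributions, this reduces to the scalar identity $b(\kappa\rho+q)=\kappa^2-1$; and this holds because $b(\kappa\rho+q)=\frac{(\kappa\rho-q)(\kappa\rho+q)}{a}=\frac{\kappa^2\rho^2-q^2}{a}=\frac{(\kappa^2-1)a}{a}=\kappa^2-1$, again via $q^2=(1-\kappa^2)a+\kappa^2\rho^2$ (here $a\ge\rho^2>0$ and $\kappa\ne1$, so all divisions are legitimate). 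Reorganizing the coefficient with this relation gives precisely $-\frac{b^2}{\kappa^2-1}\cdot\frac{\kappa q+\rho+(\kappa^2-1)x\cdot\nabla\rho}{q}=\alpha$, completing the proof. The only real difficulty here is bookkeeping: juggling the three normalizations $a$, $q$, $b$ and systematically reusing $q^2=(1-\kappa^2)a+\kappa^2\rho^2$ and $\kappa\rho-q=ab$; no idea beyond careful algebra is needed.
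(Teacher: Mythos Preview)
Your proof is correct and is essentially the same direct computation as the paper's: both first compute $\nabla a$, then differentiate $b$ and simplify using the relation $q^2=(1-\kappa^2)a+\kappa^2\rho^2$. The only difference is cosmetic: the paper first rationalizes $b=\dfrac{\kappa^2-1}{\kappa\rho+q}$ and then takes the logarithmic derivative $\dfrac{\nabla b}{b}=-\dfrac{\kappa\nabla\rho+\nabla q}{\kappa\rho+q}$, which shortens the subsequent algebra a little, whereas you apply the quotient rule to $b=(\kappa\rho-q)/a$ and recover the same identity $b(\kappa\rho+q)=\kappa^2-1$ at the end.
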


\begin{proof}
The proof follows from a straightforward computation. We have
\begin{align*}
\dfrac{\p a}{\p x_j}
&=
2\left[\rho \dfrac{\p \rho}{\p x_j}+\sum_{k=1}^n\dfrac{\p^2\rho}{\p x_j\p x_k}\dfrac{\p \rho}{\p x_k}-(x\cdot \nabla \rho)\left(\dfrac{\p\rho}{\p x_j}+\sum_{k=1}^n \dfrac{\p^2 \rho}{\p x_j\p x_k}x_k\right)\right]\\
&=
2(\rho-x\cdot\nabla \rho)\dfrac{\p \rho}{\p x_j}+2\sum_{k=1}^n\dfrac{\p^2\rho}{\p x_j\p x_k}\left(\dfrac{\p \rho}{\p x_k}-(x\cdot \nabla \rho)x_k\right).
\end{align*}
Then 
\begin{equation}\label{nabla-a}
\nabla a=2(\rho-x\cdot\nabla \rho)\nabla \rho+2D^2\rho\,(\nabla \rho-(x\cdot\nabla \rho)x).
\end{equation}

From the formula of $b$ and $q$, notice that
\begin{align}\label{def-b}
b&=\dfrac{1}{a}\left(\kappa \rho-\sqrt{a-\kappa^2(a-\rho^2)}\right)
=
\frac1a \frac{\kappa^2\rho^2-\kappa^2\rho^2-a(1-\kappa^2)}{\kappa \rho+\sqrt{\kappa^2\rho^2+a(1-\kappa^2)}}\\
&=
\frac{\kappa^2-1}{\kappa \rho+\sqrt{\kappa^2\rho^2+a(1-\kappa^2)}}\\
&=
\frac{\kappa^2-1}{\kappa\rho+q}.\nonumber
\end{align}
Applying the logarithmic derivative yields
\begin{eqnarray*}
\frac{\na b}b
&=&
-\frac{\kappa\na \rho+\na q}{\kappa\rho+q}\\
&=&
-\frac1{\kappa\rho+q}\left[\kappa\na \rho +\frac{2\kappa^2\rho\na \rho+(1-\kappa^2)\na a}{2q}\right].
\end{eqnarray*}
Substituting the formula for $\na a$ from \eqref{nabla-a}, we get 

\begin{eqnarray*}
\na b
&=&
 -\frac b{\kappa\rho+q} \left[\kappa\na \rho +\frac{2\kappa^2\rho\na \rho+(1-\kappa^2)[2(\rho-x\cdot\nabla \rho)\nabla \rho
+ 2D^2\rho\,(\nabla \rho-(x\cdot\nabla \rho)x)]}{2q}\right]\\
&=&
- \frac b{\kappa\rho+q} \left[\left(\kappa+\frac1{q}[\kappa^2\rho+(1-\kappa^2)(\rho-x\cdot \na \rho)]\right)\na \rho+\frac{1-\kappa^2}q D^2\rho(\na\rho-(x\cdot\na\rho)x)\right]\\
&=&
- \frac b{\kappa\rho+q} \left[\kappa+\frac1{q}[\kappa^2\rho+(1-\kappa^2)(\rho-x\cdot \na \rho)]\right]\na \rho
+\frac{b(\kappa^2-1)}{q(\kappa\rho+q)}D^2\rho\left(\na\rho-(x\cdot\na\rho)x\right)\\
&=&
-\dfrac{b^2}{\kappa^2-1}\left[\dfrac{\kappa q+\rho+(\kappa^2-1)x\cdot\nabla \rho}{q}\right]\na\rho+\dfrac{b^2}{q} D^2\rho\left[\na\rho-(x\cdot\na\rho)x\right]
\end{eqnarray*}
and the proof of the Lemma  follows.
\end{proof}

We are now ready to derive the formula for $Dy$ in Proposition \ref{prop:formula for Dy}.
\begin{proof}[{\bf Proof of Proposition \ref{prop:formula for Dy}}]
Writing $y=(y_1,\cdots,y_n)$, we have from Lemma \ref{lm:formula for Y}
$$y_i=\kappa \,x_i+b\left(\dfrac{\p \rho}{\p x_i}-x_i(\rho+x\cdot \nabla \rho)\right)=\kappa \, x_i+b\left(\dfrac{\p \rho}{\p x_i}-x_i\left(\rho+\sum_{k=1}^n \dfrac{\p \rho}{\p x_k}x_k\right)\right), \quad i=1, \dots, n.
$$
Differentiating in $x_j$ we get 
\begin{align*}
\dfrac{\p y_i}{\p x_j}&= \kappa \,\delta_{ij}+b\left[\dfrac{\p^2 \rho}{\p x_j\p x_i}-\delta_{ij}\left(\rho+\sum_{k=1}^n \dfrac{\p \rho}{\p x_k}x_k\right)-x_i\left(\dfrac{\p \rho }{\p x_j}+\sum_{k=1}^n \dfrac{\p\rho}{\p x_k}\delta_{jk}+\sum_{k=1}^n\dfrac{\p^2\rho}{\p x_j\p x_k}x_k\right)\right]\\
&\qquad\,\,\,+\dfrac{\p b}{\p x_j}\left(\dfrac{\p \rho}{\p x_i}-x_i\left(\rho+\sum_{k=1}^n \dfrac{\p \rho}{\p x_k}x_k\right)\right)\\
&=\kappa \,\delta_{ij}+b\left[\dfrac{\p^2 \rho}{\p x_j\p x_i}-\delta_{ij}\left(\rho+x\cdot \nabla \rho\right)-x_i\left(2\dfrac{\p \rho}{\p x_j}+\sum_{k=1}^n\dfrac{\p^2\rho}{\p x_j\p x_k}x_k\right)\right] +\dfrac{\p b}{\p x_j}\left(\dfrac{\p \rho}{\p x_i}-x_i\left(\rho+x\cdot \nabla \rho \right)\right).
\end{align*}
Consequently 
\begin{align*}
Dy&=\kappa\, \id+b \left(D^2\rho-\id(\rho+x\cdot \nabla\rho)-x\otimes\left(2 \nabla \rho+(D^2\rho) x\right)\right)+\left(\nabla \rho-x(\rho+x\cdot \nabla \rho)\right)\otimes \nabla b.
\end{align*}

Using property \eqref{matrix-vector} of tensor product, 
\begin{eqnarray*}
Dy
&=&
\kappa \, \id -(\rho+x\cdot \nabla \rho)(b\, \id +x\otimes \nabla b)+b(\id-x\otimes x)D^2\rho+\nabla \rho \otimes \nabla b-2b\, x\otimes \nabla \rho.
\end{eqnarray*}
In view of Lemma \ref{nabla-b}, we isolate the Hessian of $\rho$ to get
\begin{eqnarray*}
Dy
&=&
\kappa \, \id-(\rho+x\cdot \nabla \rho)\left(b\,\id+x\otimes \left[\alpha \nabla \rho +\beta D^2\rho(\nabla\rho-(x\cdot\nabla \rho)x\right)\right]+\\
&&+
b(\id-x\otimes x)D^2\rho+\nabla \rho\otimes \left[\alpha \nabla \rho +\beta D^2\rho(\nabla\rho-(x\cdot\nabla \rho)x)\right] -2b \,x\otimes \nabla \rho.
\end{eqnarray*}
Regrouping the terms, and using  \eqref{matrix-vector} again we get 
\begin{eqnarray*}
Dy
&=&
\left[\kappa-(\rho+(x\cdot \nabla \rho)b)\right]\id+\left[\alpha(\nabla\rho-( \rho+x\cdot\nabla \rho)x)-2b\,x\right]\otimes \nabla \rho+\\
&&+
\Big\{b(\id-x\otimes x)+\beta\left[\nabla \rho-(\rho+x\cdot\nabla \rho)x\right]\otimes\left[\nabla \rho-(x\cdot\nabla \rho)x\right]\Big\}D^2\rho.
\end{eqnarray*}

\end{proof}

\subsection{Proof of Theorem \ref{thm-1}}

Plugging the formula of $Dy$ in \eqref{eq:Dz form 1} we get 

\begin{eqnarray*}
Dz&=&\mu_2\Bigg[\mu_0+
t\bigg\{
\left[\kappa-(\rho+(x\cdot \nabla \rho)b)\right]\id+\left[\alpha(\nabla\rho-( \rho+x\cdot\nabla \rho)x)-2b\,x\right]\otimes \nabla \rho\bigg\}\\
&&\hspace{2cm}+
t\big\{b(\id-x\otimes x)+\beta\left[\nabla \rho-(\rho+x\cdot\nabla \rho)x\right]\otimes\left[\nabla \rho-(x\cdot\nabla \rho)x\right]\big\} D^2\rho\nonumber
\Bigg].
\end{eqnarray*}
Applying  \eqref{eq:Det mu2}
\begin{equation}\label{eq:simplified Dz}
\det\, Dz
=
 \dfrac{1}{\nabla \psi \cdot Y}  \dfrac{{ \psi^{n+1}}}{y_{n+1}}
\det \Bigg[
\mu_0+
t\bigg\{
(\kappa-b(\rho+x\cdot \na \rho))\id+\big\{\alpha(\na \rho-x(\rho+x\cdot \na \rho))-2bx\big\}\otimes \na \rho\bigg\}
+t \mathcal MD^2\rho\Bigg],
\end{equation}
with
 \begin{equation}\label{eq:Formula for M}
 \mathcal M= b(\id-x\otimes x)+\beta\left[\nabla \rho-(\rho+x\cdot\nabla \rho)x\right]\otimes\left[\nabla \rho-(x\cdot\nabla \rho)x\right].
 \end{equation}
It remains to show that $\mathcal M$ is invertible, compute its inverse of $\mathcal M$ and multiply the Hessian free matrix by it.

First we prove 
\begin{lemma}\label{lem-M-inverse}
We have  
\[
\det\, \mathcal M= \left(1+\frac bq[|\na\rho|^2-(\na\rho\cdot x)(\rho+\na\rho\cdot x)]\right)b^nx_{n+1}.
\]
Moreover, in a neighborhood of $x=0$, $\mathcal M$ is invertible and
\[
\mathcal M^{-1}
=
\dfrac{1}{b}\left\{Id+\dfrac{x\otimes x}{x_{n+1}^2}-Q\left(\na \rho -\frac{\rho}{x_{n+1}}x\right)\otimes\nabla \rho\right\}.
\]
\end{lemma}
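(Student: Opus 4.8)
The plan is to view $\mathcal M$ as a rank-one perturbation of the easily invertible matrix $\mathcal D:=b(\id-x\otimes x)$ and apply the Sherman--Morrison identities \eqref{Sherman Det} and \eqref{Sherman}. Set $v_1:=\nabla\rho-(\rho+x\cdot\nabla\rho)x$ and $v_2:=\nabla\rho-(x\cdot\nabla\rho)x$, so that \eqref{eq:Formula for M} reads $\mathcal M=\mathcal D+\xi\otimes\eta$ with $\xi:=\beta\,v_1$ and $\eta:=v_2$. A direct application of \eqref{Sherman} to $\id-x\otimes x$ (or the eigenvalue computation: eigenvalue $1-|x|^2$ along $x$, eigenvalue $1$ on $x^\perp$) gives $(\id-x\otimes x)^{-1}=\id+\frac{x\otimes x}{1-|x|^2}$ and $\det(\id-x\otimes x)=1-|x|^2=x_{n+1}^2$; hence $\mathcal D^{-1}=\frac1b\bigl(\id+\frac{x\otimes x}{x_{n+1}^2}\bigr)$ and $\det\mathcal D=b^n x_{n+1}^2$.

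The only computation of substance is to evaluate $\mathcal D^{-1}\eta$, $\mathcal D^{-1}\xi$ and the scalar $\xi\cdot\mathcal D^{-1}\eta$. From $\mathcal D^{-1}v=\frac1b\bigl(v+\frac{(v\cdot x)x}{x_{n+1}^2}\bigr)$ together with the elementary identities $v_2\cdot x=(x\cdot\nabla\rho)x_{n+1}^2$ and $v_1\cdot x=x\cdot\nabla\rho-(\rho+x\cdot\nabla\rho)|x|^2$ (and $|x|^2=1-x_{n+1}^2$), one collapses the coefficient of $x$ and obtains $\mathcal D^{-1}\eta=\frac1b\nabla\rho$ and $\mathcal D^{-1}\xi=\frac{\beta}{b}\bigl(\nabla\rho-\frac{\rho}{x_{n+1}^2}x\bigr)$. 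Hence $\xi\cdot\mathcal D^{-1}\eta=\beta\,v_1\cdot\tfrac1b\nabla\rho=\frac{\beta}{b}\bigl(|\nabla\rho|^2-(\nabla\rho\cdot x)(\rho+\nabla\rho\cdot x)\bigr)$, and since $\beta=b^2/q$ this equals $\frac bq\bigl(|\nabla\rho|^2-(\nabla\rho\cdot x)(\rho+\nabla\rho\cdot x)\bigr)$. Now \eqref{Sherman Det} yields the determinant formula, and \eqref{Sherman}, which reads $\mathcal M^{-1}=\mathcal D^{-1}-\frac{(\mathcal D^{-1}\xi)\otimes(\mathcal D^{-1}\eta)}{1+\xi\cdot\mathcal D^{-1}\eta}$ (using symmetry of $\mathcal D$ so that $\eta^t\mathcal D^{-1}=(\mathcal D^{-1}\eta)^t$), produces the rank-one term $\frac{\beta}{b^2}\bigl(\nabla\rho-\frac{\rho}{x_{n+1}^2}x\bigr)\otimes\nabla\rho$ divided by $1+\frac bq(|\nabla\rho|^2-(\nabla\rho\cdot x)(\rho+\nabla\rho\cdot x))$; factoring $\frac1b$ out and using $\beta/b=b/q$ collapses the scalar coefficient to exactly $Q$, giving the claimed expression for $\mathcal M^{-1}$.

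For invertibility near $x=0$, I would evaluate the determinant there: at $x=0$ we have $x_{n+1}=1$, $x\cdot\nabla\rho=0$ and $q^2=\rho^2+(1-\kappa^2)|\nabla\rho|^2$, so $1+\frac bq|\nabla\rho|^2=\frac{q(\kappa\rho+q)+(\kappa^2-1)|\nabla\rho|^2}{q(\kappa\rho+q)}=\frac{\rho(\rho+\kappa q)}{q(\kappa\rho+q)}>0$; since $\kappa\neq1$ forces $b\neq0$, this gives $\det\mathcal M(0)\neq0$, and as $x\mapsto\det\mathcal M(x)$ is continuous (all ingredients are smooth in $x$ for $\rho\in C^2$), $\mathcal M$ is invertible on a neighborhood of $0$, where the inverse formula holds. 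The only real obstacle is the bookkeeping in the middle paragraph — carrying $\nabla\rho$ and $x$ through the rank-one inverse while repeatedly replacing $|x|^2$ by $1-x_{n+1}^2$ and recognizing the buried identity $q^2=\rho^2+(1-\kappa^2)\bigl(|\nabla\rho|^2-(\nabla\rho\cdot x)^2\bigr)$; conceptually there is nothing beyond the Sherman--Morrison splitting $\mathcal M=\mathcal D+\beta\,v_1\otimes v_2$.
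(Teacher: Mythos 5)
Your argument is correct and follows essentially the same route as the paper: identify $\mathcal M = b(\id - x\otimes x) + \beta\, v_1\otimes v_2$ with $v_1 = \nabla\rho - (\rho + x\cdot\nabla\rho)x$ and $v_2 = \nabla\rho - (x\cdot\nabla\rho)x$, then apply the Sherman--Morrison determinant and inversion identities to this diagonal-plus-rank-one splitting. Your bookkeeping is somewhat cleaner in that you evaluate the vectors $\mathcal D^{-1}\eta = \frac1b\nabla\rho$ and $\mathcal D^{-1}\xi = \frac{\beta}{b}\bigl(\nabla\rho - \frac{\rho}{x_{n+1}^2}x\bigr)$ individually and only then form the rank-one term, rather than pushing the full tensor-product expression through $\mathcal D^{-1}(\cdots)\mathcal D^{-1}$ and simplifying, but this is cosmetic. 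Two remarks for reconciling with the printed lemma: the factor $b^n x_{n+1}$ and the vector $\nabla\rho - \frac{\rho}{x_{n+1}}x$ appearing in the statement are typos for $b^n x_{n+1}^2$ and $\nabla\rho - \frac{\rho}{x_{n+1}^2}x$, and the body of the paper's proof actually agrees with what you wrote (the notation list likewise has $|\nabla q|^2$ where $|\nabla\rho|^2$ is meant in the formula for $Q$). More substantively, your explicit evaluation $1 + \frac bq|\nabla\rho|^2 = \frac{\rho(\rho + \kappa q)}{q(\kappa\rho + q)} > 0$ at $x=0$ is correct and works uniformly in $\kappa$; the paper instead asserts $\frac bq|\nabla\rho|^2 < -1$ when $\kappa<1$, which is a sign error (the stated bound on the denominator actually yields $\frac bq|\nabla\rho|^2 > -1$). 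Both readings give $\det\mathcal M(0)\neq 0$, which is all that is needed, but your formula is the accurate one.
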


\begin{proof}
Let $\A=b(\id -x\otimes x)$ then from \eqref{Sherman} $\det \,\A=b^n(x_{n+1}^2)=b^nx_{n+1}\neq 0$ since $x_{n+1}>0$ and $\kappa\neq 1$ then $\A$ is invertible and 
$$\A^{-1}=\dfrac{1}{b}\left(\id +\dfrac{x\otimes x}{x_{n+1}^2}\right).$$

Denote $\xi=\beta(\na \rho -(\rho+\na \rho\cdot x)x)$ and $\eta=\na \rho -(\na \rho\cdot x)x$. We calculate $1+\eta\cdot \A^{-1}\xi$. 
Note that $\A$ is symmetric. Thus  
\begin{eqnarray*}
\eta\cdot\A^{-1}\xi
&=&
\xi\cdot\A^{-1}\eta=
\frac1b\left\{\xi\cdot \eta+\frac1{x_{n+1}^2}(\xi\cdot x)(x\cdot  \eta)\right\}.
\end{eqnarray*}
Substituting $\eta, \xi$ into the last formula and simplifying the 
resulted expression we obtain
\begin{eqnarray*}
1+\eta\cdot\A^{-1}\xi
&=&
1+\frac \beta b\left[ (\na \rho -x(\rho+\na \rho\cdot x))\cdot (\na \rho -x(\na \rho\cdot x))+\frac1{x_{n+1}^2}
(\na \rho\cdot x -|x|^2(\rho+\na \rho\cdot x)) (\na \rho\cdot x -|x|^2(\na \rho\cdot x))\right]\\
&=&
1+\frac b q\bigg[ (\na \rho -x(\rho+\na \rho\cdot x))\cdot (\na \rho -x(\na \rho\cdot x))+
(\na \rho\cdot x -|x|^2(\rho+\na \rho\cdot x)) (\na \rho\cdot x )\bigg]\\
&=&
1+\frac b q\bigg[ |\na\rho |^2-(\na \rho\cdot x)^2 -\na \rho \cdot x(\rho+\na \rho\cdot x)+
|x|^2\na \rho \cdot x (\rho+\na \rho\cdot x)+
(\na \rho\cdot x -|x|^2(\rho+\na \rho\cdot x)) (\na \rho\cdot x )
\bigg]\\
&=&
1+\frac b q\bigg[ |\na\rho |^2 -\na \rho \cdot x(\rho+\na \rho\cdot x)
\bigg].\\
\end{eqnarray*}
 Combining these computations we get 
\begin{eqnarray*}
\det \, \mathcal M
&=&\det \A \,\det (\id+\eta\otimes \A^{-1}\xi)=
\left(1+\frac bq[|\na\rho|^2-(\na\rho\cdot x)(\rho+\na\rho\cdot x)]\right)b^nx_{n+1}
.
\end{eqnarray*}
Notice that if $\kappa>1$ then $b>0$ and clearly $\det \mathcal M> 0$. 

If $\kappa<1$ which is the case we will study later for regularity, we have at $x=0$ using \eqref{def-b}
\begin{align*}
1+\eta\cdot\A^{-1}\xi &=1+\frac bq|\nabla \rho|^2\\
&=1+\dfrac{\kappa^2-1}{\kappa \rho q+q^2}|\nabla\rho|^2\\
&=1+\dfrac{\kappa^2-1}{\kappa \rho q+(1-\kappa^2)a+\kappa^2 \rho^2}|\nabla \rho|^2\\
&=1+\dfrac{\kappa^2-1}{\kappa\rho q+(1-\kappa^2)(\rho^2+|\nabla \rho|^2)}|\nabla \rho|^2.
\end{align*}
Since $\kappa<1$ then the denominator is larger than $(1-\kappa^2)|\nabla \rho|^2$, and hence 
$\dfrac{b}{q}|\nabla \rho|^2<-1$ and so $1+\eta\cdot\A^{-1}\xi<0$, and $\mathcal M$ is invertible at $x=0$.
Therefore by continuity $\mathcal M$ is invertible in a neighborhood of $x=0$. 

We next calculate $\mathcal M^{-1}$. 
We have
\begin{eqnarray*}
\A^{-1}\xi\otimes\eta \A^{-1}
&=&
\frac1b \left[\id +\frac{x\otimes x}{x_{n+1}^2}\right]\beta(\na \rho -(\rho+\na \rho\cdot x)x)\otimes
(\na \rho -(\na \rho\cdot x)x)\frac1b \left[\id +\frac{x\otimes x}{x_{n+1}^2}\right]\\
&=&
\frac \beta{b^2} \left[\id +\frac{x\otimes x}{x_{n+1}^2}\right] (\na \rho -(\rho+\na \rho\cdot x)x)\otimes
(\na \rho -x(\na \rho\cdot x)) \left[\id +\frac{x\otimes x}{x_{n+1}^2}\right].
\end{eqnarray*}

We first compute the vector $\left[\id +\frac{x\otimes x}{x_{n+1}^2}\right]\xi$
and then take its tensor product with $\eta \left[\id +\frac{x\otimes x}{x_{n+1}^2}\right]$
and then simplify the resulted expression in order to get  
\begin{eqnarray*}
\A^{-1}\xi\otimes\eta \A^{-1}
&=&
\frac \beta{b^2} 
\left[\na \rho -(\rho+\na \rho\cdot x)x +\frac{\na \rho\cdot x-|x|^2(\rho+\na \rho\cdot x)}{x_{n+1}}x\right] 
\otimes
(\na \rho -(\na \rho\cdot x)x) \left[\id +\frac{x\otimes x}{x_{n+1}^2}\right]\\
&=&
\frac 1{q} 
\left[\na \rho -\rho\, x -\frac{|x|^2\rho}{x_{n+1}^2}x\right] 
\otimes
 \left[\na \rho -(\na \rho\cdot x)x +\frac{\na\rho\cdot x-|x|^2(\na \rho\cdot x)}{x_{n+1}^2}x\right]\\
 &=&
\frac 1 {q} \left(\na \rho -\frac{\rho}{x_{n+1}^2}x\right)
\otimes
 \na \rho.
\end{eqnarray*}

Combining our obtained calculation we get
\begin{align*}
\mathcal M^{-1}&=\dfrac{1}{b}\left(\id +\dfrac{x\otimes x}{x_{n+1}^2}\right)-\dfrac{\frac 1 {q} }{1+\dfrac{b}{q}\left(|\nabla q|^2-\nabla\rho\cdot x(\rho+\nabla \rho\cdot x)\right)}\left(\na \rho -\frac{\rho}{x_{n+1}^2}x\right)\otimes\nabla \rho\\
&=\dfrac{1}{b}\left\{\id +\dfrac{x\otimes x}{x_{n+1}^2}-\dfrac{b}{q+b\left(|\nabla q|^2-\nabla\rho\cdot x(\rho+\nabla \rho\cdot x)\right)}\left(\na \rho -\frac{\rho}{x_{n+1}^2}x\right)\otimes\nabla \rho\right\}.
\end{align*}
\end{proof}

\begin{remark}\label{formula second for Q}\rm
Notice that from \eqref{def-b}, and the formula for $Q$ we have
\begin{align*}
Q&=\dfrac{\dfrac{\kappa^2-1}{\kappa \rho+q}}{q+\dfrac{\kappa^2-1}{\kappa \rho+q}\left[|\nabla \rho|^2-(\nabla \rho\cdot x)(\rho+(\nabla \rho\cdot x))\right]}\\
&=\dfrac{\kappa^2-1}{\kappa \rho q+q^2+(\kappa^2-1)\left[|\nabla \rho|^2-(\nabla \rho\cdot x)(\rho+(\nabla \rho\cdot x))\right]}\\
&=\dfrac{\kappa^2-1}{\kappa \rho q+(1-\kappa^2)[\rho^2+|\nabla \rho|^2-(x\cdot \nabla \rho)^2]+(\kappa^2-1)\left[|\nabla \rho|^2-(\nabla \rho\cdot x)(\rho+(\nabla \rho\cdot x))\right]}\\
&=\dfrac{1}{\rho}\dfrac{\kappa^2-1}{\kappa q +\rho-(\kappa^2-1)\nabla \rho \cdot x}.
\end{align*}
\end{remark}

\begin{proof}[{\bf Proof of Theorem \ref{thm-1}}]
We want to isolate the Hessian  $D^2\rho$. We do this in a neighborhood of $x=0$. From \eqref{eq:simplified Dz}, we write
\begin{align*}
\det\, Dz=\dfrac{t^n}{\nabla \psi \cdot Y}  \dfrac{{ \psi^{n+1}}}{y_{n+1}} \det\, \mathcal M\,
\det \Bigg[
\frac{1}{t}\mathcal M^{-1} \mu_0+
\, \mathcal M^{-1}\bigg\{
\ (\kappa-b(\rho+x\cdot \na \rho))Id+\big\{\alpha(\na \rho-x(\rho+x\cdot \na \rho))-2bx\big\}\otimes \na \rho\bigg\}
+D^2\rho\Bigg].
\end{align*} 
Letting 
\begin{align}
\mathcal A&=b\,\mathcal M^{-1} \mu_0 \label{eq:mathcal A},\\
\mathcal B&= b\,\mathcal M^{-1}\bigg\{
\ (\kappa-b(\rho+x\cdot \na \rho))Id+\big\{\alpha(\na \rho-x(\rho+x\cdot \na \rho))-2bx\big\}\otimes \na \rho\bigg\},\label{eq:mathcal B}
\end{align}
and replacing in \eqref{compact}, we conclude the proof of Theorem  \ref{thm-1}.
\end{proof}

\section{Simplifying the matrix coefficients}\label{sec:Simplifying}
In this section we simplify the form of the matrices $\mathcal A$ and $\mathcal B$ given in \eqref{eq:mathcal A}, \eqref{eq:mathcal B}.

\subsection{The matrix $\mathcal A$}
We show the following simplified formula for $\mathcal A$.
\begin{lemma}\label{lm:simplified mathcal A}
Let $\mathcal A$ be the matrix given in \eqref{eq:mathcal A}, then 
\[
\mathcal A=\rho\left(\id+\dfrac{x\otimes x}{x_{n+1}^2}\right)-\dfrac{\kappa^2-1}{\rho}\nabla\rho\otimes\nabla\rho+\rho(1-\kappa+b(\rho+x\cdot\nabla \rho))\left[\gamma \nabla \rho+\dfrac{\kappa-\rho\gamma}{x_{n+1}^2}x\right]\otimes x.
\]

\end{lemma}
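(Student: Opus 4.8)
The plan is to start from the definition $\mathcal A = b\,\mathcal M^{-1}\mu_0$ in \eqref{eq:mathcal A} and substitute the explicit formulas for $\mathcal M^{-1}$ (Lemma \ref{lem-M-inverse}) and for $\mu_0$ (Lemma \ref{lem-mu-0}). The factor $b$ cancels the $\frac1b$ in $\mathcal M^{-1}$, so if we set
\[
v:=\na\rho-\frac{\rho}{x_{n+1}^2}x,\qquad N:=\id+\frac{x\otimes x}{x_{n+1}^2}-Q\,v\otimes\na\rho,
\]
then $\mathcal A=N\mu_0$, and every computation below reduces to applying $N$ to the building blocks of $\mu_0$. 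A vector $u$ is sent by $N$ to $Nu=u+\frac{x\cdot u}{x_{n+1}^2}x-Q(\na\rho\cdot u)v$, and the identity $1+\frac{|x|^2}{x_{n+1}^2}=\frac1{x_{n+1}^2}$ will be used repeatedly.

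The first simplification is to rewrite $\mu_0$ itself. By Lemma \ref{lm:formula for Y} one has $y=\sigma x+b\na\rho$ with $\sigma=\kappa-b(\rho+x\cdot\na\rho)$, and since $|X|^2=1$ a one-line computation gives $X\cdot Y=\kappa-b\rho$, so that $\rho b+X\cdot Y=\kappa$. Plugging $y$ and $X\cdot Y$ into $\mu_0=\rho\,\id+x\otimes\na\rho+y\otimes(\rho x-\rho y-(X\cdot Y)\na\rho)$ collapses the last slot to $\rho x-\rho y-(X\cdot Y)\na\rho=\rho(1-\sigma)x-\kappa\na\rho$, whence
\[
\mathcal A=\rho N+N(x\otimes\na\rho)+\rho(1-\sigma)\,(Ny)\otimes x-\kappa\,(Ny)\otimes\na\rho.
\]
Computing $Ny$ is the crux of the matching: using $x\cdot y=\sigma|x|^2+b(x\cdot\na\rho)$, $\na\rho\cdot y=\sigma(x\cdot\na\rho)+b|\na\rho|^2$, the relation $\sigma+b(x\cdot\na\rho)=\kappa-b\rho$, and $Q(\sigma\,x\cdot\na\rho+b|\na\rho|^2)=b-\gamma$ (the definition of $\gamma$), one finds
\[
Ny=\gamma\,\na\rho+\frac{\kappa-\rho\gamma}{x_{n+1}^2}x,
\]
which is exactly the bracket in the statement; since $1-\sigma=1-\kappa+b(\rho+x\cdot\na\rho)$, the term $\rho(1-\sigma)(Ny)\otimes x$ already equals the last summand of the claimed formula. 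Likewise $\rho N=\rho(\id+\frac{x\otimes x}{x_{n+1}^2})-\rho Q\,v\otimes\na\rho$ produces the first summand, and $Nx=\frac{x}{x_{n+1}^2}-Q(x\cdot\na\rho)v$ gives $N(x\otimes\na\rho)=\frac{x\otimes\na\rho}{x_{n+1}^2}-Q(x\cdot\na\rho)\,v\otimes\na\rho$.

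It then remains to show that the residual $-\rho Q\,v\otimes\na\rho+\frac{x\otimes\na\rho}{x_{n+1}^2}-Q(x\cdot\na\rho)\,v\otimes\na\rho-\kappa(Ny)\otimes\na\rho$ collapses to $-\frac{\kappa^2-1}{\rho}\na\rho\otimes\na\rho$ — in particular that the spurious $x\otimes\na\rho$ contributions cancel. All these terms have the form $(\cdot)\otimes\na\rho$; expanding $v$ and $Ny$ and collecting, the left factor equals $c_1\na\rho+c_2x$ with
\[
c_1=-Q(\rho+x\cdot\na\rho)-\kappa\gamma,\qquad c_2=\frac{1}{x_{n+1}^2}\big(1-\kappa^2+Q\rho(\rho+x\cdot\na\rho)+\kappa\rho\gamma\big),
\]
so that $\rho\,c_2\,x_{n+1}^2=1-\kappa^2-\rho c_1$ and hence $c_2=0\iff c_1=\tfrac{1-\kappa^2}{\rho}$; it suffices to verify the latter. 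This is the one genuinely delicate bookkeeping point, and I would handle it by first deriving the auxiliary identity $\gamma=Q(q-\kappa\,x\cdot\na\rho)$ from the definition of $\gamma$ together with the defining relation $Q\big(q+b(|\na\rho|^2-x\cdot\na\rho(\rho+x\cdot\na\rho))\big)=b$, and then combining it with the closed form $Q=\tfrac1\rho\cdot\tfrac{\kappa^2-1}{\kappa q+\rho-(\kappa^2-1)x\cdot\na\rho}$ of Remark \ref{formula second for Q}; these reduce $c_1$ to $-Q\big(\rho+\kappa q-(\kappa^2-1)x\cdot\na\rho\big)=-\frac{\kappa^2-1}{\rho}$, as needed. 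The main obstacle is thus precisely this final algebraic reduction, where one must juggle $a$, $b$, $q$ and $Q$ through their defining relations while keeping the $x$- and $\na\rho$-components separate; a convenient consistency check is to specialize the resulting formula to $x=0$ and confirm that it reproduces \eqref{eq:A at 0}.
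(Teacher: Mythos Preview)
Your proof is correct and follows essentially the same route as the paper's: both compute $b\mathcal M^{-1}\mu_0$ by first identifying $Ny=\gamma\,\nabla\rho+\frac{\kappa-\rho\gamma}{x_{n+1}^2}x$, and both settle the $x\otimes\nabla\rho$ cancellation (your $c_2=0$ is the paper's $A_1=0$, your $c_1$ is its $A_4$) via the auxiliary identity $\gamma=Q(q-\kappa\,x\cdot\nabla\rho)$ together with the closed form for $Q$ in Remark~\ref{formula second for Q}. (Minor slip: the relation should read $c_2\,x_{n+1}^2=1-\kappa^2-\rho c_1$, without the leading $\rho$, but the equivalence $c_2=0\iff c_1=\tfrac{1-\kappa^2}{\rho}$ you draw from it is unaffected.)
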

\begin{proof}
From Lemmas \ref{lem-mu-0}, and  \ref{lem-M-inverse}, 
\begin{eqnarray*}
\mathcal A
&=&
b\mathcal M^{-1}\mu_0\\
&=&
\left\{\left[\id+\dfrac{x\otimes x}{x_{n+1}^2}\right]-Q\left(\nabla \rho-\dfrac{\rho}{x_{n+1}^2}x\right)\otimes \nabla \rho\right\}\left\{\rho \id+x\otimes \na \rho+y\otimes \left[\rho x-\rho y-\na \rho(X\cdot Y)\right]\right\}\\
&=&
\rho\left\{\left[\id+\dfrac{x\otimes x}{x_{n+1}^2}\right]-Q\left(\nabla \rho-\dfrac{\rho}{x_{n+1}^2}x\right)\otimes \nabla \rho\right\}\\
&&\,
+\,  x\otimes \nabla \rho+\dfrac{|x|^2}{x_{n+1}^2}x\otimes \nabla \rho-Q(x\cdot \nabla \rho)\left(\nabla \rho-\dfrac{\rho}{x_{n+1}^2}x\right)\otimes \nabla \rho\\
&&\,
+\underbrace{\left\{y+\dfrac{x\cdot y}{x_{n+1}^2}x-Q(\nabla \rho\cdot y)\left(\nabla \rho-\dfrac{\rho}{x_{n+1}^2}x\right)\right\}\otimes[\rho x-\rho y-\nabla \rho(X\cdot Y)]}_J.
\end{eqnarray*}
We simplify  $J$ and express it as a linear combination of 
$x\otimes x, x\otimes \na \rho, \na \rho \otimes x$ and $\na \rho \otimes\na \rho$. 

From Lemma \ref{lm:formula for Y}
\[%
y=\kappa x+b(\nabla \rho-(\rho+x\cdot \nabla \rho)x),\qquad\qquad\qquad
X\cdot Y=\kappa -b\rho,
\]and so
\begin{align*}
\rho x -\rho y-(X\cdot Y) \nabla \rho&=\rho x-\rho\left[\kappa x+b(\nabla \rho-(\rho+x\cdot \nabla \rho)x)\right]-(\kappa-b\rho)\na \rho\\
&=\rho(1-\kappa+b(\rho+x\cdot \nabla \rho))x-\kappa\nabla \rho.
\end{align*}
On the other hand
\begin{align*}
y+\dfrac{x\cdot y}{x_{n+1}^2}x-Q(\nabla \rho\cdot y)\left(\nabla \rho-\dfrac{\rho}{x_{n+1}^2}x\right)&=\left[\kappa -b(\rho+x\cdot \nabla \rho)\right]x+b\nabla \rho+\dfrac{[\kappa-b(\rho+x\cdot \nabla \rho)]|x|^2+bx\cdot \nabla \rho}{x_{n+1}^2}x\\
&\qquad-Q\left([\kappa -b(\rho+x\cdot \nabla \rho)]x\cdot \nabla \rho+b|\nabla \rho|^2\right)\left(\nabla \rho-\dfrac{x\rho}{x_{n+1}^2}\right)\\
&:=\alpha_1 x+\gamma y
\end{align*}
with
\begin{align*}
\alpha_1&=\kappa-b(\rho+x\cdot \nabla\rho)+\dfrac{[\kappa-b(\rho+x\cdot \nabla \rho)]|x|^2+b x\cdot \nabla \rho}{x_{n+1}^2}+\dfrac{Q([\kappa -b(\rho+x\cdot \nabla \rho)]x\cdot \nabla \rho+b|\nabla \rho|^2)\rho}{x_{n+1}^2}\\
&=\dfrac{\kappa-\rho\left\{b-Q([\kappa -b(\rho+x\cdot \nabla \rho)]x\cdot \nabla \rho+b|\nabla \rho|^2)\right\}}{x_{n+1}^2}=\dfrac{\kappa-\rho\gamma}{x_{n+1}^2}.
\end{align*}
Observe 
\begin{eqnarray}\label{imp gamma}
\gamma
&=&
b-Q\left([\kappa- b(\rho+x\cdot \nabla \rho)]x\cdot \nabla \rho+b|\nabla \rho|^2\right)\\ \nonumber
&=&
b-Q\left(\kappa (x\cdot \na \rho) +b\left[|\nabla \rho|^2-(x\cdot \nabla \rho)(\rho+x\cdot \nabla \rho)\right]\right)\\\nonumber
&=&
b-Q\kappa (x\cdot \na \rho)+Qq-b\\\nonumber
&=&
Q(q-\kappa(x\cdot \na \rho)),
\end{eqnarray}
where we used the formula for $Q$ to get the last line.
Hence 
\begin{align*}
J&=\left(\dfrac{\kappa-\rho\,\gamma}{x_{n+1}^2}x+\gamma\nabla \rho\right)\otimes \left\{\rho\left(1-\kappa+b(\rho+x\cdot \nabla \rho)\right)x-\kappa \nabla \rho\right\}\\
&=\left(\dfrac{\kappa-\rho\,\gamma}{x_{n+1}^2}\rho(1-\kappa+b(\rho+x\cdot \nabla \rho))\right)x\otimes x \\
&\, \
+\gamma\rho(1-\kappa+b(\rho+x\cdot \nabla \rho))\nabla \rho \otimes x-\kappa\dfrac{\kappa-\rho \gamma}{x_{n+1}^2}x\otimes \nabla \rho-\kappa\gamma \nabla \rho\otimes \nabla \rho.\\
\end{align*}

We write 
$$\mathcal A= \rho Id+A_1 x\otimes \nabla \rho +A_2 \nabla \rho \otimes x+ A_3 x\otimes x+A_4 \nabla \rho\otimes \nabla \rho.$$
Notice that
$$ A_1=\dfrac{Q\rho^2+1+Q\rho(x\cdot \nabla \rho)-\kappa(\kappa -\rho \gamma)}{x_{n+1}^2}.$$

We claim that $A_1=0$. In fact, Remark \ref{formula second for Q} and \eqref{imp gamma} yield \begin{eqnarray*}
(x_{n+1}^2) A_1&=&
1-\kappa^2+\rho Q\left[(\rho+x\cdot \na \rho)+\kappa(q-\kappa(x\cdot \na \rho))\right]\\
&=&
1-\kappa^2+\rho Q\left[\kappa q+\rho+(1-\kappa^2)x\cdot \na \rho\right]\\
&=&
0.
\end{eqnarray*}
We also have
\begin{align*}
A_2&=\gamma \rho(1-\kappa+b(\rho+x\cdot \nabla \rho))\\
A_3&= \dfrac{\rho}{x_{n+1}^2}+\dfrac{\kappa-\rho \gamma}{x_{n+1}^2}\rho(1-\kappa+b(\rho+x\cdot \nabla \rho)),\\
A_4&=-Q\rho-Q(x\cdot \nabla \rho)-\kappa \gamma\\
&=-\dfrac{x_{n+1}^2}{\rho}\dfrac{Q\rho^2+1+Q\rho(x\cdot \nabla \rho)-\kappa(\kappa-\rho\gamma)}{x_{n+1}^2}-\dfrac{\kappa^2-1}{\rho}\\
&=-\dfrac{x_{n+1}^2}{\rho}A_1-\dfrac{\kappa^2-1}{\rho}=-\dfrac{\kappa^2-1}{\rho}
\end{align*}
and hence the Lemma follows.
\end{proof}

\subsection{The matrix $\mathcal B$}
We simplify now the matrix $\mathcal B$, and show the following
\begin{lemma}\label{lm:simplified B}
Given $\mathcal B$ in \eqref{eq:mathcal B}, we can write
\begin{equation}\label{def-B}
\mathcal B= \sigma \left\{\id+\dfrac{x\otimes x}{x_{n+1}^2}\right\}
+
\dfrac{Q\rho F-\alpha\rho-2b}{x_{n+1}^2}x\otimes \na \rho
+
(\alpha-QF)\na \rho\otimes \na \rho.
\end{equation}
\end{lemma}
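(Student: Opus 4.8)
The plan is to follow the same strategy used for $\mathcal A$ in Lemma \ref{lm:simplified mathcal A}: expand the product $b\,\mathcal M^{-1}\{\cdots\}$ using the explicit inverse $\mathcal M^{-1}=\frac1b\{\id+\frac{x\otimes x}{x_{n+1}^2}-Q(\nabla\rho-\frac{\rho}{x_{n+1}^2}x)\otimes\nabla\rho\}$ from Lemma \ref{lem-M-inverse}, and then collect everything into a linear combination of the four elementary tensors $\id+\frac{x\otimes x}{x_{n+1}^2}$, $x\otimes\nabla\rho$, $\nabla\rho\otimes x$, and $\nabla\rho\otimes\nabla\rho$ (the first one being the natural combination that appears because $\mathcal M^{-1}$ carries the factor $\id+\frac{x\otimes x}{x_{n+1}^2}$). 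The bracketed Hessian-free matrix here is $\sigma_0\,\id + w\otimes\nabla\rho$ with $\sigma_0=\kappa-b(\rho+x\cdot\nabla\rho)=\sigma$ and $w=\alpha(\nabla\rho-(\rho+x\cdot\nabla\rho)x)-2bx$, so the structure is actually simpler than in the $\mathcal A$ case — there is no $y\otimes[\cdots]$ term coming from $\mu_0$, only a scalar multiple of the identity plus a single rank-one term.

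Concretely, I would proceed as follows. First, applying $\mathcal M^{-1}$ to $\sigma\,\id$ immediately gives $\sigma\{\id+\frac{x\otimes x}{x_{n+1}^2}\} - \sigma Q(\nabla\rho-\frac{\rho}{x_{n+1}^2}x)\otimes\nabla\rho$, which already supplies the first term of \eqref{def-B} plus a contribution to the $x\otimes\nabla\rho$ and $\nabla\rho\otimes\nabla\rho$ coefficients. Second, I compute $\mathcal M^{-1}(w\otimes\nabla\rho) = (\frac1b\mathcal M^{-1}w)\otimes\nabla\rho$ using $(a\otimes b)(c\otimes d)=(b\cdot c)\,a\otimes d$ together with $\mathcal M^{-1}$; this reduces to evaluating the single vector $\mathcal M^{-1}w$, which is a linear combination of $x$ and $\nabla\rho$ whose coefficients involve $\alpha$, $b$, $Q$, the scalars $|\nabla\rho|^2$, $x\cdot\nabla\rho$, $\rho$, and — crucially — the quantity $q$ via the identity $|\nabla\rho|^2-(x\cdot\nabla\rho)(\rho+x\cdot\nabla\rho) = (q^2-\kappa^2\rho^2)/((\kappa^2-1))\cdot(\text{something})$, i.e. the same simplification $\frac bq[|\nabla\rho|^2-(x\cdot\nabla\rho)(\rho+x\cdot\nabla\rho)]$ that appeared in Lemma \ref{lem-M-inverse}. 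I expect the coefficient of $x$ to organize itself (after clearing $x_{n+1}^2=1-|x|^2$) into $\frac{Q\rho F-\alpha\rho-2b}{x_{n+1}^2}$ with $F=\sigma+\alpha|\nabla\rho|^2-(\alpha(\rho+x\cdot\nabla\rho)+2b)x\cdot\nabla\rho$ exactly as defined in the list of notations, and the coefficient of $\nabla\rho$ to combine with the $-\sigma Q(\cdots)$ piece from the first step to yield $(\alpha-QF)$.

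The main obstacle, as with $\mathcal A$, will be the bookkeeping in showing that the spurious coefficient — here the coefficient multiplying $x\otimes\nabla\rho$ that is \emph{not} already packaged inside $\frac{x\otimes x}{x_{n+1}^2}$ or inside the $\frac{1}{x_{n+1}^2}$ normalization — collapses correctly, i.e. that the $\id$ and $\frac{x\otimes x}{x_{n+1}^2}$ terms really do occur with the \emph{same} scalar $\sigma$, so that they can be grouped as $\sigma\{\id+\frac{x\otimes x}{x_{n+1}^2}\}$. This forces an identity analogous to the vanishing of $A_1$ in Lemma \ref{lm:simplified mathcal A}, and I would prove it the same way: substitute the second expression for $Q$ from Remark \ref{formula second for Q} and the identity $\gamma=Q(q-\kappa x\cdot\nabla\rho)$ from \eqref{imp gamma}, reducing the purported identity to the algebraic relation $\rho Q[\kappa q+\rho+(1-\kappa^2)x\cdot\nabla\rho]=\kappa^2-1$, which is precisely Remark \ref{formula second for Q} rearranged. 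Once that cancellation is verified, reading off the remaining two coefficients and matching them to $F$ is a routine (if lengthy) simplification, and the Lemma follows.
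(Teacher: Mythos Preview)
Your approach is correct and matches the paper's: expand $b\mathcal M^{-1}\{\sigma\,\id+w\otimes\nabla\rho\}$ with $w=\alpha(\nabla\rho-(\rho+x\cdot\nabla\rho)x)-2bx$ using Lemma~\ref{lem-M-inverse}, then collect coefficients. However, you have over-anticipated the difficulty. Because the bracketed factor has the form $\sigma\,\id+w\otimes\nabla\rho$ (no $\cdots\otimes x$ piece, unlike $\mu_0$), the product $b\mathcal M^{-1}\cdot(\sigma\,\id)=\sigma\{\id+\frac{x\otimes x}{x_{n+1}^2}\}-\sigma Q(\nabla\rho-\frac{\rho}{x_{n+1}^2}x)\otimes\nabla\rho$ already delivers the $\id$ and $x\otimes x$ terms with the \emph{same} scalar $\sigma$, and the remaining piece $(b\mathcal M^{-1}w)\otimes\nabla\rho$ contributes only to $x\otimes\nabla\rho$ and $\nabla\rho\otimes\nabla\rho$. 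No $\nabla\rho\otimes x$ tensor ever appears, and no ``$A_1$-type'' cancellation identity is required; the only simplification needed for $\Lambda_0$ is the elementary $|x|^2+x_{n+1}^2=1$, after which the coefficients $\Lambda_0=\frac{Q\rho F-\alpha\rho-2b}{x_{n+1}^2}$ and $\Lambda=\alpha-QF$ drop out directly.
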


\begin{proof}
We have from \eqref{eq:mathcal B}, and Lemma \ref{lem-M-inverse}
\begin{align*}
\mathcal B&= \left\{\left[\id+\dfrac{x\otimes x}{x_{n+1}^2}\right]-Q\left(\nabla \rho-\dfrac{\rho}{x_{n+1}^2}x\right)\otimes \nabla \rho\right\}\left\{\sigma\id+\left[\alpha \nabla \rho-(\alpha(\rho+x\cdot\nabla \rho)+2b)x\right]\otimes\na \rho\right\}\\
&=\sigma\left\{ \id+\dfrac{x\otimes x}{x_{n+1}^2}-Q\left(\nabla \rho-\dfrac{\rho}{x_{n+1}^2}x\right)\otimes \nabla \rho\right\}+\left[\alpha \nabla \rho-(\alpha(\rho+x\cdot\nabla \rho)+2b)x\right]\otimes\nabla \rho\\
&\quad +\dfrac{1}{x_{n+1}^2}\left(\alpha x\cdot \nabla \rho-(\alpha(\rho+x\cdot\nabla \rho)-2b)|x|^2\right)x\otimes\nabla \rho -Q\left(\alpha |\nabla \rho|^2-(\alpha(\rho+x\cdot \nabla \rho)+2b)x\cdot \nabla \rho\right)\left(\nabla \rho-\dfrac{\rho}{x_{n+1}^2}x\right)\otimes \nabla \rho\\
&=\sigma \id+\sigma \dfrac{x\otimes x}{x_{n+1}^2} +\Lambda_0 x\otimes \na \rho +\Lambda \na \rho \otimes\na \rho.
\end{align*}
The coefficient of $x\otimes \nabla \rho$ is
{
\begin{align*}
\hspace{-1cm}\Lambda_0&:=\dfrac{\sigma Q\rho}{x_{n+1}^2}-\alpha(\rho+x\cdot \nabla \rho)-2b+\dfrac{1}{x_{n+1}^2}\left(\alpha\,x\cdot \nabla \rho-(\alpha(\rho+x\cdot\nabla \rho)+2b)|x|^2\right)\\
&\qquad+\dfrac{Q\rho}{x_{n+1}^2}\left(\alpha |\nabla \rho|^2-(\alpha(\rho+x\cdot \nabla \rho)+2b)x\cdot \nabla \rho\right)\\
&=\dfrac{Q\rho\sigma}{x_{n+1}^2}+\dfrac{1}{x_{n+1}^2}\left(\alpha x\cdot \nabla \rho-\alpha(\rho+x\cdot \nabla \rho)-2b\right)+\dfrac{Q\rho}{x_{n+1}^2}\left(\alpha |\nabla \rho|^2-(\alpha(\rho+x\cdot \nabla \rho)+2b)x\cdot \nabla \rho\right)\\
&=\dfrac{1}{x_{n+1}^2}\left\{Q\rho\left[\sigma+ \alpha |\nabla \rho|^2-(\alpha(\rho+x\cdot \nabla \rho)+2b)x\cdot \nabla \rho \right]-\alpha\rho-2b\right \}\\
&=\dfrac{1}{x_{n+1}^2}\left\{Q\rho F-\alpha\rho-2b\right\}.
\end{align*}
}
The coefficient of $\na \rho \otimes \na \rho$ is 
\begin{eqnarray}\nonumber
\Lambda
&:=&
-Q\sigma+\alpha
-
Q
\Big[\alpha|\nabla\rho|^2-\left(\alpha(\rho+x\cdot\nabla\rho)+2b\right)x\cdot \nabla\rho\Big]\\\nonumber
&=&
\alpha
-Q\Big[\sigma + \alpha|\nabla\rho|^2-\left(\alpha(\rho+x\cdot\nabla\rho)+2b\right)x\cdot \nabla\rho\Big]
\\\nonumber
&=&
\alpha-QF.
\end{eqnarray}
Therefore \eqref{def-B} follows.
\end{proof}

\subsection{The equation at $x=0$}\label{x=0}
We are now ready to prove Theorem \ref{thm:MA at 0}.

The value of the variables in the notation list at $0$ are summarized in the following list:
\begin{align*}
a(0,\rho,\nabla \rho)&=\rho^2+|\nabla \rho|^2,\\
q(0,\rho,\nabla \rho)&=\sqrt{\rho^2+(1-\kappa^2)|\nabla \rho|^2},\\
b(0,\rho,\nabla \rho)&=\dfrac{\kappa^2-1}{\kappa \rho+q},\\
\alpha(0,\rho,\nabla \rho)&=-\dfrac{b^2}{\kappa^2-1}\left[\dfrac{\kappa q+\rho}{q}\right],\\
\beta(0,\rho,\nabla \rho)&=\dfrac{b^2}{q},\\
Q(0,\rho,\nabla \rho)&=\dfrac{\kappa^2-1}{\rho(\kappa q+\rho)},\\
\sigma(0,\rho,\nabla \rho)&=\kappa-b\rho=\dfrac{\kappa q+\rho}{\kappa \rho+q},\\
\gamma(0,\rho,\nabla \rho) &=b-Q\,b|\nabla \rho|^2,\\
F(0,\rho,\nabla \rho)&=\sigma+\alpha |\nabla \rho|^2.\\
\end{align*}

\begin{proof}[Proof of Theorem \ref{thm:MA at 0}]

Notice that at $x=0$
\begin{eqnarray}\nonumber
\alpha-QF
&=&
\alpha-\frac{\kappa^2-1}{\rho(\kappa q+\rho)}[\kappa-b\rho+\alpha|\na\rho|^2]
=
\alpha\left(1-\frac{\kappa^2-1}{\rho(\kappa q+\rho)}|\na\rho|^2\right)
-\frac{\kappa^2-1}{\rho(\kappa q+\rho)}\sigma
\\\nonumber
&=&
\alpha\frac{(\rho^2+(1-\kappa^2)|\na \rho|^2)+\rho\kappa q}{\rho(\kappa q+\rho)}
-\frac{\kappa^2-1}{\rho(\kappa\rho+ q)}
=
\alpha\frac{q^2+\rho\kappa q}{\rho(\kappa q+\rho)}
-\dfrac{b}{\rho}\\\nonumber
&=&
\dfrac{\alpha q(\kappa\rho+q)}{(\kappa q+\rho)\rho}-\dfrac{b}{\rho}
=
\dfrac{b^2(\kappa\rho+q)}{\rho(1-\kappa^2)}-\dfrac{b}{\rho}\\\nonumber
&=&\dfrac{b(\kappa^2-1)}{\rho(1-\kappa^2)}-\dfrac{b}{\rho}
=-\dfrac{2b}{\rho}.\nonumber
\end{eqnarray}
Plugging in the obtained results at $x=0$ in the formulae for $\mathcal A$, and $\mathcal B$ obtained in Lemmas \ref{lm:simplified mathcal A} and \ref{lm:simplified B}, the proof follows.
\end{proof}

\section{The A3  condition}\label{sec:MTW}
To express the A3 condition at $x=0$ we use the dummy variables $v\in \R$ $p=(p_1,\cdots,p_n)\in \R^n$ in place of $\rho$ and $\nabla \rho$. 
Recall that the $A3$ conditions requires 
\begin{equation}\label{venedikt-A3}
\sum_{i,j,l,m} \dfrac{\partial^2 \left(I_{ij}+II_{ij}\right)}{\partial p_l\partial p_m}\xi_i\xi_j\eta_{l}\eta_m\leq -c_0|\xi|^2|\eta|^2.
\end{equation}
For simplicity we will drop the dependence of the functions involved evaluated at $(0,v,p)$ and use their values obtained in Section \ref{x=0}. 

We define
\begin{align}\label{eq:I}
I(0,v,p)&=\dfrac{1}{b(0,v,p) t}\left(v Id-\dfrac{\kappa^2-1}{v}p\otimes p\right),\\ \label{eq:II}
II(0,v,p)&=\dfrac{\kappa-b(0,v,p)\rho}{b(0,v,p)}Id-\dfrac{2}{v}p\otimes p,
\end{align} 
then \eqref{eq:PDE at 0} can be written as follows
$$\det(D^2\rho+I(0,\rho,\nabla \rho)+II(0,\rho,\nabla \rho))=RHS.$$

We calculate in this section the derivatives of the matrices $I$, $II$ and the stretch function $t$ and deduce the required $A3$ condition \eqref{venedikt-A3}.

\subsection{Derivatives of I and II}
In this section, we show the following Proposition.
 \begin{proposition}\label{prop:second derivative of I}

Given orthogonal vectors $\xi$ and $\eta$ in $\R^n$, we have at $x=0$
$$
\sum\dfrac{\partial^2 \left(I_{ij}+II_{ij}\right)}{\partial p_l\partial p_m}\xi_i\xi_j\eta_{l}\eta_m=\sum_{l,m} H_{lm}\eta_l\eta_m, 
$$
with
{\small
\begin{align}\label{eq:form of H_{lm}}
\hspace{-1cm}H_{lm}&=\left(\dfrac{v|\xi|^2}{\kappa^2-1}-\dfrac{(p\cdot\xi)^2}{v}\right)\left(\dfrac{kv+q}{t}\right)_{p_lp_m}+\dfrac{\kappa|\xi|^2}{\kappa^2-1}q_{p_lp_m}\\\nonumber
&=\left(\dfrac{v|\xi|^2}{\kappa^2-1}-\dfrac{(p\cdot\xi)^2}{v}\right)\left((\kappa v+q)\left(\dfrac{1}{t}\right)_{p_lp_m}+q_{p_m}\left(\dfrac{1}{t}\right)_{p_l}+q_{p_l}\left(\dfrac{1}{t}\right)_{p_m}+q_{p_lp_m}\left(\dfrac{1}{t}\right)\right)+\dfrac{\kappa|\xi|^2}{\kappa^2-1}q_{p_lp_m}
\end{align}
}
where $t,q$ and its derivatives with respect to $p$ are evaluated at $(0,v,p)$.
\end{proposition}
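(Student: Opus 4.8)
The plan is to reduce the full fourth–order contraction to the ordinary $p$-Hessian of one scalar function. Since $\xi$ and $\eta$ are fixed vectors in $\R^n$, not depending on $p$, we have
\[
\sum_{i,j,l,m}\frac{\p^2(I_{ij}+II_{ij})}{\p p_l\,\p p_m}\,\xi_i\xi_j\eta_l\eta_m
=\sum_{l,m}\frac{\p^2\Phi}{\p p_l\,\p p_m}\,\eta_l\eta_m,
\qquad
\Phi(v,p):=\sum_{i,j}(I_{ij}+II_{ij})\,\xi_i\xi_j .
\]
Contracting \eqref{eq:I} and \eqref{eq:II} against $\xi\otimes\xi$ and using $(p\otimes p)_{ij}\xi_i\xi_j=(p\cdot\xi)^2$, I would write $\Phi=A(v,p)\,|\xi|^2+C(v,p)\,(p\cdot\xi)^2$ with
\[
A=\frac{v}{b\,t}+\frac{\kappa-bv}{b},\qquad
C=-\frac{\kappa^2-1}{b\,v\,t}-\frac{2}{v},
\]
where throughout $b=b(0,v,p)$ and $t=t(0,v,p)$.

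Next I would differentiate in $p$. The $A$-term is immediate: since $|\xi|^2$ is constant it contributes $|\xi|^2\,A_{p_lp_m}$. For the $C$-term, differentiating $C\,(p\cdot\xi)^2$ twice produces, besides $C_{p_lp_m}(p\cdot\xi)^2$, three terms each carrying a factor $\xi_l$ or $\xi_m$; contracting with $\eta_l\eta_m$ kills all three because $\xi\cdot\eta=0$. Hence $H_{lm}=|\xi|^2\,A_{p_lp_m}+(p\cdot\xi)^2\,C_{p_lp_m}$.

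It then remains to substitute $b(0,v,p)=\dfrac{\kappa^2-1}{\kappa v+q}$ from the list of values at $x=0$ in Section \ref{x=0}, i.e. $\dfrac1b=\dfrac{\kappa v+q}{\kappa^2-1}$, which turns the expressions above into
\[
A=\frac{v}{\kappa^2-1}\cdot\frac{\kappa v+q}{t}+\frac{\kappa}{\kappa^2-1}(\kappa v+q)-v,\qquad
C=-\frac1v\cdot\frac{\kappa v+q}{t}-\frac2v .
\]
Because $v$ and $\kappa v$ are constants with respect to $p$, the second $p$-derivatives annihilate $-v$ and $-2/v$ and replace $(\kappa v+q)_{p_lp_m}$ by $q_{p_lp_m}$, so that
\[
A_{p_lp_m}=\frac{v}{\kappa^2-1}\Big(\tfrac{\kappa v+q}{t}\Big)_{p_lp_m}+\frac{\kappa}{\kappa^2-1}\,q_{p_lp_m},\qquad
C_{p_lp_m}=-\frac1v\Big(\tfrac{\kappa v+q}{t}\Big)_{p_lp_m}.
\]
Plugging these into $H_{lm}=|\xi|^2A_{p_lp_m}+(p\cdot\xi)^2C_{p_lp_m}$ and collecting the common factor $\big(\tfrac{\kappa v+q}{t}\big)_{p_lp_m}$ gives exactly the first line of \eqref{eq:form of H_{lm}}; the second line follows by expanding $\big((\kappa v+q)\cdot\tfrac1t\big)_{p_lp_m}$ with the Leibniz rule and using $(\kappa v+q)_{p_l}=q_{p_l}$.

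I do not expect a genuine obstacle; the computation is essentially bookkeeping. The two points that need care are: (i) the reduction step, where one must confirm that $\xi\cdot\eta=0$ annihilates every cross term coming from the second $p$-derivative of $(p\cdot\xi)^2$, so that only the Hessians $A_{p_lp_m}$ and $C_{p_lp_m}$ survive; and (ii) the algebraic simplification of $A$ and $C$ via $b=(\kappa^2-1)/(\kappa v+q)$, remembering that $v$ is frozen while $q=q(0,v,p)$ and $t=t(0,v,p)$ are differentiated, and that the $p$-derivatives of $q$ and $t$ are deliberately left unexpanded, to be handled in the subsequent computation.
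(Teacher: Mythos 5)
Your proof is correct and takes essentially the same approach as the paper: both exploit the orthogonality $\xi\cdot\eta=0$ to annihilate every cross-term arising from differentiating $p_ip_j$, and both simplify via $\frac1b=\frac{\kappa v+q}{\kappa^2-1}$ at $x=0$. The only difference is bookkeeping order — you contract $I_{ij}+II_{ij}$ against $\xi\otimes\xi$ first and then take the $p$-Hessian of the resulting scalar $\Phi=A|\xi|^2+C(p\cdot\xi)^2$, whereas the paper first writes the full matrix Hessian $\partial^2_{p_lp_m}(I_{ij}+II_{ij})$ and then contracts; since $\xi,\eta$ are $p$-independent these commute, and your ordering avoids displaying the intermediate matrix formula.
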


\begin{proof}
We have 
$\dfrac{1}{b}=\dfrac{\kappa v+q}{\kappa^2-1}$, then
$$I_{ij}(0,v,p)=\dfrac{\kappa v+q}{\kappa^2-1}\left(\dfrac{1}{t}\right)\left(v\delta_{ij}-\dfrac{\kappa^2-1}{v}p_ip_j\right).$$
Then it follows that 
\begin{align}\label{eq:first derivative of I}
\dfrac{\partial I_{ij}}{\partial p_l}(0,v,p)=-\dfrac{\kappa v+q}{v}\left(\dfrac{1}{t}\right)(p_ip_j)_{p_l}+\left(\dfrac{\kappa v+q}{\kappa^2-1}\left(\dfrac{1}{t}\right)_{p_l}+\dfrac{q_{p_l}}{\kappa^2-1}\left(\dfrac{1}{t}\right)\right)\left(v\delta_{ij}-\dfrac{\kappa^2-1}{v}p_ip_j\right).
\end{align}
Differentiating \eqref{eq:first derivative of I} with respect to $p_m$ we get
\begin{align}\label{eq:second derivative of I}
\dfrac{\partial^2 I_{ij}}{\partial p_m\partial p_l}&=-\dfrac{\kappa v+q}{v}\left(\dfrac{1}{t}\right)(p_ip_j)_{p_lp_m}-\dfrac{q_{p_m}}{v}\left(\dfrac{1}{t}\right)(p_ip_j)_{p_l}-\dfrac{\kappa v+q}{v}\left(\dfrac{1}{t}\right)_{p_m}(p_ip_j)_{p_l}-\left(\dfrac{\kappa v+q}{v}\left(\dfrac{1}{t}\right)_{p_l}+\dfrac{q_{p_l}}{v}\left(\dfrac{1}{t}\right)\right)\left(p_ip_j\right)_{p_m}\\\nonumber
&+\quad\dfrac{1}{\kappa^2-1}\left(\dfrac{\kappa v+q}{t}\right)_{p_lp_m}\left(v\delta_{ij}-\dfrac{\kappa^2-1}{v}p_ip_j\right).
\end{align}

Notice that since $\xi$ and $\eta$ are orthogonal then the following three identities hold
\begin{align}\label{eq:orthogonality one}
\sum_{i,j,l,m=1}^n (p_ip_j)_{p_lp_m}\xi_i\xi_j\eta_l\eta_m&=\sum_{i,j,l,m=1}^n (\delta_{j m}\delta_{il}+\delta_{im}\delta_{jl})\xi_i\xi_j\eta_l\eta_m=2\sum_{i,j=1}^n\xi_i\eta_i\xi_j\eta_j= 2(\xi\cdot\eta)^2=0,\\ \label{eq:orthogonality two}
\sum_{i,j,l=1}^n (p_ip_j)_{p_l}\xi_i\xi_j\eta_l\eta_m&=\eta_m\sum_{i,j,l=1}^n (p_j\delta_{il}+p_i\delta_{jl})\xi_i\xi_j\eta_l=\eta_m\left(\sum_{j,l=1}^n p_j \xi_l\eta_l\xi_j+\sum_{i,l=1}^n p_i\xi_l\eta_l\xi_i\right)=2\eta_m(\xi\cdot \eta)(p\cdot\xi)=0,\\ \label{eq:orthogonality three}
\sum_{i,j,m=1}^n (p_ip_j)_{p_m}\xi_i\xi_j\eta_l\eta_m&=0.
\end{align}

Hence using the identities \eqref{eq:orthogonality one}, \eqref{eq:orthogonality  two}, \eqref{eq:orthogonality three}, and \eqref{eq:second derivative of I} we get 
\begin{equation}\label{eq:derivative of I}
\sum_{i,j,l,m}\dfrac{\partial^2I_{ij}}{\partial p_l\partial p_m}\xi_i\xi_j\eta_l\eta_m=\left(\dfrac{v|\xi|^2}{\kappa^2-1}-\dfrac{(p\cdot\xi)^2}{v}\right)\sum_{l.m}\left(\dfrac{kv+q}{t}\right)_{p_lp_m}\eta_l\eta_m.
\end{equation}

We next calculate the derivatives of $II$. From \eqref{eq:II} we have
$$II_{ij}=\dfrac{\kappa q(0,v,p)+\rho}{\kappa^2-1}\delta_{ij}-\dfrac{2}{v}p_ip_j.$$
Then
$$\dfrac{\partial^2 II_{ij}}{\partial p_l\partial p_m}=\dfrac{\kappa q_{p_lp_m}}{\kappa^2-1}\delta_{ij}-\dfrac{2}{v}(p_ip_j)_{p_lp_m}.$$
Therefore \eqref{eq:orthogonality one} yields 
\begin{align}\label{eq:derivative of II}
\sum_{i,j,l,m=1}^n\dfrac{\partial ^2 II_{ij}}{\partial p_lp_m}\xi_i\xi_j\eta_l\eta_m&=\dfrac{\kappa |\xi|^2}{\kappa^2-1}\sum_{l,m=1}^nq_{p_lp_m}\eta_l\eta_m.
\end{align}

The proof of Proposition \ref{prop:second derivative of I} is now complete.
 \end{proof}

\subsection{Derivatives of the stretch function} To compute the derivatives of $t$ we use a computation form \cite{K-siam}.  Recall that $\psi(Z)=0$ for every $Z\in \Sigma$ then using the dummy variables $(x,v,p)$ we have from \eqref{def-Z}
$$\psi\left(v\left(x,\sqrt{1-|x|^2}\right)+tY\right)=0,$$
where using Lemma \ref{lm:formula for Y}
\begin{equation}\label{eq:dummy Y}
Y(x,v,p)=\kappa\left(x,\sqrt{1-|x|^2}\right)+b(0,v,p)\left((p,0)-\left(x,\sqrt{1-|x|^2}\right)(v+x\cdot p)\right).
\end{equation}
At $x=0$, denoting $e_{n+1}=(0,0,\cdots,0,1)$ the $n+1$ dimensional vertical vector, we have
$$
\psi\left(ve_{n+1}+tY\right)=0.
$$

Differentiating with respect to $p_m$ yields
$$0=\sum_{k=1}^{n+1}\psi^k (t_{p_{m}}y_k+t(y_k)_{p_m}),$$
hence for $m=1,\cdots,n$
\begin{eqnarray}\label{t_k}
  \frac{t_{p_m}}{t}=-\frac{\sum_{k}\psi^k (y_k)_{p_m}}{\sum_k\psi^k y_k}=-\frac{\nabla\psi\cdot Y_{p_m}}{\nabla \psi\cdot Y}, \quad m=1, \dots n.
\end{eqnarray}
Hence
\begin{equation}\label{eq:One derivative 1/t}
\left(\frac1t\right)_{p_m}=\frac1t \frac{\nabla \psi \cdot Y_{p_m}}{\nabla \psi\cdot Y}, \quad m=1, \dots n.
\end{equation}

Differentiating \eqref{t_k} with respect to $p_l$ we get
\begin{eqnarray}\nonumber
  \frac{t_{p_mp_l}}{t}-\frac{t_{p_m}t_{p_l}}{t^2}
  &=&-\Bigg[\frac{\sum_{ks}\psi^{ks}({}(y_s)_{p_l}t+{}y_st_{p_l}){}(y_k)_{p_m}+\sum_k\psi^k{{}(y_k)_{p_mp_l}}}{\nabla\psi\cdot{}Y}\\\nonumber
  &&-\frac{\sum_k\psi^k{}(y_k)_{p_m}}{(\nabla\psi\cdot{}Y)^2}
  \left(\sum_{k,s}\psi^{ks}({}(y_k)_{p_l}t+{}y_st_{p_l}){}y_k+\sum_k\psi^k{}(y_k)_{p_l}\right)\Bigg]\\\nonumber
  &=&-\frac{1}{\nabla\psi\cdot{}Y }\bigg[
  \left(\na^2\psi{}Y_{p_l}{}\cdot Y_{p_m}-\frac{\nabla\psi{}\cdot Y_{p_m}}{\nabla\psi\cdot{}Y}\na^2\psi{}Y_{p_l}{}\cdot Y\right)t\\\nonumber
  &&+\left(\na^2\psi{}Y{}\cdot Y_{p_m}-\frac{\nabla\psi{}\cdot Y_{p_m}}{\nabla\psi\cdot{}Y}\na^2\psi{}Y\cdot{}Y\right)t_{p_l}\\\nonumber
  &&+\nabla\psi{}\cdot Y_{p_mp_l}-\frac{\nabla\psi{}\cdot Y_{p_m}}{\nabla\psi\cdot{}Y}\nabla\psi{}\cdot Y_{p_l}\bigg]\\\nonumber
  &=&-\frac{1}{\nabla\psi\cdot{}Y }\bigg[
  \left(\na^2\psi{}Y_{p_m}\cdot{}Y_{p_l}-\frac{\nabla\psi\cdot{}Y_{p_m}}{\nabla\psi\cdot{}Y}\na^2\psi{}Y\cdot{}Y_{p_l}\right)t\\\nonumber
  &&+\left(\na^2\psi{}Y_{p_m}{}\cdot Y-\frac{\nabla\psi\cdot {}Y_{p_m}}{\nabla\psi \cdot {}Y}\na^2\psi{}Y\cdot{}Y\right)t_{p_l}
  +\nabla\psi{}\cdot Y_{p_mp_l}\bigg]\\\nonumber
  &&+\frac{t_{p_m}t_{p_l}}{t^2}.
\end{eqnarray}

Therefore using the calculation above and \eqref{t_k}, and rearranging the terms we infer
\begin{eqnarray}\label{t_2nd_der}
\left(\frac1t\right)_{p_mp_l}&=&-\frac{t_{p_mp_l}}{t^2}+\frac{2t_{p_m}t_{p_l}}{t^3}=\frac{1}{t(\nabla\psi\cdot{}Y)
}\bigg[
  \left(\na^2\psi{}Y_{p_m}{}\cdot Y_{p_l}-\frac{\nabla\psi{}\cdot Y_{p_m}}{\nabla\psi\cdot{}Y}\na^2\psi{}Y\cdot{}Y_{p_l}\right)t\\\nonumber
  &&+\left(\na^2\psi{}Y_{p_m}\cdot{}Y-\frac{\nabla\psi{}\cdot Y_{p_m}}{\nabla\psi{}\cdot Y}\na^2\psi{}Y{}\cdot Y\right)t_{p_l}
  +\nabla\psi{}\cdot Y_{p_mp_l}\bigg]\\\nonumber
  &=& \dfrac{1}{t(\nabla \psi\cdot Y)}\bigg[\left(\na^2\psi Y_{p_m}\cdot Y_{p_l}+\dfrac{t_{p_m}}{t}\na^2\psi Y\cdot Y_{p_l}\right)t
  \\\nonumber
  &&+\left(\na^2\psi Y_{p_m}\cdot Y+\dfrac{t_{p_m}}{t}\na^2\psi Y\cdot Y\right)t_{p_l}+\nabla \psi\cdot Y_{p_mp_l}\bigg]\\\nonumber
  &=&\dfrac{1}{t(\nabla \psi\cdot Y)}\bigg[\frac{1}{t}\na^2\psi\left(tY_{p_m}+t_{p_m}Y\right)\cdot tY_{p_l}+\frac{1}{t}\na^2\psi\left(tY_{p_m}+t_{p_m}Y\right)\cdot (t_{p_l}Y)+\nabla \psi\cdot Y_{p_mp_l}\bigg]\\\nonumber
  &=&\frac{1}{t(\nabla\psi\cdot{}Y)
}\bigg[\frac{1}{t}\na^2\psi
Z_{p_m}\cdot Z_{p_l}+\nabla\psi{}\cdot Y_{p_mp_l}\bigg].
\end{eqnarray}

We calculate the derivative of $Y$. In fact from \eqref{eq:dummy Y}, and the formula for $b$ we have
$$Y(0,v,p)=\kappa e_{n+1}+\dfrac{\kappa^2-1}{\kappa v+q}\left((p,0)-ve_{n+1}\right).$$
Then 
\begin{equation}\label{eq:two derivatives for Y}
\left[Y(\kappa v+q)\right]_{p_mp_l}=\kappa e_{n+1}q_{p_mp_l}.
\end{equation}

\subsection{Final form of $A3$}
Replacing \eqref{eq:One derivative 1/t}, \eqref{t_2nd_der}, \eqref{eq:two derivatives for Y} in \eqref{eq:form of H_{lm}}
we get
{\small
\begin{align*}
H_{lm}&=\left(\dfrac{v|\xi|^2}{\kappa^2-1}-\dfrac{(p\cdot \xi)^2}{v}\right)\bigg[\frac{\kappa v+q}{t^2(\nabla\psi\cdot{}Y)}
\na^2\psi
Z_{p_m}\cdot Z_{p_l}+\dfrac{1}{t(\nabla\psi\cdot Y)}\left((\kappa v+q)\nabla\psi{}\cdot Y_{p_mp_l}+q_{p_m}\nabla \psi\cdot Y_{p_l}+q_{p_l}\nabla \psi\cdot Y_{p_m}+q_{p_lp_m}\nabla \psi\cdot Y\right)\bigg]\\
&\qquad\qquad +\dfrac{\kappa |\xi|^2}{1-\kappa^2}q_{p_l}q_{p_m}\\
&= \left(\dfrac{v|\xi|^2}{\kappa^2-1}-\dfrac{(p\cdot \xi)^2}{v}\right)\bigg[\frac{\kappa v+q}{t^2(\nabla\psi\cdot{}Y)}
\na^2\psi
Z_{p_m}\cdot Z_{p_l}+\dfrac{\nabla \psi}{t(\nabla\psi\cdot Y)}\cdot[Y(\kappa v+q)]_{p_lp_m}\bigg]+\dfrac{\kappa |\xi|^2}{\kappa^2-1}q_{p_lp_m}\\
&=\left(\dfrac{v|\xi|^2}{\kappa^2-1}-\dfrac{(p\cdot \xi)^2}{v}\right)\bigg[\frac{\kappa v+q}{t^2(\nabla\psi\cdot{}Y)}
\na^2\psi
Z_{p_m}\cdot Z_{p_l}+\dfrac{\kappa\psi^{n+1}}{t(\nabla\psi\cdot Y)}q_{p_lp_m}\bigg]+\dfrac{\kappa |\xi|^2}{\kappa^2-1}q_{p_lp_m}\\
&=\left(\dfrac{v|\xi|^2}{\kappa^2-1}-\dfrac{(p\cdot \xi)^2}{v}\right)\left(\frac{\kappa v+q}{t^2(\nabla\psi\cdot{}Y)}
\na^2\psi
Z_{p_m}\cdot Z_{p_l}\right)+\left[\dfrac{\kappa \psi^{n+1}}{t\nabla \psi\cdot Y}\left(\dfrac{v|\xi|^2}{\kappa^2-1}-\dfrac{(p\cdot \xi)^2}{v}\right)+\dfrac{\kappa|\xi|^2}{\kappa^2-1}\right]q_{p_lp_m}.
\end{align*}

We show that $D^2q\geq 0$ for $\kappa<1$. Recall $q(0,v,p)=\sqrt{v^2+(1-\kappa^2)|p|^2}$ so for $\kappa<1$.
\begin{align*}
\sum_{l,m}q_{p_lp_m}\eta_l\eta_m&=\sum_{l,m}\left(\dfrac{1-\kappa^2}{q}\delta_{lm}-\dfrac{(1-\kappa^2)p_l p_m}{q^2}\right)\eta_l\eta_m\\
&=\dfrac{1-\kappa^2}{q}|\eta|^2-\dfrac{(1-\kappa^2)^2}{q^3}(p\cdot\eta)^2=\dfrac{1-\kappa^2}{q}\left(|\eta|^2-\dfrac{1-\kappa^2}{v^2+(1-\kappa^2)|p|^2}(p\cdot\eta)^2\right)\\
&\geq 0.
\end{align*}

Consequently returning to $H_{lm}$ we have that
\begin{equation}\label{Parxomenko}
H_{lm}
=
\left(\dfrac{v|\xi|^2}{\kappa^2-1}-\dfrac{(p\cdot \xi)^2}{v}\right)\left(\frac{\kappa v+q}{t^2(\nabla\psi\cdot{}Y)}
\na^2\psi
Z_{p_m}\cdot Z_{p_l}\right)
+
\left[\dfrac{\kappa \psi^{n+1}}{t\nabla \psi\cdot Y}\left(\dfrac{v|\xi|^2}{\kappa^2-1}-\dfrac{(p\cdot \xi)^2}{v}\right)+\dfrac{\kappa|\xi|^2}{\kappa^2-1}\right]q_{p_lp_m},
\end{equation}
where  the second term is nonpositive for $\kappa<1$. Hence, if the first term is negative definite then the MTW condition follows. 

\subsection{Proof of Theorem \ref{thm-2}} We bring the second fundamental form of receiver $\Sigma$ into the play 
Let $Z_0$ be a fixed point on $\Sigma$.
Introduce a new coordinate
system $\hat x_1, \dots, \hat x_n,
\hat x_{n+1}$ near $Z_0$, with $\hat x_{n+1}$  pointing in  $Y$ direction.
Since we require  $\nabla \psi\ne 0$, without loss of
generality we assume that near $Z_0$, in $\hat x_1, \dots, \hat x_n,
\hat x_{n+1}$ coordinate system $\Sigma$ has a representation
$\hat x_{n+1}=\phi(\hat x_1, \dots, \hat x_n).$
Recall that the second fundamental form of $\Sigma $ is
\begin{eqnarray}\label{second-form}
\mbox{II}=\frac{\p^2_{\hat x_i, \hat x_j}\phi}{\sqrt{1+|D\phi|^2}} , \qquad i,j=1,\dots, n, 
\end{eqnarray}
if we choose the normal of $\Sigma$ at $Z_0$ to be
$\frac{(-D_{\hat x_1}\phi, \dots, -D_{\hat x_n}\phi, 1)}{\sqrt{1+|D\phi|^2}}, D\phi=(D_{\hat x_1}\phi, \dots, D_{\hat x_n}\phi,0)$.

Denote  $\psi(Z)=Z^{n+1}-\phi(z)$ and assume that
near $Z_0$, $\Sigma $ is given by the equation $\psi=0$. It follows that
\begin{eqnarray}\label{Hess-psi}
 \nabla^2\psi= -\left|\begin{array}{cccc}
        \phi^{11} &\cdots & \phi^{1n} & 0\\
         \vdots  &\ddots &\vdots &\vdots \\
        \phi^{n1} &\cdots & \phi^{nn} & 0\\
        0 &\cdots & 0 & 0\\
         \end{array} \right|.
\end{eqnarray}

Therefore for  $Z=x+u e_{n+1}+tY$ we have $\nabla^2\psi  Y=0$ and hence
\begin{eqnarray}\label{2nd-f}
  \nabla^2\psi Z_{p_m}Z_{p_l}&=&\nabla^2\psi(tY_{p_m}+t_{p_m}Y)(tY_{p_l}+t_{p_l}Y)\\\nonumber
  &=&t^2\nabla^2\psi Y_{p_m}Y_{p_l}\\\nonumber
  &=&-t^2\nabla^2\phi  Y_{p_m}Y_{p_l}.\\\nonumber
\end{eqnarray}

We have $Y(\kappa\rho+q)=\kappa X (\kappa\rho+q)+ (1-\kappa^2)(e_{n+1}\rho-\na \rho)$.
Differentiating this equality with respect to  $p_k$ we infer
\[
(Y(\kappa\rho+q))_{p_k}=(\kappa X (\kappa\rho+q)+ (1-\kappa^2)(e_{n+1}\rho-p))_{p_k}. 
\]
Therefore
\begin{eqnarray}
Y_{p_k}
&=&
\frac1{\kappa \rho+q}\left[(\kappa X-Y)q_{p_k}-(1-\kappa^2)e_k\right]\\
&=&
\frac{1-\kappa^2}{\kappa \rho+q}\left[\frac{p_k}q(\kappa X-Y)-e_k\right].
\end{eqnarray}
Since $\na^2 \psi Y=0$ then it is enough to consider the projection of the vector 
$\zeta=\frac{1-\kappa^2}{\kappa \rho+q}\left[\frac{p_k}q\kappa X-e_k\right]
$ on the hyperplane $\Pi_0$ parring through $Z_0$ and perpendicular to $Y$. 
Denote $X^0, e_k^0, k=1, \dots, n$ be the projections of $X, e_k$ on $\Pi_0$. Recall that here 
$X$ is $e_{n+1}$.
Then we have that 
\begin{eqnarray*}
-\frac1{t^2}\nabla^2\psi Z_{p_m}Z_{p_l}
&=&
\sum_{l.m}\na^2 \phi Y_{p_k}Y_{p_l}\eta_k \eta_l \\
&=&
\left(\frac{1-\kappa^2}{\kappa \rho+q}\right)^2
\nabla^2\phi
\left(\frac{p_k}q\kappa X^0-e_k^0\right)
\left(\frac{p_l}q\kappa X^0-e_l^0\right)
\eta_k\eta_l\\
&=&
\left(\frac{1-\kappa^2}{\kappa \rho+q}\right)^2
\nabla^2\phi
\left(\frac{(p\cdot \eta)}q\kappa X^0-\eta^0\right)
\left(\frac{(p\cdot \eta)}q\kappa X^0-\eta^0\right).
\end{eqnarray*}

Plugging this into \eqref{Parxomenko} with dummy variable $v, p$ we get 
{\small
\begin{eqnarray}\label{eq:Final form}
\sum_{l,m}H_{lm}\eta_l\eta_m
&=&
\left(\dfrac{v|\xi|^2}{1-\kappa^2}+\dfrac{(p\cdot \xi)^2}{v}\right)
\left(\frac{(1-\kappa^2)^2}{(\nabla\psi\cdot{}Y)(\kappa v+q)}
\nabla^2\phi
\left(\frac{(p\cdot \eta)}q\kappa X^0-\eta^0\right)
\left(\frac{(p\cdot \eta)}q\kappa X^0-\eta^0\right)\right)\\\notag
&+&\left[\dfrac{\kappa \psi^{n+1}}{t\nabla \psi\cdot Y}\left(\dfrac{v|\xi|^2}{\kappa^2-1}-\dfrac{(p\cdot \xi)^2}{v}\right)+\dfrac{\kappa|\xi|^2}{\kappa^2-1}\right]q_{p_lp_m}.
\end{eqnarray}
}
Clearly, if $\psi$ is concave in $Y$ direction (see Figure \ref{fig1}) then $H$ is a negative definite 
quadratic form obtaining hence our A3 condition.
\qed
\begin{remark}\label{rem:zagalla}
If $\kappa>1$ then we demand $|\nabla \rho|^2\le \frac{\rho^2}{\kappa^2-1}$ so that 
$b$ is well defined. Then the first factor  in \eqref{eq:Final form} is negative. However, if $\nabla^2\phi$ is negative definite too then   
the first term is positive. As for the second term in \eqref{eq:Final form} then we have that $q_{p_lp_m}$ is negative definite.
Thus if $\Sigma$ is strictly concave in $Y$ direction so that the second negative term can be absorbed then we have that $H>0$ near $0$
and the A3 condition will follow.  
\end{remark}

\section{Existence  of weak solutions}\label{sec:Regularity}

\subsection{Existence of Aleksandrov solution}\label{sec:Aleksandrov}
In \cite{Gut-Hua} Gutierrez and Huang used cartesian ovals to construct  weak solutions to the near field refractor problem. Given a point $P$ in a medium $n_2$ and a point $O$ in medium $n_1$, with $n_1\leq n_2$, the Cartesian oval $\mathcal O(P,b)$, $b>0$ is the set of points $M\in \R^{n+1}$ such that
$$|M|+\frac 1\kappa |M-P|=b.$$
Recall $\kappa=\dfrac{n_1}{n_2}$, and we consider the case when $\kappa<1$, see Figure \ref{fig1}. 

If $|P|<b<\dfrac{1}{\kappa}|P|$  we define then the lower part of the oval defined as follows
$$\mathcal O^{-}(P,b)=\left\{M\in O(P,b):\dfrac{M}{|M|}\cdot P\geq \kappa^2\left(b+\sqrt{\left(\dfrac{1}{\kappa^2}-1\right)\left(\dfrac{1}{\kappa^2}|P|^2-b^2\right)}\right)\right\}.$$
Assume $\mathcal O^-(P,b)$ separates the media $n_1$ and $n_2$. It is proved in \cite{Gut-Hua}, 
that if $X\in \S^n$ such that 
\[
X\cdot P\geq \kappa^2\left(b+\sqrt{\left(\dfrac{1}{\kappa^2}-1\right)\left(\dfrac{1}{\kappa^2}|P|^2-b^2\right)}\right),
\]
then the ray emitted from $O$ with direction $X$ is refracted by $\mathcal O^-(P,b)$ into $P$. $\Ov^{-}$ can be given radially by the function
$$h(X,P,b)=\dfrac{\frac{1}{\kappa^2}X\cdot P-b-\sqrt{\left(\dfrac{1}{\kappa^2}X\cdot P-b\right)^2-(\dfrac{1}{\kappa^2}-1)(\dfrac{1}{\kappa^2}|P|^2-b^2)}}{\dfrac{1}{\kappa^2}-1}.$$

Using these ovals the authors in \cite{Gut-Hua} showed the existence of a Brenier solution to the near field refractor problem, in case the 
target and the source satisfy some visibility conditions, see Section 6 in the mentioned paper. A solution $\Gamma=\{\rho(x)X, \ X\in \U\}$ is 
constructed such that at every point it is supported by a refracting oval $\mathcal O^-(P,b)$ for some $b>0$, and $P\in \V$,  that is 
for every $X_0\in \U$ there exists $P\in \V$ and $b>0$ such that
$$\rho(x)+\dfrac{1}{\kappa}|\rho(x)X-P|\leq P \text{\,\,\,\,\,\,for all $X\in \U$ with equality at $X=X_0$}.$$
The first visibility condition requires 
\begin{equation}\label{bychkova1}
X\cdot \dfrac{P}{|P|}\geq \kappa+\tau\,  \mbox{for some}\,  0<\tau<1-\kappa, 
\end{equation}
this to ensure that the ray touches $\mathcal O^{-}$ the refracting part of the oval, at a positive angle,  i.e. not tangentially. The second visibility condition requires that 
\begin{equation}\label{bychkova2}
\mbox{in a small cone} \, Q_{r_0}\, \mbox{with vertex}\,  O, \, \mbox{every line passing through} \, O\,  \mbox{intersects}\,  \V\,  \mbox{at at most one point}.
\end{equation} 

If $\U$, and $\V$ satisfy the above visibility conditions, and $f\in L^1(\U)$, $g\in L^1(\V)$ positive then the authors showed that if $M_0\in Q_{r_0}$ there exists a refractor as described above passing through $M_0$ such that for every Borel set $G\in \V$, 
$$\int_{\mathcal R^{-1}(G)} f=\int_G g,$$
where $\mathcal R$ is the imaging map induced by the refractor $\rho$. Moreover, it is shown that the solution has a positive distance from the source $O$ and the target $\V$.

We show in this section that every Brenier solution is an Aleksandrov solution by defining a Legendre type transform. Recall that $\V\subseteq \Sigma$, with $\Sigma$ given implicitly by $\psi(Z)=z_{n+1}-\varphi(z)=0$.

\begin{theorem}\label{thm:Aleks}
If $\U$ and $\V$ satisfies the visibility conditions \eqref{bychkova1}, \eqref{bychkova2} and $\rho$ is a Brenier solution to the near field refractor problem then $\rho$ is an Aleksandrov type solution, i.e. for every Borel set $E\in \U$ we have
$$\int_{E}f=\int_{\mathcal R(E)}g.$$
\end{theorem}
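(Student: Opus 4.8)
The plan is to attach to the Brenier refractor $\rho$ a Legendre--type transform $\rho^{*}$, built from the Cartesian ovals, and to show that the ray--tracing map $\mathcal T$ of $\rho$ and the ray--tracing map $\mathcal T^{*}$ of $\rho^{*}$ are inverse to one another off Lebesgue--null sets; the Aleksandrov identity then drops out by combining this bijectivity with the Brenier push--forward property. For $P\in\Sigma$ and $X=(x,x_{n+1})\in\U$ put $\phi_{P}(X)=\rho(x)+\tfrac1\kappa|\rho(x)X-P|$, a smooth function of $(X,P)$ with derivatives bounded uniformly on the compact range in play (here $\H4$ is used), and set $\rho^{*}(P)=\sup_{X\in\U}\phi_{P}(X)$. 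For each $P$ the oval $\Ov^{-}\!\big(P,\rho^{*}(P)\big)$ is precisely the refracting oval with focus $P$ that lies below $\Gamma$ and touches it; since by the Guti\'errez--Huang construction $\rho$ is the upper envelope of its supporting ovals, one gets the biconjugacy relation $\rho(x)=\sup_{P\in\V}h\big(X,P,\rho^{*}(P)\big)$, i.e.\ $\rho^{**}=\rho$, with $h$ the oval radial function recalled above. Being a supremum of the smooth functions $X\mapsto h(X,P,b)$ (resp.\ $P\mapsto\phi_{P}(X)$), $\rho$ and $\rho^{*}$ are locally semiconvex, hence differentiable outside Lebesgue--null sets $N_{\U}\subset\U$ and $N_{\V}\subset\V$; as the positive densities $f$ and $g$ are absolutely continuous with respect to $d\sigma(\S^{n}_{+})$ and $d\sigma(\Sigma)$, these exceptional sets carry no $f$--, resp.\ $g$--, mass.

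Next I would examine the tracing maps: $\mathcal T\colon\U\to\V$ sends $X$ to the focus of a supporting oval of $\rho$ at $X$, and $\mathcal T^{*}\colon\V\to\U$ sends $P$ to a point at which $\phi_{P}$ attains its supremum. Off $N_{\U}$ (resp.\ $N_{\V}$) this oval (resp.\ this point) is unique, so there $\mathcal T$ and $\mathcal T^{*}$ are single valued and continuous. At a touching point the oval $\Ov^{-}(P,b)$ and $\Gamma$ share a tangent plane, hence a unit normal, so Snell's law \eqref{eq:Snell} yields the same refracted direction; that direction points toward $P$, and the visibility conditions \eqref{bychkova1}--\eqref{bychkova2} (transversal touching of the refracting sheet, and at most one intersection of each ray through $O$ with $\V$) force the refracted ray to meet $\Sigma$ for the first time exactly at $P\in\V$. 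Hence $Z(x)=\mathcal T(X)$ for $X\notin N_{\U}$ and $\mathcal R$ coincides with $\mathcal T$ as set--valued maps. By the biconjugacy $\rho^{**}=\rho$ and the standard calculus of $c$--convex (generating) functions, $\mathcal T$ and $\mathcal T^{*}$ are mutually inverse off Lebesgue--null sets; in particular $\mathcal T$ is injective off such a set, and pushing the Brenier identity $\mathcal T_{\#}\big(f\,d\sigma(\S^{n}_{+})\big)=g\,d\sigma(\Sigma)$ forward through $\mathcal T^{*}$ gives $\mathcal T^{*}_{\#}\big(g\,d\sigma(\Sigma)\big)=f\,d\sigma(\S^{n}_{+})$.

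To conclude, fix a Borel set $E\subset\U$. Then $\mathcal R(E)=\mathcal T(E)$ is analytic, hence $g$--measurable, and the mutual inverseness of $\mathcal T$ and $\mathcal T^{*}$ forces $\mathcal T(E)$ and $(\mathcal T^{*})^{-1}(E)$ to coincide up to a set on which $\mathcal T$ or $\mathcal T^{*}$ is multivalued, which is $g$--null by the pushforward identities above. Therefore
\[
\int_{\mathcal R(E)}g\,d\sigma(\Sigma)=\int_{\mathcal T(E)}g\,d\sigma(\Sigma)=\int_{(\mathcal T^{*})^{-1}(E)}g\,d\sigma(\Sigma)=\int_{E}f\,d\sigma(\S^{n}_{+}),
\]
the last equality being $\mathcal T^{*}_{\#}\big(g\,d\sigma(\Sigma)\big)=f\,d\sigma(\S^{n}_{+})$. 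This is the asserted Aleksandrov identity.

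The substantive point is the middle step: establishing the biconjugacy $\rho^{**}=\rho$ — which is exactly where the \emph{global} (rather than merely local) support property of the Guti\'errez--Huang solution is needed — and verifying that no mass of $f$ or $g$ escapes through the non--differentiability sets $N_{\U},N_{\V}$ or through $\partial\U,\partial\V$. The two visibility hypotheses are what make this go through: \eqref{bychkova1} keeps the touching point on the refracting sheet and makes it depend continuously on the data, while \eqref{bychkova2} keeps the refracted ray inside the region where $\Sigma$ is a graph, so that $\mathcal R$ and $\mathcal T$ genuinely agree; granting these, the remaining bookkeeping is the standard change of variables for $c$--monotone maps, as carried out for the reflector in \cite{KW}.
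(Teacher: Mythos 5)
Your overall strategy is the same as the paper's: attach to $\rho$ the Legendre--type transform $\rho^{*}(Z)=\sup_{X}\bigl(\rho(x)+\tfrac1\kappa|\rho(x)X-Z|\bigr)$, show it is differentiable off a Lebesgue--null set (Lipschitz/semiconvexity, using $\H4$ to keep the distance term smooth), and use this to see that the "double--hit" set $\mathcal S=\{Z\in\V: Z\in\mathcal R(X_1)\cap\mathcal R(X_2),\ X_1\neq X_2\}$ is null, whence Brenier implies Aleksandrov. The difference is largely one of framing: you package the conclusion as biconjugacy $\rho^{**}=\rho$ plus a.e.\ mutual inverseness of the tracing maps $\mathcal T,\mathcal T^{*}$, and then push the Brenier identity through $\mathcal T^{*}$; the paper simply shows $\mathcal S$ is contained in the non--differentiability set of $\rho^{*}$ and then invokes the change--of--variables argument from \cite{Gut-Sab:Freeform}. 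Also, incidentally, you want $\mathcal S$ to be $g$--null because $g\,d\sigma(\Sigma)\ll$ Lebesgue on $\Sigma$ and $\mathcal S$ has Lebesgue measure zero --- not "by the pushforward identities", which is circular as you have arranged it.

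The one place you lean on a citation where the paper must actually work is the assertion that "by the standard calculus of $c$--convex (generating) functions, $\mathcal T$ and $\mathcal T^{*}$ are mutually inverse off Lebesgue--null sets." In the optimal--transport setting this follows from the \emph{twist condition}, but the near--field refractor is not a mass--transport problem (it is a generated Jacobian / prescribed Jacobian problem), and the analogue of twist here is precisely what has to be checked with the Cartesian--oval geometry. Concretely: one must show that if $Z$ is a differentiability point of $\rho^{*}$ and the sup is attained at both $X_1$ and $X_2$, then $X_1=X_2$. The paper does this by differentiating $\rho(x)+\tfrac1\kappa|\rho(x)X-Z|$ in $Z$ for $Z=(z,\phi(z))$, equating the two gradients, and observing that this forces the refracted directions $Y(x_1)$ and $Y(x_2)$ to agree (using $\H3$ to pass from equality of tangential components to equality of full directions); then the line through $Z$ with direction $Y$ would meet the \emph{same} half--oval $\Ov^{-}(Z,b)$ at two distinct touching points, a contradiction. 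You should include this computation (or an equivalent verification of injectivity of $X\mapsto D_Z\phi_Z(X)$) rather than appeal to general $c$--convexity, since absent it, the "mutual inverse a.e." claim is unsupported. Once that is supplied, your argument and the paper's coincide.

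One small sign of confusion to fix: the Brenier solution of Guti\'errez--Huang is the \emph{lower} envelope of the supporting ovals in the radial sense ($\rho(x)\le h(X,P,b)$ with equality at the touching direction), so the biconjugacy you want is $\rho(x)=\inf_{P}h\bigl(X,P,\rho^{*}(P)\bigr)$, not a supremum. This does not change the structure of the argument, but it matters for getting the semiconvexity/semiconcavity and the direction of the support inequalities right.
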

\begin{proof}
Recall that $\V\subseteq \Sigma$ with $\Sigma$ given implicitly by the function $\psi$ satisfying conditions $\H1,\H2$. Let $Z=(z,z_{n+1})\in \V$. Define the Legendre type transform
\begin{equation}\label{b-def-max}
b(Z)=\max_{X=(x,x_{n+1})\in \overline \U}( \rho(x)+|\rho(x)X-Z|).
\end{equation}
Let $X_0$ be a point where the maximum is attained then for every $X\in \U$
$$\rho(x)+\dfrac{1}{\kappa}|\rho(x)X-Z|\leq b(Z),$$
with equality at $X=x_0$, and hence the oval $\Ov(Z,b)$, supports $\rho$ at $\rho(x_0)X_0$. Notice that from the visibility condition \eqref{bychkova1} we have that $X_0\cdot \dfrac{Z}{|Z|}\geq \kappa+\tau$, and so the refracting part $\mathcal O^-(Z,b)$ is supporting $\rho$ i.e. the ray $\rho(x_0)X_0$ is refracted into $Z$.

We claim that $b$ is differentiable almost everywhere as a function of $z$. In fact,  since the solution $\rho$ is uniformly away from $\V$ then $b(Z)$ is the minimum of $C^2$ functions $\rho(x)+\sqrt{(\rho(x)x-z)^2+(\rho x_{n+1}-\phi(z))^2}$ of $z$ (for $x$ fixed) with bounded Hessian. Thus $b(Z)$ is semi-concave and so locally Lipschitz, and hence differentiable almost everywhere.

Given the set $$\mathcal S=\{Z\in \V: Z\in \mathcal R(X_1)\cap \mathcal R(X_2): \text{for $X_1\neq X_2$ in $\U$}\}.$$
We claim that if $Z\in \mathcal S$ then $Z$ is a singular point for the function $b(Z)$ and hence $|S|=0$. 
In fact, given $Z\in \mathcal S$ then there exists $b_1,b_2\in \R$ such that the oval $\mathcal O(Z,b_1)$ supports $\mathcal S$ at $X_1$, and $\mathcal O(Z,b_2)$ supports $\mathcal S$ at $X_2$. Notice that if $b_1<b_2$ then 
$$\rho(x_2)+\dfrac{1}{\kappa}|\rho(x_2)X_2-Z|=b_2>b_1,$$ this is a contradiction since $\mathcal O(Z,b_1)$ supports $\rho$. So $b_1=b_2:=b$. 
Hence the maximum 
in \eqref{b-def-max} (i.e. $b(Z)$) is attained at $X_1$ and $X_2$, and so
$$b(Z)=\rho(x_1)+\dfrac{1}{\kappa}|\rho(x_1)X_1-Z|=\rho(x_2)+\dfrac{1}{\kappa}|\rho(x_2)X_2-Z|.$$
If $Z$ is a differentiability point for $b$, then by the supporting property of the ovals we get
$$D_zb=D_z \left[\rho(x_1)+\dfrac{1}{\kappa}|\rho(x_1)X_1-Z|\right]=D_z\left[\rho(x_2)+\dfrac{1}{\kappa}|\rho(x_2)X_2-Z|\right].$$
Recall that $Z=(z,\varphi(z))$, $X_1=(x_1,x_1^{n+1})$, $X_2=(x_2,x_2^{n+1})$ then above identity implies
$$\dfrac{1}{\kappa}\dfrac{z-\rho(x_1)x_1+\nabla \phi\left[\phi(z)-x_1^{n+1}-\phi(z)\right]}{|\rho(x_1)X_1-Z|}=\dfrac{1}{\kappa}\dfrac{z-\rho(x_1)x_1+\nabla \phi\left[\phi(z)-x_1^{n+1}-\phi(z)\right]}{|\rho(x_1)X_1-Z|},$$
recall that $t(x)=|Z-\rho(x)X|$ and $Z-\rho(x)X=t(x)Y$. Simplifying above identity we get
$$\dfrac{t(x_1)y(x_1)+\nabla \phi(z) t(x_1)y_1^{n+1}}{t(x_1)}=\dfrac{t(x_2)y_2+\nabla \phi(z) t(x_2)y_1^{n+1}}{t(x_2)}.$$
Thus
\[
y(x_1)-y(x_2)+\nabla \phi(z)(y_{n+1}(x_1)-y_{n+1}(x_2))=0
\]
and so
$$Y(x_1)-Y(x_2)=(-\nabla \phi,1)(y_{n+1}(x_1)-y_{n+1}(x_2)).$$
Hence $Y(x_1)-Y(x_2)$ is orthogonal to $\na \psi$. Let $\tau$ be a vector tangent to $\psi$, hence  we have
$Y(x_1)\cdot \tau=Y(x_2)\cdot \tau$, and $Y(x)\cdot \nabla \psi$ is positive for all $x$ (by $\H3$)and so 
$Y(x_1)=Y(x_2)$. Then the line passing through $Z$ with direction $Y$ intersects the refractor at two points $\rho(x_1)X_1$ and $\rho(x_2)X_2$ but these are two points on the same half oval $\mathcal O^-(Z,b)$  and so $x_1=x_2$ a contradiction, and the claim follows.

Having showed that $|\mathcal S|=0$, proceeding as in Theorem 6.7 \cite{Gut-Sab:Freeform} it follows that the Brenier solution constructed in \cite{Gut-Hua} is in fact an Aleksandrov solution.
\end{proof}

\section{Regularity of weak solutions}
\subsection{R-convexity of $\V$}\label{subsec:R convex}

\begin{definition}[Refraction cone]
Let  $\nu_1, \nu_2$ be unit vectors and set  
$\nu_{c_1, c_2}=\frac{\nu_1 c_1+\nu_2c_2}{|\nu_1 c_1+\nu_2c_2|}$
for any two constants $c_1, c_2$. Let the unit vectors $Y_{c_1, c_2}$ be determined from the 
identity 
$$X-\dfrac{n_2}{n_1} Y_{c_1,c_2}=\lambda_{\nu_{c_1, c_2}} \nu_{c_1, c_2}$$
where $X=\xi/|\xi|$
 with $\lambda_\nu=\xi\cdot \nu-\dfrac{n_2}{n_1}\sqrt{1-\left(\dfrac{n_1}{n_2}\right)^2(1-(X\cdot \nu)^2)},$.
The two dimensional cone 
\[
C_{\xi, \nu_1, \nu_2}=\{\xi+tY_{c_1, c_2}, t>0, c_1\ge 0, c_2\ge 0\}
\] 
with vertex  
$\xi$
and spanned by the vectors $Y_{c_1, c_2}$ 
 is called a refraction cone at $\xi$.
\end{definition}

\begin{definition}[R-convexity]\label{def-R-conv}
  We say that $\V\subset \Sigma$ is $R-$convex with respect to a point $\xi$ in the cone 
  $C_\U=\{tX, t>0, X\in \U\}$
  if for any two unit vectors $\nu_1, \nu_2$ the intersection $C_{\xi, \nu_1, \nu_2}\cap \V$
  is connected. If $\V$ is $R-$convex with respect to any $\xi\in C_\U$ then we simply say that
  $\V$ is $R-$convex.
\end{definition}
In particular a  geodesic ball on the  convex  surface $\Sigma$ is an example of $R-$convex $\V$.

\subsection{Local supporting function is also global}\label{sec:loc glob}
Recall that  the supporting 
ovaloid $\Ov$  is below of the refractor 
for every $X\in \U$ which follows from the discussion in Section \ref{sec:Regularity}. 
Consequently, one may wonder if the locally admissible refractors (i.e. $\Ov$ contains  the refractor  only in a vicinity of the contact point) are also globally contained in $\Ov$.   
This issue was addressed by G. Loeper in \cite{Loep} for the optimal transfer problems. For the refractor problem we have  


\begin{lemma}\label{lem:loc-glob}
Under the condition \eqref{def-MTW} a local supporting ovaloid is also global.
\end{lemma}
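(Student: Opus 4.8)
The statement is Loeper's ``local-to-global'' principle for $c$-convex potentials \cite{Loep}, rephrased for refracting ovaloids, so the plan is to set up the corresponding dictionary and then imitate his argument. I would parametrize ovaloids by a focus $P\in\V$ and a level $b>0$ and write $g(x,P,b)=h(X,P,b)$ for the radial function of $\mathcal O^-(P,b)$; then ``$\mathcal O^-(P,b)$ supports $\rho$ at $X$ from below'' means $\rho(x)\le g(x,P,b)$ with equality at $X$, i.e.\ $g(\cdot,P,b)$ is a $g$-support of $\rho$ in the generating-function formalism. By the Gutierrez--Huang construction the Brenier solution $\rho$ is $g$-convex: every $X\in\U$ carries a \emph{global} supporting ovaloid with focus in $\V$, and by \eqref{bychkova1} the contact takes place on the refracting branch and transversally. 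The key pieces of the dictionary are: the matrix $H$ of \eqref{eq:form of H_{lm}} is, up to a positive factor, the Ma--Trudinger--Wang tensor of $g$, so \eqref{def-MTW} is the strong A3 condition for $g$; and $R$-convexity of $\V$ (Definition \ref{def-R-conv}) is exactly $g$-convexity of $\V$ as seen from each point $\xi\in C_\U$, which keeps the $g$-segments of foci used below inside $\V$ and on the refracting branch. Condition $\H4$ and the a priori bounds on $\rho$ from \cite{Gut-Hua} supply the uniform $C^2$ control of $g$ that Loeper's estimates require.

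I would then argue by contradiction. Suppose $\mathcal O_0=\mathcal O^-(P_0,b_0)$ supports $\rho$ on a neighborhood $N$ of $X_0$ but not globally, and pick $X_1\in\U$ with $\rho(x_1)>h(x_1,P_0,b_0)$. Since $\rho$ is $g$-convex there is a global supporting ovaloid $\mathcal O^-(P_1,b_1)$, $P_1\in\V$, touching $\rho$ at $X_1$, so $h(\cdot,P_1,b_1)\ge\rho$ everywhere and $\nabla_x h(x_1,P_1,b_1)=\nabla\rho(x_1)$. A refracting ray issued from a point of the refractor determines its oval's focus on $\V$, so $\mathcal O^-(P_1,b_1)$ cannot touch $\rho$ at $X_0$ too (that would force $(P_1,b_1)=(P_0,b_0)$ and contradict non-globality); hence $h(x_0,P_1,b_1)>\rho(x_0)=h(x_0,P_0,b_0)$. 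Next I would slide from $P_0$ to $P_1$ along the $g$-segment $\{P_t\}_{t\in[0,1]}$ taken with respect to $X_1$ and normalized so that $h(x_1,P_t,b_t)=\rho(x_1)$; at $t=0$ this produces $\mathcal O^-(P_0,b_0')$ with $b_0'>b_0$, so $h(\cdot,P_0,b_0')>h(\cdot,P_0,b_0)\ge\rho$ on $N$, and at $t=1$ it returns $\mathcal O^-(P_1,b_1)$. Loeper's maximum principle, valid precisely because $g$ satisfies A3w, then gives
\begin{equation*}
h(x,P_t,b_t)\ \ge\ \min\!\big(h(x,P_0,b_0'),\,h(x,P_1,b_1)\big)\qquad(x\in\U,\ t\in[0,1]),
\end{equation*}
so $h(\cdot,P_t,b_t)\ge\rho$ on $N$ for every $t$, while $h(x_1,P_t,b_t)=\rho(x_1)$ throughout. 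For $t<1$ the refracted direction $\nabla_x h(x_1,P_t,b_t)$ differs from $\nabla\rho(x_1)$, so $\mathcal O^-(P_t,b_t)$ passes through $\rho(x_1)X_1$ without being tangent and hence drops below $\rho$ near $X_1$; lowering $b$ on it until it meets $\rho$ again from below yields a new local supporting ovaloid whose contact point, by connectedness of the contact sets under A3 together with the visibility condition \eqref{bychkova2}, must lie in $N$. But on $N$ every ovaloid of the sliding family is $\ge\rho$, and comparing gradients at that contact point identifies the ovaloid with $\mathcal O_0$, whose level $b_0$ lies strictly below every global-support level, a contradiction. Hence $\mathcal O_0$ is global.

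The hard part will be the last step: quantifying how the contact sets of the sliding family $\mathcal O^-(P_t,b_t)$ move and ruling out that they ``jump over'' the fixed neighborhood $N$. This is exactly where the strict inequality in \eqref{def-MTW} (not merely A3w), the $R$-convexity of $\V$, and the Gutierrez--Huang visibility hypotheses \eqref{bychkova1}--\eqref{bychkova2} are used, just as connectedness of $c$-segments and of contact sets is used in \cite{Loep}. A secondary technical point is to verify that \eqref{def-MTW}, established at $x=0$ in Theorem \ref{thm-2} and then at every point by rotation, genuinely upgrades to Loeper's maximum principle for $g$ throughout $\U\times\V$, which requires the uniform regularity and nondegeneracy of $g$ furnished by $\H1$--$\H4$.
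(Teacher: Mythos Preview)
Your outline is in the right Loeper framework, but it diverges from the paper's argument and, as you yourself flag, the final contradiction step is not actually carried out. Two concrete issues. First, your sign convention is inverted: in this paper the supporting ovaloids lie \emph{below} the refractor (Section~\ref{sec:loc glob} opens with exactly this remark), so a support means $\rho\ge h(\cdot,P,b)$, not $\rho\le h$. Consequently Loeper's maximum principle here reads $\rho_\theta\le\max(\rho_1,\rho_2)$, not $h_t\ge\min(h_0,h_1)$; the latter inequality is not what A3 gives and your subsequent chain of deductions from it does not go through. Second, even granting the correct inequality, your ``lower $b$ until it meets $\rho$ again'' step and the claim that the new contact point must fall in $N$ by ``connectedness of contact sets'' is not an argument; you would need to show the contact locus of a one-parameter family cannot jump, and that is exactly the content of the lemma, so the reasoning is circular as written.

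The paper takes a different and more hands-on route. Rather than quoting Loeper's maximum principle as a black box and arguing by contradiction from a non-global support, it starts from two \emph{global} supporting ovaloids $\Ov_1,\Ov_2$ at the same point $M_0$, builds the interpolated ovaloid $\Ov_{\theta_0}$ with $D\rho_{\theta_0}(0)=\theta_0 D\rho_1(0)+(1-\theta_0)D\rho_2(0)$, and then \emph{derives} the Loeper inequality $\max[\rho_1,\rho_2]\ge\rho_{\theta_0}$ on a small ball by a second-order Taylor expansion: the key step is that $\frac{d^2}{d\theta^2}D^2_{\tau\tau}\rho_\theta(0)\ge c_0\alpha^2$ follows directly from \eqref{def-MTW}, which yields the quantitative concavity estimate \eqref{blya-4}. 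This gives the local statement. The global upgrade is then a separate geometric argument about the contact curves $\Lambda_{1,2}=\{\rho_1=\rho_2\}$ and $\Lambda_{1,\theta_0}=\{\rho_1=\rho_{\theta_0}\}$: one shows, using a graph representation over a suitable hyperplane (guaranteed by the visibility aperture \eqref{bychkova1}) and re-applying the local estimate at every intersection point, that $\Lambda_{1,\theta_0}$ can only touch $\Lambda_{1,2}$ tangentially from one side and hence never crosses it (Figure~\ref{fig2}). That inclusion principle replaces your sliding/connectedness heuristic and is where the strict constant $c_0>0$ and the $R$-convexity of $\V$ are genuinely used.
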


\begin{proof}
The proof is very similar to that of in \cite{Loep}, \cite{Trud-Wang-strc}.
Let $\Ov_i=\Ov(Z_i, b_i), i=1, 2$ be two global supporting ovaloids of $\rho$ at $M_0$ such that 
the contact set $\Lambda\not=\{M_0\}$. Thus $\rho$ is not differentiable at $M_0$. To fix the ideas take $M_0=e_{n+1}\rho(0)$.

We claim that if $\gamma_i$ is the normal of the graph of $\Ov_i, i=1,2$ at $M_0$
then for any   $\theta\in(0, 1)$ there is $Z_{\theta}\in \Sigma\cap \mathcal C_{M_0, \gamma_1,\gamma_2}$ and $b_{\theta}>0$
such that  $\Ov_\theta=\Ov(Z_{\theta}, b_{\theta})$  is a local supporting ovaloid 
of $\rho$ at $0$ and 
\begin{equation}\label{blya-2}
D\rho_{\theta}(0)=\theta D\rho_1(0)+(1-\theta)D\rho_2(0), 
\end{equation} 
where $\rho_\theta, \rho_1, \rho_2$ are the radial functions of the 
ovaloids $\Ov_\theta, \Ov_1, \Ov_2$, respectively. 
Notice that since the radial functions $\rho_i, i=1,2$ of ovaloids are smooth then the slope of the tangent plane 
of $\rho_\theta$ is the linear combination of that of $\rho_1$ and $\rho_2$.

Moreover, the correspondence $\theta\mapsto Z_\theta$ is one-to-one thanks to the 
assumption $\nabla \psi\cdot Y>0$. 
By choosing a suitable coordinate system we can assume that $D\rho_1(0)-D\rho_2(0)=(0, \dots, 0, \alpha)$ for some 
$\alpha>0$. 
Then  we have that for all $0<\theta<1$
\begin{eqnarray}\label{blya-3}
\max[\rho_1(x), \rho_2(x)]&\ge &\theta \rho_1(x)+(1-\theta)\rho_2(x)\\\nonumber
&=&\rho(0) + \left[D\rho_2(0)+\alpha\theta\right]x_n+\frac12\left[\theta D^2\rho_1(0)+(1-\theta)D^2 \rho_2(0)\right]{x\otimes x}+o(|x|^2)
\end{eqnarray}
where the last line follows from Taylor's expansion.
 
Using the notations of Section \ref{sec:Regularity} we have that 
\begin{equation}\label{kiselev}
D^2\rho_\theta(0)=-H(0, \rho(0), p_1+\theta(p_2-p_1))
\end{equation}
where $H$ is the matrix \eqref{eq:form of H_{lm}} (see Section \ref{sec:MTW}), and we set $p_i=D\rho _i(0), i=1,2$ and used \eqref{blya-2}. For all unit vectors $\tau$ perpendicular to $x_n$ axis we have from \eqref{kiselev}
\begin{eqnarray}
\frac{d^2}{d\theta^2}D^2_{\tau\tau}\rho_\theta(0)
&=&
-\frac{d^2}{d\theta^2}H_{ij}(0, \rho(0), p_1+\theta(p_2-p_1))\tau_i\tau_j
\\\nonumber
&=&
-\alpha^2\frac{\partial^2}{\partial p_n^2} H_{jj}(0, u(0), p_1+\theta(p_2-p_1))\tau_i\tau_j
\\\nonumber
&\ge&
\alpha^2 c_0
\end{eqnarray}
where the last line follows from \eqref{def-MTW} with $c_0>0$.

Therefore
\begin{equation}\label{blya-4}
D^2_{\tau\tau}\rho_\theta(0)\le\theta D^2_{\tau\tau}H_1(0)+(1-\theta)D^2_{\tau\tau}H_2(0)-\widehat{ c_0}\theta(1-\theta)|p_1-p_2|^2
\end{equation}
where $\widehat{c_0}$ depends on $c_0$.

Differentiating $\rho(x)+\frac1{\kappa}\sqrt{(\rho(x)x-z)^2+(\rho(x)x_{n+1}-\phi(z))^2}$
in $x_i, 1\le i\le n$ we get 
\begin{equation}
\p_i\rho+\frac1{t\kappa}
\left\{
(\rho(x)x_m-z_m)(\p_i\rho(x)x_m+\rho(x)\delta_{im})+(\rho(x)x_{n+1}-\phi(z)(\p_i\rho(x)x_{n+1}-\rho\frac{x}{x_{n+1}})
\right\}=0
\end{equation}
or equivalently 
\begin{equation}
\na \rho+\frac1{t\kappa}
\left\{
(x\rho-z)\rho+[(x\rho-z)\cdot x]\na \rho+\na \rho(\rho x_{n+1}-\phi(z))x_{n+1}-\rho \frac x{x_{n+1}}(\rho x_{n+1}-\phi(z))
\right\}=0
\end{equation}
Simplifying this expressions and solving with respect to $\na \rho$ we find that 
\[
\na \rho\left(1+\frac1{t\kappa}(\rho-Z\cdot X)\right)=-\frac1{t\kappa}(x\frac\phi{x_{n+1}}-z)\rho
\]
and therefore we conclude that 
\[
\na\rho=-\frac{\rho}{\rho-Z\cdot X+t\kappa}\left(x\frac\phi{x_{n+1}}-z\right).
\]
Observe that at $x=0$ this formula yields that  
\begin{equation}
\theta p_1+(1-\theta)p_2
= \frac{\theta\rho_1(0)}{\rho_1(0)-\phi(z_1)+t\kappa}z_1
+
\frac{(1-\theta)\rho_2(0)}{\rho_2(0)-\phi(z_2)+t\kappa}z_2, 
\end{equation}
where we assume that $\Sigma$ is the graph of a function $\phi$  such that 
for $\psi(Z)=Z^{n+1}-\phi(z)$ the form  \eqref{eq:Final form} is negative definite.
From these $n+1$ equations we see that $z_\theta$ and $b_\theta=b(z_\theta)$ are smooth functions of $\theta p_1+(1-\theta)p_2$.
This yields  the following crude estimate for the remaining second order derivatives
\begin{eqnarray}\label{blya-5}
\left|\theta D_{jn}^2H_1(0)+(1-\theta)D_{jn}^2H_2(0)-D^2_{jn}H(0)\right|\le C\theta(1-\theta)|p_1-p_2|^2,\quad  j=1, \dots, n, 
\end{eqnarray}
where $C$ depends of $C^2$ form of $\phi$.
Consequently, after plugging \eqref{blya-4} and \eqref{blya-5} into \eqref{blya-3} and recalling that 
$|p_1-p_2|=\alpha$ we conclude 

\medskip
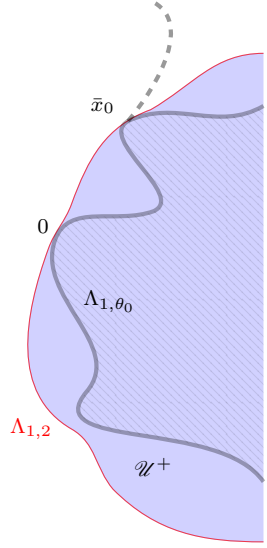
\begin{figure}[h]
\begin{center}
 \begin{tikzpicture}

\filldraw[blue!60!,opacity=.3] (3,3) to[out=180,in=30] (1.5,2.25) 
to[out=200,in=70] (0.45,1) to[out=250,in=70] (0.14,0.4)
to[out=-110,in=150] (0.45, -2)to[out=-30,in=130] (1, -2.8)to[out=-45,in=180]
(3, -3.5);

\draw[red,opacity=.7] (3,3) to[out=180,in=30] (1.5,2.25) 
to[out=200,in=70] (0.45,1) to[out=250,in=70] (0.14,0.4)
to[out=-110,in=150] (0.45, -2)to[out=-30,in=130] (1, -2.8)to[out=-45,in=180]
(3, -3.5);

\filldraw[black,opacity=.3, ultra thick, pattern=north west lines] 
(3,2.3)
to[out=210, in=25]
(1.2, 2.1)
to[out=210, in=25]
(1.54,0.9)
to[out=200,in=70]
(0.23, 0.52)
to[out=250,in=50] 
(0.65, -1.5)to[out=230, in=125](3, -2.7);

\draw[ultra thick, black, opacity=.4, dashed]
(1.2,2.1)to[out=50, in=-25](1.5,3.7);

\node[left] at (0.3, -2) {\textcolor{red}{$\Lambda_{1,2}$}};
\node[right] at (0.5, -0.3) {$\Lambda_{1, \theta_0}$}; 

\node[right] at (0.6, 2.3) {$\bar x_0$};
\node[left] at (0.25, 0.7) {$0$}; 
\node[left] at (1.9, -2.5) {$\U^+$}; 

\end{tikzpicture}
\end{center}
\caption{The inclusion principle: the $A3$ condition demands that after striking $\bar x_0$ the 
set $\Lambda_{1,\theta_0}$ cannot cross $\Lambda_{1,2}$, in other words $\Lambda_{1, \theta_0}$
cannot leave $\U^+$ (the blue region) and continue along the dashed surface.}\label{fig2}
\end{figure}

\begin{eqnarray}\nonumber
\max[\rho_1(x), \rho_2(x)]
&\ge & 
\rho(0)+\left[p_2+\alpha\theta\right] x_n+\frac12\left[\theta D^2\rho_1(0)+(1-\theta)D^2 \rho_2(0)\right]{x\otimes x}+o(|x|^2)\\\nonumber
&\ge&
\rho(0)+\left[p_2+\alpha\theta\right] x_n+
\frac12 D^2\rho_\theta (0){x\otimes x}+\widehat{ c_0}\theta(1-\theta)\alpha^2\sum_{j=1}^{n-1}x_j^2
- C\theta(1-\theta)\alpha^2|x_n||x|+ o(|x|^2)\\\nonumber
&=&
\rho(0)+\left[p_2+\alpha\theta\right] x_n+
\frac12 D^2\rho_\theta (0){x\otimes x}-\\\nonumber
&&+\widehat{ c_0}\theta(1-\theta)\alpha^2|x|^2
- \theta(1-\theta)\alpha^2(C|x_n||x|+\widehat{ c_0}x_n^2)+ o(|x|^2)\\\nonumber 
&\ge&\rho(0)+\left[p_2+\alpha\theta\right] x_n+
\frac12 D^2\rho_\theta (0){x\otimes x}-\\\nonumber
&&
+\frac{\widehat{ c_0}}2\theta(1-\theta)\alpha^2|x|^2
-\theta(1-\theta)\alpha^2\left(\frac{2C^2}{\widehat{c_0}}+\widehat{ c_0}\right)x_n^2+ o(|x|^2), 
\end{eqnarray}
where the last line follows from H\"older's inequality.
Now fixing $\theta_0$ as in  \eqref{blya-2} and using the estimate $|D^2\rho_\theta (0)-D^2\rho_{\theta_0} (0)|\le C|\theta-\theta_0|$
(with $C$ depending on $\phi$)
we obtain 
\begin{eqnarray}\nonumber
\max[\rho_1(x), \rho_2(x)]
&\ge &
\rho(0)+\left[p_2+\alpha\theta_0\right] x_n+
\frac12 D^2\rho_{\theta_0} (0){x\otimes x}+\\\nonumber
&&+\alpha\left[\theta-\theta_0\right] x_n+\frac12 D^2\rho_{\theta} (0){x\otimes x}-\frac12 D^2\rho_{\theta_0} (0){x\otimes x}\\\nonumber
&&+\frac{\widehat{ c_0}}2\theta(1-\theta)\alpha^2|x|^2
-\theta(1-\theta)\alpha^2\left(\frac{2C^2}{\widehat{c_0}}+\widehat{ c_0}\right)x_n^2+ o(|x|^2)\\\nonumber
&\ge &
\rho_{\theta_0} (x)+\alpha\left[\theta-\theta_0\right] x_n+C|\theta_0-\theta||x|^2-\\\nonumber
&&+\frac{\widehat{ c_0}}2\theta(1-\theta)\alpha^2|x|^2
- \alpha^2\left(\frac{2C^2}{\widehat{c_0}}+\widehat{ c_0}\right)x_n^2+ o(|x|^2).\\\nonumber
\end{eqnarray}
Choosing $\theta=\theta_0+x_n\alpha\left(\frac{2C^2}{\widehat{c_0}}+\widehat{ c_0}\right)$ such that 
$|x_n|<\delta$ with sufficiently small $\delta$
we finally obtain 
\begin{eqnarray}\label{Leop-000}
\max[\rho_1(x), \rho_2(x)]
&\ge &\rho_{\theta_0} (x)-C|\theta_0-\theta||x|^2+\frac{\widehat{c_0}}2\theta(1-\theta)\alpha^2|x|^2+ o(|x|^2)=\\\nonumber
&=& 
\rho_{\theta_0} (x)+\frac{\widehat{c_0}}2\theta(1-\theta)\alpha^2|x|^2+ o(|x|^2)\\\nonumber
&\ge& 
\rho_{\theta_0} (x), \quad \forall x\in B_\delta.
\end{eqnarray}

This, in particular, implies that $\rho_{\theta_0}$ is a local supporting ovaloid near $x=0$. 

It remains to check that $\rho_{\theta_0}$ is also a global supporting ovaloid.

\begin{lemma}
There is a unit direction $e$ such that the normal images of the cones form positive angle with 
$e$. Hence there is an $n-$dimensional  plane $\Pi$ such that $\Ov_1, \Ov_2, \Ov_\theta$ are graphs over 
$\Pi$ in $e$ direction.
\end{lemma}
\begin{proof}
By construction of the weak solutions we know that the visibility cones 
of the ovaloids have solid angles $\pi-\tau$ for some 
$\tau>0$, see \eqref{bychkova1}. Suppose that $C_1$ and $C_2$ are the visibility cones of 
$\Ov_1, \Ov_2$,  respectively. The surface $\Ov^*$ given by $\rho=\max[\rho_1, \rho_2]$ is 
contained in $C_1\cap C_2$. Thus it is enough to show that 
the intersection of the ovaloids $\Ov_1, \Ov_2$ is inside $\Ov_\theta$ in 
$C:=C_1\cap C_2$, since outside of the intersection cone the inclusion is obvious. 

The Gauss map of $\Ov^*$ is contained in some hemisphere since both cones $C_1, C_2$
have apertures $<\pi-\tau$ thanks to \eqref{bychkova1}, and hence so does $C$. Moreover, the image of the Gauss map of 
$\Ov_\theta\cap C$ is contained in that of $\Ov^*$ hence there is a
direction $e$ such that $\Ov^*\cap C$ and $\Ov_\theta\cap C$ are graphs in $e$ direction
\end{proof}

From here  we can proceed as in \cite{K-siam}, Lemma 10.1.

The set $\Lambda_{1,2}=\{x\in \Pi, \ :\ \rho_1(x)=\rho_2(x)\}$ passes through $0$ and splits $\U$ into two parts $\U^+$ and $\U^-$
(recall that $\Lambda_{12}$ is a smooth surface).
It follows from \eqref{Leop-000} that the contact sets $\Lambda_{i,\theta_0}=\{x\in \R^n\ :\  \rho_i(x)=\rho_{\theta_0}(x)\}, i=1,2$
are tangent to $\Lambda_{12}$ from one side in some vicinity of $0$, say in $\U^+$. 
If there is $\bar x_0\not =0$ such that, say,  
$\bar x_0\in \Lambda_{1,2}\cap \Lambda_{1, \theta_0}$ then $\rho_1(\bar x_0)=\rho_2(\bar x_0)=\rho_{\theta_0}(\bar x_0)$
and $\na \rho_{\theta_0}(\bar x_0)=\bar \theta_0 \na \rho_1 (\bar x_0)+(1-\bar\theta_0)\na \rho_2(\bar x_0)$ with possibly 
different $\bar\theta_0$. 

Observe that by construction the ray emitted from $\bar X=(\bar x_0, \sqrt{1-|\bar x_0|^2})$  after refraction 
from $\Ov_1, \Ov_2$ and $\Ov_{\theta_0}$ hits the point $Z_1, Z_2$ and $Z_{\theta_0}$, respectively.
Then repeating the argument above with $0$ replaced by $\bar x_0$ and $\theta_0$ by $\bar\theta_0$ (but keeping $\rho_{\theta_0}$ fixed),  we can see that 
\eqref{Leop-000} is satisfied in 
$B_\delta(\bar x_0)$ implying that $ \Lambda_{1, \theta_0}$ is tangent with $ \Lambda_{1, 2}$ at $\bar x_0$ and lies in $\U^+$, see Figure \ref{fig2}.
Thus $\rho_{\theta_0}$ is a global supporting ovaloid.
\end{proof}
Now the proof of $C^{2, \alpha}$ regularity  follows as it was explained in Introduction before the statement of Theorem \ref{thm:thm4}.

\section*{
Acknowledgment
}
The authors were partially supported by Research Grant 2015/19/P/ST1/02618 from the National Science Centre,
Poland, entitled 'Variational Problems in Optical Engineering and Free Material Design'. A.S. would like to thank the University of Edinburgh that hosted him twice in order for the completion of this paper.  A.K. would like to thank the  Banach Center of the Institute of Mathematics of the Polish Academy of Sciences for hospitality. 
}
{\footnotesize This project has received funding from the European Union's Horizon 2020 research and innovation program under the Marie Sk\l{}odowska-Curie grant agreement No. 665778.}\\ \\

\begin{bibdiv}
\begin{biblist}

\bib{Gut-Abe}{article}{
   author={Abedin, Farhan},
   author={Guti\'errez, Cristian E.},
   title={An iterative method for generated Jacobian equations},
   journal={Calc. Var. Partial Differential Equations},
   volume={56},
   date={2017},
   number={4},
   pages={101},
   issn={1432-0835},
   review={\MR{3669141}}
}

\bib{Abe-Gut-Giu}{article}{
   author={Abedin, Farhan},
   author={Guti\'errez, Cristian E.},
   author={Tralli, Giulio},
   title={$C^{1,\alpha}$ estimates for the parallel refractor},
   journal={Nonlinear Anal.},
   volume={142},
   date={2016},
   number={2},
   pages={1--25},
   review={\MR{3508055}},
}

\bib{Jun-Gui}{article}{
   author={Guillen, Nestor},
   author={Kitagawa, Jun},
   title={Pointwise estimates and regularity in geometric optics and other generated Jacobian equations},
   journal={Comm. Pure Appl. Math. },
   volume={70},
   date={2017},
   number={6},
   pages={1146--1220},
   review={\MR{3639322}},
}

\bib{GM}{article}{
   author={Guti\'errez, Cristian E.},
   author={Mawi, Henok},
   title={The refractor problem with loss of energy},
   journal={Nonlinear Anal.},
   volume={82},
   date={2013},
   pages={12--46},
   issn={0362-546X},
   review={\MR{3020894}},
}

\bib{Gut-Hua}{article}{
   author={Guti\'errez, Cristian E.},
   author={Huang, Qingbo},
   title={The near field refractor},
   journal={Ann. Inst. H. Poincar\'e Anal. Non Lin\'eaire},
   volume={31},
   date={2014},
   number={4},
   pages={655--684},
   issn={0294-1449},
   review={\MR{3249808}},
}

\bib{Gut-Hua-2}{article}{
   author={Guti\'errez, Cristian E.},
   author={Huang, Qingbo},
   title={The refractor problem in reshaping light beams},
   journal={Arch. Ration. Mech. Anal.},
   volume={193},
   date={2009},
   number={2},
   pages={413--443},
   issn={1432-0673},
   review={\MR{2525122}},
}

\bib{Gut-Tou}{article}{
   author={Guti\'errez, Cristian E.},
   author={Tournier, Federico},
   title={Regularity for the near field parallel refractor and reflector problems.},
   journal={Calc. Var. Partial Differential Equations},
   volume={54},
   date={2015},
   number={1},
   pages={917--949},
   review={\MR{3385186}},
}

\bib{Gut-Sab:SIAM}{article}{
   author={Guti\'errez, Cristian E.},
   author={Sabra, Ahmad},
   title={Aspherical Lens Design and Imaging},
   journal={SIAM J. Imaging Sci.},
   volume={9},
   date={2016},
   number={1},
   pages={386--411},
   issn={1936-4954},
   review={\MR{3477314}}
}
		
\bib{Gut-Sab:Freeform}{article}{
   author={Guti\'errez, Cristian E.},
   author={Sabra, Ahmad},
   title={Freeform Lens Design for Scattering Data with General Radiant Fields},
   journal={Arch. Ration. Mech. Anal.},
   volume={228},
   date={2018},
   number={2},
   pages={341--399},
   issn={1432-0673},
   review={\MR{3766979}}
}

\bib{Trudinger-D}{article}{
   author={Jiang, Feida},
   author={Trudinger, Neil S.},
   author={Yang, Xiao-Ping},
   title={On the Dirichlet problem for Monge-Amp\`ere type equations},
   journal={Calc. Var. Partial Differential Equations},
   volume={49},
   date={2014},
   number={3-4},
   pages={1223--1236},
   issn={0944-2669},
   review={\MR{3168630}},
   doi={10.1007/s00526-013-0619-3},
}
\bib{Trudinger-D2}{article}{
   author={Jiang, Feida},
   author={Trudinger, Neil S.},
   title={On the Second Boundary Value Problem for Monge?Amp\`ere Type Equations and Geometric Optics},
   journal={Arch. Ration. Mech. Anal.},
   volume={229},
   date={2018},
   number={2},
   pages={547--567},
   issn={1432-0673},
   review={\MR{3803771}},
}

\bib{KW}{article}{
   author={Karakhanyan, Aram},
   author={Wang, Xu-Jia},
   title={On the reflector shape design},
   journal={J. Differential Geom.},
   volume={84},
   date={2010},
   number={3},
   pages={561--610},
   issn={0022-040X},
   review={\MR{2669365}},
}

\bib{K-siam}{article}{
   author={Karakhanyan, Aram L.},
   title={An inverse problem for the refractive surfaces with parallel
   lighting},
   journal={SIAM J. Math. Anal.},
   volume={48},
   date={2016},
   number={1},
   pages={740--784},
   issn={0036-1410},
   review={\MR{3463050}},
}

\bib{Loep}{article}{
   author={Loeper, Gr\'{e}goire},
   title={On the regularity of solutions of optimal transportation problems},
   journal={Acta Math.},
   volume={202},
   date={2009},
   number={2},
   pages={241--283},
   issn={0001-5962},
   review={\MR{2506751}},
   doi={10.1007/s11511-009-0037-8},
}

\bib{Luneburg}{book}{
   author={Luneburg, R. K.},
   title={Mathematical theory of optics},
   series={Foreword by Emil Wolf; supplementary notes by M. Herzberger},
   publisher={University of California Press, Berkeley, Calif.},
   date={1964},
   pages={xxx+448},
   review={\MR{0172589}},
}

\bib{MTW}{article}{
   author={Ma, Xi-Nan},
   author={Trudinger, Neil S.},
   author={Wang, Xu-Jia},
   title={Regularity of potential functions of the optimal transportation
   problem},
   journal={Arch. Ration. Mech. Anal.},
   volume={177},
   date={2005},
   number={2},
   pages={151--183},
   issn={0003-9527},
   review={\MR{2188047}},
}

\bib{Trudinger}{article}{
   author={Trudinger, Neil S.},
   title={On the local theory of prescribed Jacobian equations},
   journal={Discrete Contin. Dyn. Syst.},
   volume={34},
   date={2014},
   number={4},
   pages={1663--1681},
   issn={1078-0947},
   review={\MR{3121636}},
   doi={10.3934/dcds.2014.34.1663},
}

\bib{Trud-Wang-strc}{article}{
   author={Trudinger, Neil S.},
   author={Wang, Xu-Jia},
   title={On strict convexity and continuous differentiability of potential
   functions in optimal transportation},
   journal={Arch. Ration. Mech. Anal.},
   volume={192},
   date={2009},
   number={3},
   pages={403--418},
   issn={0003-9527},
   review={\MR{2505359}},
   doi={10.1007/s00205-008-0147-z},
}

\bib{Wan}{article}{
   author={Wang, Xu-Jia},
   title={On the design of a reflector antenna. II.},
   journal={Calc. Var. Partial Differential Equations},
   volume={20},
   date={2004},
   number={3},
   pages={329--341},
   issn={1432-0835},
   review={\MR{2062947}},
}

\end{biblist}
\end{bibdiv}
\end{document}